\DeclareMathAlphabet{\mathpzc}{OT1}{pzc}{m}{it}
\newtheorem{theorem}[equation]{Theorem}
\newtheorem{thm}[equation]{Theorem}
\newtheorem*{theorem*}{Theorem}
\newtheorem{theorem-definition}[equation]{Theorem-Definition}
\newtheorem{lemma-definition}[equation]{Lemma-Definition}
\newtheorem{definition-prop}[equation]{Proposition-Definition}
\newtheorem{corollary}[equation]{Corollary}
\newtheorem{prop}[equation]{Proposition}
\newtheorem*{prop*}{Proposition}
\newtheorem{lemma}[equation]{Lemma}
\newtheorem{cor}[equation]{Corollary}
\newtheorem{definition}[equation]{Definition}
\newtheorem*{definition*}{Definition}
\newtheorem*{conjecture*}{Conjecture}
\newtheorem*{theoremtag}{Theorem \ref{thm:nonrat}}
\theoremstyle{definition}
\newtheorem{exam}[equation]{Example}
\newtheorem{example}[equation]{Example}
\newtheorem{examples}[equation]{Examples}
\newtheorem*{question*}{Question}
\newtheorem{remark}[equation]{Remark}
\newcommand{\LL}{\ensuremath{\mathbb{L}}}
\newcommand{\N}{\ensuremath{\mathbb{N}}}
\newcommand{\Z}{\ensuremath{\mathbb{Z}}}
\newcommand{\Q}{\ensuremath{\mathbb{Q}}}
\newcommand{\R}{\ensuremath{\mathbb{R}}}
\newcommand{\OO}{\widehat{\mathscr{O}}}
\newcommand{\A}{\ensuremath{\mathbb{A}}}
\newcommand{\cO}{\ensuremath{\mathscr{O}}}
\newcommand{\cX}{\ensuremath{\mathscr{X}}}
\newcommand{\cZ}{\ensuremath{\mathscr{Z}}}
\newcommand{\cU}{\ensuremath{\mathscr{U}}}
\newcommand{\cY}{\ensuremath{\mathscr{Y}}}
\newcommand{\cW}{\ensuremath{\mathscr{W}}}
\newcommand{\cD}{\ensuremath{\mathscr{D}}}
\newcommand{\cE}{\ensuremath{\mathscr{E}}}
\renewcommand{\R}{\ensuremath{\mathbb{R}}}
\renewcommand{\A}{\ensuremath{\mathbb{A}}}
\renewcommand{\cU}{\ensuremath{\mathscr{U}}}
\renewcommand{\cZ}{\ensuremath{\mathscr{Z}}}
\renewcommand{\cY}{\ensuremath{\mathscr{Y}}}
\newcommand{\Spec}{\ensuremath{\mathrm{Spec}\,}}
\newcommand{\red}{\mathrm{MR}}
\newcommand{\Gal}{\mathrm{Gal}}
\newcommand{\Hom}{\mathrm{Hom}}
\newcommand{\llbr}{[\negthinspace[}
\newcommand{\rrbr}{]\negthinspace]}
\newcommand{\llpar}{(\negthinspace(}
\newcommand{\rrpar}{)\negthinspace)}
\newcommand{\Var}{\mathrm{Var}}
\newcommand{\VF}{\mathrm{VF}}
\newcommand{\Vol}{\mathrm{Vol}}
\newcommand{\gal}{\widehat{\mu}}
\newcommand{\gro}{\mathbf{K}}
\newcommand{\Res}{\mathrm{Res}}
\newcommand{\loga}[1]{\mathscr{#1}^{\dagger}}
\newcommand{\pri}{R(\infty)} %ring of Puiseux power series over $k$
\newcommand{\pfi}{K(\infty)} %field of Puiseux Laurent series over $k$
\def\L{\mathbb L}
\def\P{\mathbb P}
\def\wt{\widetilde}
\numberwithin{equation}{subsection}
\title[Motivic nearby fiber and stable rationality]{The motivic nearby fiber and degeneration of stable rationality}
\author[Johannes Nicaise]{Johannes Nicaise}
\address{Imperial College,
Department of Mathematics, South Kensington Campus,
London SW72AZ, UK, and KU Leuven, Department of Mathematics, Celestijnenlaan 200B, 3001 Heverlee, Belgium} \email{j.nicaise@imperial.ac.uk}
\thanks{Johannes Nicaise is supported by the ERC Starting Grant MOTZETA (project 306610) of the European Research Council, and  by long term structural funding (Methusalem
grant) of the Flemish Government.}
\author{Evgeny Shinder}
\address{School of Mathematics and Statistics, University of Sheffield,
Hounsfield Road, S3 7RH, UK, and
National Research University Higher School of Economics, Russian Federation}
\email{e.shinder@sheffield.ac.uk}
\thanks{Evgeny Shinder is partially supported by Laboratory of Mirror Symmetry NRU HSE, RF government grant, ag. N 14.641.31.0001.}
\begin{document}
\begin{abstract}
We prove that stable rationality specializes in regular families whose fibers are integral and have at most ordinary double points as singularities. Our proof is based on motivic specialization techniques and the criterion of Larsen and Lunts for stable rationality in the Grothendieck ring of varieties.
\end{abstract}

\maketitle

\section{Introduction}
Let $k$ be a field of characteristic zero.
An $n$-dimensional $k$-variety $X$ is called rational if $X$ is birational to the projective space $\P^n$, and
stably rational if $X \times \P^m$ is rational for some $m \ge 0$.
It is a natural question, considered recently in particular in \cite{dFF, Voisin, Totaro, Perry},
how rationality and related notions behave in families.

From our perspective the most natural question is that of specialization: if a very general member of a flat family $\cX \to S$ of varieties
has a certain property, does every member of the family have the same property?
Degenerating smooth varieties to cones over singular varieties shows that rationality and stably rationality do not specialize even for terminal singularities \cite{dFF, Totaro, Perry},
thus these questions are meaningful only for smooth families or for some very restricted classes of singularities.

It is known that properties such as ruledness, uniruledness and rational connectedness of varieties specialize in smooth families \cite{Matsusaka, Kollar-book}.
It is also known that rationality specializes in smooth families of $3$-dimensional varieties \cite{Tim, dFF}.
%Thus the most intriguing question seems to be the following: if a very general fiber of a family
%of smooth varieties is rational, does it imply that all fibers are rational?

\medskip

In this paper we study specialization property of \emph{stable rationality} in arbitrary dimension.
One of our main result is the following:

\begin{theoremtag}
Let $k$ be a field of characteristic zero. Let $f\colon \cX \to C$ be a proper flat morphism with $\cX$, $C$ connected smooth $k$-schemes and $\dim(C) = 1$.
If the geometric generic fiber of $f$ is stably rational, then all geometric fibers with at most ordinary double points as singularities have a stably rational irreducible component.
\end{theoremtag}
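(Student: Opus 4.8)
\emph{Plan.} The idea is to evaluate the motivic volume of $X_K$ — the generic fibre of $f$ over the completed local ring at the point $c$ in question — in two different ways, and to compare the outcomes in $K_0(\Var_k)/(\L)$, where the Larsen--Lunts theorem identifies $[Z]\bmod\L$ with the stable birational class of a smooth proper model of $Z$. One computation will come from an explicit strict normal crossings model built by blowing up the ordinary double points; the other from stable rationality of the geometric generic fibre.

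\emph{Set-up.} Base change along $\Spec\bar k\to\Spec k$ preserves all hypotheses and the conclusion, so assume $k$ algebraically closed; we may also assume $f$ has geometrically integral generic fibre (otherwise one argues componentwise), so every fibre of $f$ is connected. Fix a closed point $c\in C$ whose fibre has at most ordinary double points, set $R=\OO_{C,c}\cong k[[t]]$, $K=\mathrm{Frac}(R)$, and let $X_K$ be the generic fibre of $\cX_R:=\cX\times_C\Spec R\to\Spec R$. Then $\cX_R$ is a regular proper flat $R$-model of the smooth proper $K$-variety $X_K$, with special fibre $\cX_c$, and $X_{\bar K}$ is stably rational, being a base change of the stably rational geometric generic fibre of $f$. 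Recall the motivic volume $\Vol\colon K_0(\Var_K)\to K_0(\Var_k)$: a ring homomorphism, invariant under the ramified base changes $K\hookrightarrow k((t^{1/m}))$, which on a smooth proper $K$-variety with good reduction returns the class of the special fibre of a smooth proper model, and which is computed from any regular proper model with strict normal crossings special fibre $\sum_iN_iE_i$ by the motivic nearby fibre formula $\Vol(X_K)=\sum_{\emptyset\neq J}(1-\L)^{|J|-1}[\widetilde{E_J^\circ}]$, the $\widetilde{E_J^\circ}\to E_J^\circ$ being the canonical $\mu_{m_J}$-covers of the open strata with $m_J=\gcd_{j\in J}N_j$.

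\emph{The model and the first computation.} Let $x_1,\dots,x_r$ be the ordinary double points of $\cX_c$ and $\widehat{\cX}\to\cX_R$ the blow-up along $\{x_1,\dots,x_r\}$; it is again regular and proper over $R$. By the formal Morse lemma (characteristic zero), near $x_i$ one has $\cX_R\cong\Spec k[[w_0,\dots,w_n]]$ with $t=w_0^2+\cdots+w_n^2$, where $n$ is the relative dimension; a chart computation then shows that the special fibre of $\widehat{\cX}$ is the strict normal crossings divisor $\widehat{\cX}_k=Y+2\sum_{i=1}^rE_i$, where $Y$ is the strict transform of $\cX_c$ — a resolution of singularities of $\cX_c$ — each $E_i$ is a smooth quadric of dimension $n$, the $E_i$ are pairwise disjoint, and $Y\cap E_i$ is a smooth quadric of dimension $n-1$ meeting $Y$ and $E_i$ transversally. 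Feeding this into the nearby fibre formula, the only non-trivial cover is the double cover over each $E_i$ (of multiplicity $2$), which is the complement of a hyperplane section in a smooth $n$-dimensional quadric; a short inclusion–exclusion then gives
\[
\Vol(X_K)=[Y]+\sum_{i=1}^r\bigl([Q^n_i]-(1+\L)[Q^{n-1}_i]\bigr)
\]
with $Q^n_i,Q^{n-1}_i$ smooth quadrics. Reducing modulo $\L$: smooth quadrics over $\bar k$ are rational, so all the quadric classes become $1$ and cancel, leaving $\Vol(X_K)\equiv[Y]\pmod{\L}$; by Larsen--Lunts this is the stable birational class of $\cX_c$ in $\Z[\mathrm{SB}_k]$. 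Note that for $n\geq 2$, since an ordinary double point on an $n$-fold is unibranch and $\cX_c$ is connected, $\cX_c$ is in fact irreducible. (The cases $n\leq 1$ are classical.)

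\emph{The second computation and the conclusion.} Since $X_{\bar K}$ is stably rational, there is a finite extension $L=k((t^{1/m}))$ of $K$ over which $X_L:=X_K\times_KL$ is stably rational; as $\Vol$ is a ring homomorphism with $\Vol(\Pro^a_L)=[\Pro^a_k]\equiv 1\pmod{\L}$, this forces $\Vol(X_L)\equiv 1\pmod{\L}$, and base-change invariance gives $\Vol(X_K)=\Vol(X_L)$, hence $\Vol(X_K)\equiv 1\pmod{\L}$. Comparing with the first computation, the stable birational class of $\cX_c$ equals $1$ in $\Z[\mathrm{SB}_k]$, i.e. $\cX_c$ is stably rational; for $n\geq 2$ this is (more than) the assertion, and the low-dimensional cases are immediate. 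The two places where the real work lies are: the local analysis of $\widehat{\cX}$ near the $x_i$ — identifying the exceptional quadrics, their multiplicities and the associated double covers precisely enough that only a resolution of $\cX_c$ survives modulo $\L$; and the input on the motivic volume, especially its invariance under ramified base change, which is exactly what makes stable rationality of the \emph{geometric} generic fibre the right hypothesis.
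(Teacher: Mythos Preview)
Your computation is correct for \emph{isolated} ordinary double points, but the paper's Definition~\ref{def:ordinary} is broader: the singular locus is only required to be smooth with smooth quadric projectivized normal cone admitting a section, and it may have positive dimension --- even codimension one, in which case the fiber is reducible. Your construction (finitely many closed points $x_1,\dots,x_r$, the formal Morse lemma $t=w_0^2+\cdots+w_n^2$, and the blow-up at those points) is specific to the isolated case and does not extend as written. In particular, your irreducibility claim for $\cX_c$ when $n\ge 2$, via unibranchness of isolated ODPs, fails in the paper's generality; the phrasing ``have a stably rational irreducible component'' is there precisely because reducible fibers occur, and the paper's proof handles them by a separate scissor-relation argument on the components $Y_1,\dots,Y_r$ of $\cX_c$.

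For the isolated case, your approach and the paper's are close in spirit but differ in execution. The paper (Proposition~\ref{prop:odp}) first makes the degree-two base change $R\hookrightarrow R(2)$ and then blows up the singular locus of the normalised total space to obtain a reduced snc model; this works uniformly for singular loci of any dimension. You instead blow up $\cX_R$ directly and compute the covers $\widetilde{E_i^\circ}$ on the resulting non-reduced snc model. One verbal slip: the exceptional divisor $E_i$ of your blow-up is $\P^n$, not a quadric; it is the compactification of the $\mu_2$-cover $\widetilde{E_i^\circ}$ that is the $n$-dimensional quadric $Q_i^n$ (namely the double cover of $\P^n$ branched along the quadric $Q_i^{n-1}=Y\cap E_i$). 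Your displayed formula for $\Vol(X_K)$ and the reduction modulo $\L$ are nonetheless correct.
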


For a precise definition of what we mean by ordinary double points, see Definition \ref{def:ordinary}; they are not necessarily isolated singularities.
% If $k$ is uncountable and algebraically closed, then we get an equivalent statement by replacing
%``the geometric generic fiber'' by  ``a very general closed fiber.''

\medskip

One consequence of the theorem is that, if $\cX \to S$ is a smooth proper morphism of $k$-varieties, then the set $S_{sr}$ of points in $S$
parameterizing stably rational geometric fibers is a countable union of Zariski closed subsets; see Corollary \ref{cor:stabratlocus} for a more general statement.
 On the other hand, it is known that rationality and stable rationality are not open properties: there exist examples where $S$ is connected and
 $S_{sr}$ is a non-empty strict subset of $S$ \cite{HPT16}.

The theorem also explains why in practice, for a given smooth family of varieties, proving that one member is stably irrational is as hard as proving
that a very general member is stably irrational. This phenomenon can be illustrated on smooth cubic threefolds: they are known to be irrational \cite{CG}
but stable rationality is known neither for very general nor for specific cubics.

\medskip

Degeneration techniques are known to be useful in proving irrationality and stable irrationality. Beauville used a degeneration argument for the intermediate Jacobian
while proving irrationality for Fano threefolds \cite{Beauville-jac}.
Koll\'ar used degeneration to characteristic $p$ in his proof of non-ruledness (and hence, irrationality) of hypersurfaces in $\P^n$ of high degree \cite{Kollar-hypersurfaces}.

Our result on specialization of stable rationality, as well as its proof, have been inspired by the corresponding result on specialization for the universal Chow zero triviality introduced by
Voisin \cite[Theorem 1.1]{Voisin}. The latter specialization result has been
used to solve some long-standing questions about stable irrationality of certain very general cyclic coverings, high degree hypersurfaces in projective spaces
and conic bundles \cite{Voisin, CTP-stab, Beauville-stab, Totaro-stab, HKT-stab}.

When degeneration does not involve characteristic $p$, our approach allows to deduce stable irrationality
directly, without invoking Chow groups; see Theorem \ref{thm:hypers}, Example \ref{ex:qdsolid} for quartic and sextic double solids, where the constructed degeneration has isolated ordinary double points,
and Proposition \ref{thm:quartics} for three-dimensional quartics, where the degeneration has more complicated quadratic singularities.

\medskip

Our proofs of degeneration results for stable rationality rely on the Grothendieck ring of varieties.
 It was shown by Larsen and Lunts that stable rationality of a smooth and proper variety over a field of characteristic zero can be detected on the class of the variety in the Grothendieck ring:
  stable rationality is equivalent with this class being congruent to $1$ modulo the class $\LL$ of the affine line.
 We study two types of specialization maps between Grothendieck rings: Hrushovski and Kazhdan's {\em motivic volume}, which refines the motivic nearby fiber of Denef and Loeser; and the
 {\em motivic reduction}, which is the quotient of the motivic volume by the monodromy action.
 The motivic volume is a $\Z[\L]$-algebra homomorphism whereas the motivic reduction is only a $\Z[\L]$-module homomorphism (see Remark \ref{rem:MRhom} for discussion);
  both morphisms preserve congruences modulo $\L$ and thus can be applied to study stable rationality.

 Explicit formulas for the specialization maps on strict normal crossings degenerations then allow us to establish our specialization results for stable rationality. In order to deal with degenerations to singular fibers, we study singularities with the property that a resolution of singularities does not alter the class in the Grothendieck ring of varieties modulo $\LL$; we call such singularities {\em $\LL$-rational} (Definition \ref{def:Lrat}). Prominent examples are rational surface singularities and, in arbitrary dimension, toric singularities and ordinary double points (Example \ref{exam:Lrat}).

In addition to restricting singularities of the special fiber, in order for our method to apply we typically make an assumption that the total space of the degeneration is $\L$-faithful,
which by definition means that the motivic volume is congruent to the class of the special fiber modulo $\L$. It is not hard to check that nodal degenerations are $\L$-faithful.
For this one constructs their explicit semi-stable model by making a degree two base change followed by a single blow-up of the singular locus of the total space.
We demonstrate how to deal with more complicated singularities in Proposition \ref{prop:quart-model}.

 \medskip

Shortly after the first version of this paper had appeared on the arxiv, Kontsevich and Tschinkel used our method to construct
specialization homomorphisms analogous to the motivic volume and motivic reduction for
a different ring of varieties, which they call the Burnside ring \cite{KT}. This yields a birational version of the motivic nearby fiber.
They used this invariant to prove that rationality and birational type specialize
in smooth and mildly singular families, thus providing an important generalization of our results from
stable rationality to rationality. Even though the main theorem of \cite{KT} is strictly stronger than ours, we believe that our method is still of independent interest.
 Firstly, a typical application of the result in \cite{KT} is to disprove rationality by means of a degeneration argument,
 and this can often be achieved by directly disproving stable rationality, for which our results are sufficient; see for instance the applications in Section \ref{subsec:app}. More importantly, some specific tools are available for the computation of the motivic volume, which do not apply to the birational variant in \cite{KT}. Indeed, it follows from the work of Hrushovski and Kazhdan that the motivic volume is invariant under semi-algebraic bijections, which makes it possible to apply tools from tropical geometry -- see in particular the motivic Fubini theorem in \cite{NiPa}.
  %This is a remarkable property, because semi-algebraic bijections certainly do not preserve the birational type.
   It seems promising to explore the applications of these techniques to rationality questions.

\iffalse %JOH: I propose to remove this part from the introduction, as it is has no implications for the present paper.
\medskip

Similarly to our application of motivic volume to degeneration of
stable rationality, which involves congruences modulo $\L$, one can study
degeneration of varieties of Lefschetz type,
that is, varieties whose class in
the Grothendieck ring is a polynomial in $\L$.
One can compute specializations
of smooth proper varieties of Lefschetz type to mildly singular, e.g. nodal
varieties, in terms of the general fiber and the singularities.
This would yield motivic analogs of results in the classical Picard-Lefschetz theory. We plan to develop this theme in a future work.
\fi

\medskip
We conclude the introduction with a brief overview of the paper. We collect some preliminary definitions and results in Section \ref{sec:prelim}. In Section \ref{sec:vol} we introduce the technology for Grothendieck rings that we will need. In particular, we define the motivic volume and the motivic reduction maps; they are characterized by the properties stated in Theorem \ref{thm:vol} and Proposition \ref{prop:red}, respectively.
We apply these tools to the study of rationality questions in Section \ref{sec:rat}; our main results are Theorem \ref{thm:specbir} and \ref{thm:nonrat}.
We give concrete applications of these results in Section \ref{subsec:app}, see Theorem \ref{thm:hypers}, Example \ref{ex:qdsolid} and Theorem \ref{thm:quartics}.
 Finally, in Appendix \ref{sec:WF}, we give an alternative proof of the existence of the motivic volume and reduction maps, which relies on Weak Factorization instead of motivic integration. We believe that it will make the construction more accessible to algebraic geometers; it also provides a new and useful formula for the motivic volume in terms of log smooth models.

%\subsection{Acknowledgements}

\medskip

The authors thank T.~Bridgeland, I.~Cheltsov, J.-L.~Colliot-Th\'el\`ene, S.~Galkin, J.~Koll\'ar, A.~Kuznetsov, A.~Pirutka, Yu.~Prokhorov, S.~Schreieder, C.~Shramov, and B.~Totaro for discussions, encouragement and e-mail communication.
 The first-named author wishes to thank in particular O.~Wittenberg, with whom he has had several discussions in 2014 on some of the main results in this paper.
  We are grateful to Yu.\,Tschinkel for pointing out an error in a previous version of this paper,
where we mistakenly claimed that the motivic reduction is a ring homomorphism. We would also like to thank the referee for their thoughtful comments and suggestions, which have improved the presentation of the paper. In particular, it was the referee's suggestion to formulate a version of Theorem \ref{thm:nonrat} for reducible fibers.

\section{Preliminaries}\label{sec:prelim}
\subsection{Notation}
Let $k$ be a field of characteristic zero.
 We set $R=k\llbr t\rrbr$ and $K=k\llpar t\rrpar$. For every positive integer $n$, we also write $R(n)=k\llbr t^{1/n}\rrbr$
 and  $K(n)=k\llpar t^{1/n}\rrpar$. Finally, we set
 $$\pri=\bigcup_{n>0} R(n), \quad \pfi=\bigcup_{n>0} K(n).$$
 If $k$ is algebraically closed, then the field $\pfi$ is an algebraic closure of $K$. We write $(\cdot)_k$, $(\cdot)_K$ and $(\cdot)_{\pfi}$ for the base change functors to the categories of $k$-schemes, $K$-schemes and $\pfi$-schemes, respectively.

 For every positive integer $n$, we denote by $\mu_n$ the group scheme of $n$-th roots of unity over $k$.
 We order the positive integers by the divisibility relation and set $\displaystyle \gal=\lim_{\longleftarrow}\mu_n$ where the transition morphism $\mu_{mn}\to \mu_n$ is the $m$-th power map, for all positive integers $m$ and $n$.
 If $k$ contains all roots of unity, then $\mu_n$ and $\gal$ are constant group schemes over $k$, and they are canonically isomorphic to the Galois groups $\Gal(K(n)/K)$ and
 $\Gal(\pfi/K)$, respectively.

\subsection{Constructions on snc-models}
If $X$ is a proper $K$-scheme, then an $R$-model of $X$ is a flat and proper $R$-scheme $\cX$ endowed with an isomorphism $\cX_K\to X$.
 If $X$ is smooth, then we say that $\cX$ is an snc-model for $X$ if $\cX$ is
regular and the special fiber $\cX_k$ is a strict normal crossings divisor (possibly non-reduced).
By Hironaka's resolution of singularities, every $R$-model of $X$ can be dominated by an snc-model. We say that $X$ has {\em semistable reduction} if it has an snc-model with reduced special fiber.

 Let $\cX$ be an snc-model of $X$. We write the special fiber as
$$\cX_k=\sum_{i\in I}N_i E_i$$ where $E_i,\,i\in I$ are the prime components of $\cX_k$ and the $N_i$ are their multiplicities.  For every non-empty subset $J$ of $I$, we set
$$N_J=\gcd\{N_j\,|\,j\in J\}.$$
Moreover, we put
$$E_J=\bigcap_{j\in J}E_j,\qquad E_J^o=E_J\setminus (\bigcup_{i\in I\setminus J}E_i).$$
The set
 $\{E_J^o\,|\,\emptyset\neq J\subset I\}$
is a partition of $\mathscr{X}_k$ into locally closed subsets. We
endow all $E_J$ and $E_J^o$ with their reduced induced subscheme structures.

 Let $J$ be a non-empty subset of $I$, and let
 $$\cX(N_J)\to \cX\times_R R(N_J)$$ be the normalization morphism.
 We denote by $\widetilde{E}^o_J$ the inverse image of $E_J^o$ in $\cX(N_J)_k$. The group scheme $\mu_{N_J}$
 acts on $\cX(N_J)$, and this action turns $\widetilde{E}^o_J$ into a $\mu_{N_J}$-torsor over $E_J^o$ -- see for instance \cite[\S2.3]{Ni-tame}.
  If $m$ is a multiple of $N_J$ and we denote by $\cX(m)$ the normalization of $\cX\times_R R(m)$, then the natural morphism
  $h\colon \cX(m)\to \cX(N_J)$ induces an isomorphism $h^{-1}(\widetilde{E}^o_J)_{\mathrm{red}}\to \widetilde{E}^o_J$, by \cite[3.2.2]{BuNi}. Thus we can use any multiple of $N_J$ to compute the cover $\widetilde{E}_J^o$ of $E_J^o$.
    For every $i\in I$, we write $E_i^o$ and $\widetilde{E}_i^o$ instead
of $E^o_{\{i\}}$ and $\widetilde{E}^o_{\{i\}}$.

\subsection{Grothendieck rings of varieties}
Let $F$ be a field of characteristic zero, and let $G$ be a profinite group scheme over $F$. We say that a quotient group scheme $H$ of $G$ is {\em admissible} if the kernel of
$G(F^a)\to H(F^a)$ is an open subgroup of the profinite group $G(F^a)$, where $F^a$ denotes an algebraic closure of $F$. In particular, $H$ is a finite group scheme over $F$.

 We now define the Grothendieck ring $\gro^{G}(\Var_F)$ of $F$-varieties with $G$-action. As an abelian group, it is characterized by the following presentation:
\begin{itemize}
\item {\em Generators}: isomorphism classes of $F$-schemes $X$ of finite type endowed with a good $G$-action. Here ``good'' means that the action factors through an admissible quotient of $G$ and that we can cover $X$ by $G$-stable affine open subschemes (the latter condition is always satisfied when $X$ is quasi-projective). Isomorphism classes are taken with respect to $G$-equivariant isomorphisms.
\item {\em Relations}: we consider two types of relations.
\begin{enumerate}
\item {\em Scissor relations}: if $X$ is a $F$-scheme of finite type with a good $G$-action and $Y$ is a $G$-stable closed subscheme of $X$, then
$$[X]=[Y]+[X\setminus Y].$$
\item {\em Trivialization of linear actions}: let $X$ be a $F$-scheme of finite type with a good $G$-action, and let $V$ be a $F$-vector scheme of dimension $d$ with a good linear action of $G$.
Then $$[X\times_F V]=[X\times_F \A^d_F]$$ where the $G$-action on $X\times_F V$ is the diagonal action and the action on $\A^d_F$ is trivial.
\end{enumerate}
\end{itemize}
The group $\gro^{G}(\Var_F)$ has a unique ring structure such that $[X]\cdot [X']=[X\times_F X']$ for all $F$-schemes $X$, $X'$ of finite type  with good $G$-action, where the $G$-action
on $X\times_F X'$ is the diagonal action. We write $\LL$ for the class of $\A^1_F$ (with the trivial $G$-action) in the ring $\gro^{G}(\Var_F)$.
 When $G$ is the trivial group, we write $\gro(\Var_F)$ instead of $\gro^{G}(\Var_F)$. When
 $f:\gro^{G}(\Var_F)\to A$ is a function taking values in some set $A$, and $X$ is a
  $F$-scheme of finite type with good $G$-action,  we will usually write $f(X)$ instead of $f([X])$. This applies in particular to the motivic volume and reduction maps that we will construct in Section \ref{sec:vol}.

 If $F'$ is a field extension of $F$, then we have an obvious base change morphism
 $$\gro^{G}(\Var_F)\to \gro^{G}(\Var_{F'}):[X]\mapsto [X\times_F F'].$$
If $G'\to G$ is a continuous morphism of profinite group schemes, then we can also consider the restriction morphism
$$\Res^G_{G'}:\gro^{G}(\Var_F)\to \gro^{G'}(\Var_F).$$
Both of these morphisms are ring homomorphisms.

\subsection{Bittner's theorem and the theorem of Larsen and Lunts}
The structure of the Grothendieck ring of varieties is still quite mysterious, but
 when $F$ is a field of characteristic zero, there are two powerful results that follow from Hironaka's resolution of singularities and the Weak Factorization theorem \cite{WF, Wlod}.
 The first result is due to Bittner \cite[3.1]{bittner} and provides an alternative presentation for the group $\gro(\Var_F)$ that is more convenient for the construction of motivic invariants.

 \begin{theorem}[Bittner]\label{thm:bittner}
Let $F$ be a field of characteristic zero. Then the group $\gro(\Var_F)$ has the following presentation.
\begin{itemize}
\item {\em Generators:} isomorphism classes $[X]$ of smooth and proper $K$-schemes $X$.
\item {\em Relations:} $[\emptyset]=0$, and for every smooth and proper $K$-scheme $X$ and every connected smooth closed subscheme $Y$ of $X$,
$$[\mathrm{Bl}_Y X]-[E]=[X]-[Y]$$
where $\mathrm{Bl}_Y X$ is the blow-up of $X$ along $Y$ and $E$ is the exceptional divisor.
  \end{itemize}
 \end{theorem}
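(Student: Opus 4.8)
The plan is to prove that the group $G$ defined by the stated presentation is canonically isomorphic to $\gro(\Var_F)$. There is an evident homomorphism $\phi\colon G\to\gro(\Var_F)$ sending a generator $[X]$ to the class of $X$ in $\gro(\Var_F)$: the blow-up relation holds there because the scissor relations give $[\mathrm{Bl}_YX]=[E]+[\mathrm{Bl}_YX\setminus E]$ and $[X]=[Y]+[X\setminus Y]$, while $\mathrm{Bl}_YX\setminus E\cong X\setminus Y$. Moreover $\phi$ is surjective: every $F$-scheme of finite type can be stratified into smooth subschemes, and by Nagata compactification together with Hironaka's resolution of singularities every smooth variety embeds as an open subscheme of a smooth proper $F$-scheme whose boundary is a strict normal crossings divisor, so inclusion-exclusion along that divisor expresses any class of $\gro(\Var_F)$ as a $\Z$-combination of classes of smooth proper varieties. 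It therefore suffices to construct a homomorphism $\Psi\colon\gro(\Var_F)\to G$ inverse to $\phi$.

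For a smooth variety $U$, choose a smooth proper $\overline U$ containing $U$ with $D=\overline U\setminus U=\bigcup_{i\in S}D_i$ a strict normal crossings divisor, and set
$$\Psi(U)=\sum_{J\subseteq S}(-1)^{|J|}\,[D_J]\in G,$$
where $D_\emptyset=\overline U$ and $D_J=\bigcap_{j\in J}D_j$, each a smooth proper variety. The first and principal task is to show that $\Psi(U)$ is independent of the chosen compactification. Here I would invoke the Weak Factorization theorem in the form that any two strict normal crossings compactifications of $U$ are connected by a chain of blow-ups and blow-downs along smooth centers that have normal crossings with the boundary. It then remains to check that $\Psi$ is unchanged by a single such blow-up $\mathrm{Bl}_Z\overline U\to\overline U$: one expresses the new boundary strata in terms of the strict transforms of the $D_J$, their intersections with $Z$, and the exceptional divisor, and then collapses the resulting alternating sum using the blow-up relations in $G$ applied to the smooth proper varieties $D_J$ with smooth centers $Z\cap D_J$. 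This combinatorial identity, though purely formal, requires careful bookkeeping of the incidence of $Z$ with the various $D_J$, and I expect it to be the main obstacle in the proof.

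Granting independence of the compactification, the remaining steps are routine. One shows $\Psi$ is additive for decompositions of a smooth variety into a closed subscheme and its open complement: for $Y\subseteq U$ closed with $U$ smooth, one reduces by further stratification to the case that $Y$ is smooth, chooses a single strict normal crossings compactification of $U$ adapted to the closure of $Y$ (using resolution of singularities and, if necessary, an extra blow-up), and matches the alternating sums termwise, obtaining $\Psi(U)=\Psi(Y)+\Psi(U\setminus Y)$. Consequently, for an arbitrary $F$-scheme of finite type $X$ with a stratification $X=\bigsqcup_i S_i$ into smooth subschemes, the assignment $[X]\mapsto\sum_i\Psi(S_i)$ is well defined and respects the scissor relations, hence defines a homomorphism $\Psi\colon\gro(\Var_F)\to G$. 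Finally $\phi\circ\Psi=\mathrm{id}$ follows from the scissor relations in $\gro(\Var_F)$ together with the inclusion-exclusion definition of $\Psi$, and $\Psi\circ\phi=\mathrm{id}$ holds on generators because for $X$ smooth and proper one may take $\overline X=X$, so that $\Psi(X)=[X]$. Therefore $\phi$ is an isomorphism, which is the assertion of the theorem.
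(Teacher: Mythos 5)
The paper does not prove this statement: it quotes it as a known result of Bittner, citing [Bi04, Theorem 3.1], so there is no in-paper proof to compare against. Your proposal is a correct outline of Bittner's original argument --- the map $\phi$ from the blow-up presentation, surjectivity via stratification and snc compactification, the inverse $\Psi(U)=\sum_J(-1)^{|J|}[D_J]$ made well defined by Weak Factorization, and extension by additivity --- and you rightly identify the invariance of $\Psi$ under a single admissible blow-up (the combinatorial bookkeeping of the new boundary strata) as the technical heart, which is exactly where the work lies in Bittner's paper.
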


 The second result is due to Larsen and Lunts.
 Let $X$ and $Y$ be reduced $F$-schemes of finite type. Then $X$ and $Y$ are called {\em birational} if they contain isomorphic dense open subschemes (we do not require $X$ and $Y$ to be irreducible).
 They are called {\em stably birational} if $X \times_F \P_F^\ell$ is birational to $Y \times_F \P_F^m$ for some $\ell, m \ge 0$, and $X$ is called {\em stably
rational} if it is stably birational to the point $\Spec F$.
 We denote by $\mathrm{SB}_F$ the set of stable birational equivalence classes of non-empty connected smooth and proper $F$-schemes, and by $\Z[\mathrm{SB}_F]$ the free $\Z$-module on the set
 $\mathrm{SB}_F$.

\begin{theorem}[Larsen-Lunts]\label{thm:LL}
Let $F$ be a field of characteristic zero. There exists a unique group morphism
$$\mathrm{sb}\colon \gro(\Var_F)\to \Z[\mathrm{SB}_F]$$
 that maps the class $[X]$ of each non-empty connected smooth and proper $F$-scheme $X$
 to the stable birational equivalence class of $X$.
This morphism is surjective, and its kernel is the ideal of $\gro(\Var_F)$ generated by $\LL$.
 \end{theorem}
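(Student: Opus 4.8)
The plan is to build the map from Bittner's presentation (Theorem \ref{thm:bittner}) and to identify its kernel using Weak Factorization.

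\emph{Construction and uniqueness.} A smooth and proper $F$-scheme $X$ is a finite disjoint union of connected smooth proper $F$-schemes $X_1,\dots,X_r$; I would define $\mathrm{sb}$ on Bittner's generators by $[X]\mapsto\sum_{i=1}^r\langle X_i\rangle$, where $\langle Z\rangle\in\mathrm{SB}_F$ denotes the stable birational class of a connected smooth proper $Z$. To see that this descends to $\gro(\Var_F)$ I must check Bittner's relations: $[\emptyset]\mapsto 0$ is clear, and for a connected smooth closed $Y\subset X$ of codimension $c$ the blow-up $\mathrm{Bl}_Y X\to X$ is a birational morphism on each component, so $\mathrm{sb}(\mathrm{Bl}_Y X)=\mathrm{sb}(X)$, while the exceptional divisor $E=\P(N_{Y/X})$ is a Zariski-locally trivial $\P^{c-1}$-bundle over $Y$, hence birational to $Y\times_F\P^{c-1}_F$ and so stably birational to $Y$, giving $\mathrm{sb}(E)=\langle Y\rangle=\mathrm{sb}(Y)$; thus $\mathrm{sb}(\mathrm{Bl}_Y X)-\mathrm{sb}(E)=\mathrm{sb}(X)-\mathrm{sb}(Y)$. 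By Theorem \ref{thm:bittner} this produces a well-defined group homomorphism $\mathrm{sb}$ with the required values, and it is unique because the classes of connected smooth proper $F$-schemes generate $\gro(\Var_F)$ as a group (again by Theorem \ref{thm:bittner}, or directly via resolution of singularities and the scissor relations).

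\emph{Surjectivity and the inclusion $(\LL)\subseteq\ker(\mathrm{sb})$.} Every basis element of $\Z[\mathrm{SB}_F]$ is by definition $\langle X\rangle=\mathrm{sb}([X])$ for some non-empty connected smooth proper $X$, so $\mathrm{sb}$ is surjective. Since $\gro(\Var_F)$ is generated as a group by classes $[W]$ of connected smooth proper $F$-schemes, the ideal $(\LL)$ is generated as a group by the elements $\LL\cdot[W]=[\A^1_F\times W]$; from the scissor relation $[\P^1_F\times W]=[\A^1_F\times W]+[W]$ and the fact that $\P^1_F\times W$ is connected, smooth, proper and stably birational to $W$, we get $\mathrm{sb}(\LL\cdot[W])=\langle W\rangle-\langle W\rangle=0$. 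Hence $(\LL)\subseteq\ker(\mathrm{sb})$, and $\mathrm{sb}$ induces a surjection $\overline{\mathrm{sb}}\colon\gro(\Var_F)/(\LL)\to\Z[\mathrm{SB}_F]$.

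\emph{The opposite inclusion.} I would construct a two-sided inverse of $\overline{\mathrm{sb}}$. Define $\varphi\colon\Z[\mathrm{SB}_F]\to\gro(\Var_F)/(\LL)$ on the basis by $\langle X\rangle\mapsto[X]\bmod(\LL)$. The crucial point is that $\varphi$ is well defined, and this is where Weak Factorization enters: any birational map between connected smooth proper $F$-schemes factors into blow-ups and blow-downs along connected smooth centers, and each blow-up along such a center $Y$ of codimension $c$ changes the class only by $[E]-[Y]=[Y]\cdot([\P^{c-1}_F]-1)=[Y]\cdot(\LL+\cdots+\LL^{c-1})\in(\LL)$, using that $E\to Y$ is a Zariski-locally trivial projective bundle so $[E]=[Y]\cdot[\P^{c-1}_F]$. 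Thus birational connected smooth proper $F$-schemes have equal class modulo $(\LL)$, and since $[X\times_F\P^m_F]=[X]\cdot[\P^m_F]\equiv[X]\bmod(\LL)$, the same holds for stably birational ones; hence $\varphi$ is well defined. Now $\overline{\mathrm{sb}}\circ\varphi$ is the identity on each basis element $\langle X\rangle$, hence on all of $\Z[\mathrm{SB}_F]$, and $\varphi\circ\overline{\mathrm{sb}}$ fixes each generator $[X]\bmod(\LL)$ of $\gro(\Var_F)/(\LL)$, hence is the identity; therefore $\overline{\mathrm{sb}}$ is an isomorphism and $\ker(\mathrm{sb})=(\LL)$.

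\emph{Main obstacle.} The only substantial step is the well-definedness of $\varphi$ — that passing to a resolution, or to a stably birational model, of a smooth proper variety leaves its class unchanged modulo $\LL$. This rests on Weak Factorization and is the reason the characteristic-zero hypothesis cannot be dropped; the remainder is formal manipulation of the two presentations of $\gro(\Var_F)$.
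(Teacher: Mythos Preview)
The paper does not actually prove Theorem \ref{thm:LL}; it is quoted as a known result of Larsen and Lunts (cited as [LL03]) and used as input for the rest of the paper. So there is no ``paper's own proof'' to compare against.

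That said, your argument is correct and is the standard modern proof, essentially the one given by Bittner when she introduced her presentation. The only points worth tightening are minor: in the verification of the blow-up relation you should note that when $c=1$ the blow-up is an isomorphism and $E=Y$, so the relation is trivial; and in the construction of $\varphi$ you use $[E]=[Y]\cdot[\P^{c-1}_F]$, which is justified because projectivizations of vector bundles are Zariski-locally trivial and hence satisfy this product formula in $\gro(\Var_F)$ by a stratification argument. Both are routine, and the structure of your proof---build $\mathrm{sb}$ via Theorem \ref{thm:bittner}, then build the inverse $\varphi$ on $\gro(\Var_F)/(\LL)$ via Weak Factorization---is exactly right.

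For context, the original argument of Larsen and Lunts predates (or is contemporaneous with) Bittner's presentation and proceeds somewhat differently: they construct the map as a ring homomorphism into the monoid ring $\Z[\mathrm{SB}_F]$ (with multiplication coming from products of varieties) and identify the kernel by a direct argument. Your route via Bittner is cleaner and makes the role of Weak Factorization completely transparent.
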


\begin{corollary}\label{cor:LL}
Let $X$ and $X'$ be smooth and proper schemes over a field $F$ of characteristic zero.
 Then $X$ and $X'$ are stably birational if and only if their classes $[X]$ and $[X']$ in $\gro(\Var_F)$ are congruent modulo $\LL$.

In particular,
 $[X]$  is congruent to an integer $c$ modulo $\LL$ if and only if each of its connected components is stably rational; in that case, $c$ is the number of connected components of $X$.
\end{corollary}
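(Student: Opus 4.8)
The plan is to derive both assertions from the Larsen--Lunts morphism $\mathrm{sb}$ of Theorem~\ref{thm:LL}, exploiting that its kernel is the ideal $(\LL)\subset\gro(\Var_F)$ and that it is additive on disjoint unions. First I would rephrase the congruence: by definition $[X]$ and $[X']$ are congruent modulo $\LL$ precisely when $[X]-[X']$ lies in the ideal generated by $\LL$, which by Theorem~\ref{thm:LL} is exactly $\ker(\mathrm{sb})$, so the congruence is equivalent to $\mathrm{sb}(X)=\mathrm{sb}(X')$. Next I would compute $\mathrm{sb}(X)$ for a smooth and proper $F$-scheme $X$: writing $X=\coprod_i X_i$ for the decomposition into connected components (each $X_i$ is irreducible, being connected and smooth over $F$, hence a legitimate generator in the Larsen--Lunts presentation), the scissor relations give $[X]=\sum_i[X_i]$, and additivity of $\mathrm{sb}$ yields $\mathrm{sb}(X)=\sum_i\mathrm{sb}(X_i)$, a sum of \emph{basis} elements of $\Z[\mathrm{SB}_F]$ each occurring with coefficient $1$. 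Consequently $\mathrm{sb}(X)=\mathrm{sb}(X')$ holds if and only if the multisets $\{\mathrm{sb}(X_i)\}_i$ and $\{\mathrm{sb}(X'_j)\}_j$ coincide, i.e.\ there is a bijection $\sigma$ between the connected components of $X$ and those of $X'$ with $X_i$ stably birational to $X'_{\sigma(i)}$ for every $i$.

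It then remains to match this componentwise condition with the paper's definition of stable birationality for the (possibly reducible) schemes $X$ and $X'$. If such a $\sigma$ exists, choose integers $\ell_i,m_i$ with $X_i\times_F\P^{\ell_i}_F$ birational to $X'_{\sigma(i)}\times_F\P^{m_i}_F$; since birationality of two schemes is preserved upon multiplying both by $\P^1_F$, I can enlarge the exponents so that $\ell_i=\ell$ and $m_i=m$ are independent of $i$, and then an isomorphism of dense open subschemes $X\times_F\P^\ell_F\dashrightarrow X'\times_F\P^m_F$ is assembled componentwise. Conversely, an isomorphism between dense open subschemes $U$ and $V$ of $X\times_F\P^\ell_F$ and $X'\times_F\P^m_F$ restricts to a bijection of their connected components — $U$ meets every (irreducible) component $C$, and $U\cap C$ is open in the irreducible scheme $C$, hence irreducible — and this bijection yields the desired $\sigma$. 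This establishes the first assertion.

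For the "in particular" statement, apply $\mathrm{sb}$ to the congruence $[X]\equiv c\pmod{\LL}$: since $\mathrm{sb}$ is a group morphism vanishing on $(\LL)$, we get $\mathrm{sb}(X)=c\cdot\mathrm{sb}(\Spec F)$, i.e.\ $c$ times the stable birational class of the point. Comparing with $\mathrm{sb}(X)=\sum_i\mathrm{sb}(X_i)$ in the free module $\Z[\mathrm{SB}_F]$ — a nonnegative combination of basis elements — the equality forces $\mathrm{sb}(X_i)=\mathrm{sb}(\Spec F)$ for every $i$, that is, each connected component of $X$ is stably rational, and it forces $c$ to be the number of components. The converse is immediate: if every component is stably rational then $\mathrm{sb}(X)=\mathrm{sb}\big(\#\{\text{components}\}\big)$, whence $[X]-\#\{\text{components}\}\in\ker(\mathrm{sb})=(\LL)$. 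The only delicate point in the whole argument is the bookkeeping for disconnected schemes in the second paragraph — in particular synchronising the projective-space dimensions across components so that a single birational equivalence $X\times_F\P^\ell_F\dashrightarrow X'\times_F\P^m_F$ can be produced — while the substance of the corollary is a direct consequence of Theorem~\ref{thm:LL}.
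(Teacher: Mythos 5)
Your overall route is the same as the paper's: the printed proof is a one-liner (write the class as the sum of the classes of the connected components via the scissor relations, then apply Theorem \ref{thm:LL}), and everything you do with $\mathrm{sb}$ --- the multiset characterization of $\mathrm{sb}(X)=\mathrm{sb}(X')$, the passage from a birational map of disconnected schemes to a bijection of irreducible components, and the entire ``in particular'' clause --- is a correct and careful expansion of that one line.

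The one step that fails is the synchronization of exponents, which you yourself single out as the delicate point. If $X_i\times_F\P^{\ell_i}_F$ is birational to $X'_{\sigma(i)}\times_F\P^{m_i}_F$, then multiplying both sides by $\P^a_F$ replaces $(\ell_i,m_i)$ by $(\ell_i+a,m_i+a)$ and hence preserves the difference $\ell_i-m_i=\dim X'_{\sigma(i)}-\dim X_i$; a common pair $(\ell,m)$ therefore exists only if this dimension shift is independent of $i$, and no alternative choice of $\sigma$ rescues this in general. Concretely, take $X=\P^1\sqcup\P^1$ and $X'=\P^1\sqcup\P^2$: both classes are congruent to $2$ modulo $\LL$, but $X\times_F\P^\ell_F$ has two irreducible components of the same dimension $\ell+1$ while $X'\times_F\P^m_F$ has components of dimensions $m+1$ and $m+2$, so they never contain isomorphic dense open subschemes. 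Thus, with the literal definition of ``stably birational'' for reducible schemes given in the paper (a single pair $(\ell,m)$ for the whole scheme), the implication ``congruent mod $\LL$ $\Rightarrow$ stably birational'' is false for disconnected schemes. The statement you actually establish --- $[X]\equiv[X']\bmod\LL$ if and only if the connected components can be matched bijectively into stably birational pairs --- is the correct content of the corollary and is what is used elsewhere in the paper (e.g.\ in Theorems \ref{thm:specbir} and \ref{thm:nonrat}); but the reduction of the literal definition to this componentwise one is not bookkeeping, it is exactly where the argument breaks, and it should be handled either by restating the corollary componentwise or by flagging the definition of stable birationality for disconnected schemes. For connected $X$ and $X'$, and for the ``in particular'' clause (where the target is a point and each component is compared with $\Spec F$ separately), there is no issue and your proof is complete.
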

\begin{proof}
Using the scissor relations in the Grothendieck ring, we can write the class of any $F$-scheme of finite type as the sum of the classes of its connected components. Now the statement follows immediately from Theorem \ref{thm:LL}.
\end{proof}

%\begin{proof}
%Denote by $\mathrm{SB}_F$ the set of stably birational equivalence classes of connected smooth and proper $F$-schemes.
% We denote the equivalence class of a connected smooth and proper $F$-scheme $Y$ by $\mathrm{sb}(Y)$.
%Then Larsen and Lunts have proven in \cite{LL} that there exists a unique group morphism
%$$\mathrm{sb}:\gro(\Var_F)/(\LL)\to \Z[\mathrm{SB}_F]$$ that maps $[Y]$ to $\mathrm{sb}(Y)$ for every connected smooth and proper $F$-scheme $Y$, and they have shown that this is an isomorphism. This isomorphism sends $1$ (the class of $\Spec F$) to the class of stably rational varieties. If $X_1,\ldots,X_r$ are the connected components of $X$, then $\mathrm{sb}([X])=\mathrm{sb}(X_1)+\ldots + \mathrm{sb}(X_r)$. Thus $[X]$ is congruent to an integer modulo $\LL$ if and only if each component $X_i$ is stably rational.
%\end{proof}

\section{Motivic volume and motivic reduction}\label{sec:vol}
In this section, we construct two specialization morphisms for Grothendieck rings of varieties: the motivic volume and the motivic reduction. In combination with the theorem of Larsen and Lunts, they will allow us to control the specialization of stable rationality in families.
\subsection{The motivic nearby fiber and the motivic volume}

The following result is heavily inspired by Denef and Loeser's motivic nearby fiber \cite{DL} and Hrushovski and Kazhdan's theory of motivic integration \cite{HK}; the precise relations will be explained below.

\begin{thm}\label{thm:vol}
There exists a unique ring morphism
$$\Vol_{K}:\gro(\Var_{K})\to \gro^{\gal}(\Var_k)$$ such that, for every smooth and proper $K$-scheme
$X$ and every snc-model $\cX$ of $X$, with
$\cX_k=\sum_{i\in I}N_i E_i,$
we have \begin{equation}\label{eq:volX}
\Vol_{K}(X)=\sum_{\emptyset \neq J\subset I}(1-\LL)^{|J|-1}[\widetilde{E}_J^o].
 \end{equation}
In particular, if $\cX$ is a smooth $R$-model of $X$, then
$\Vol_K(X)=[\cX_k]$ where $\gal$ acts trivially on $\cX_k$. The morphism $\Vol_K$ maps $\LL$ to $\LL$.

 Moreover, for every positive integer $n$ and every $K$-scheme $Y$ of finite type, $\Vol_{K(n)}(Y\times_K K(n))$ is the image of $\Vol_{K}(Y)$ under the ring morphism
$$\Res^{\gal}_{\gal(n)}:\gro^{\gal}(\Var_k)\to \gro^{\gal(n)}(\Var_k)$$
defined by restricting the $\gal$-action to the kernel $\gal(n)$ of the projection morphism $\gal\to \mu_n$.
\end{thm}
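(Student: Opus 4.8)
The plan is to obtain $\Vol_K$ from Hrushovski and Kazhdan's theory of motivic integration, and to reduce every asserted property --- via Bittner's theorem --- to an explicit computation on snc-models. For \emph{uniqueness}: by Hironaka's resolution of singularities every smooth and proper $K$-scheme admits an snc-model, and by Bittner's theorem (Theorem~\ref{thm:bittner}) the classes of smooth and proper $K$-schemes generate the group $\gro(\Var_K)$; hence formula~\eqref{eq:volX} determines $\Vol_K$ uniquely, once it is known to be well defined. The assertion for a smooth $R$-model $\cX$ is the case $|I|=1$, $N_{i_0}=1$ of~\eqref{eq:volX}, in which the right-hand side reduces to the single term $[\widetilde{E}^o_{i_0}]=[\cX_k]$ with $\gal$ acting through $\mu_1$.

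For \emph{existence}, I would invoke the motivic volume of Hrushovski and Kazhdan \cite{HK}, in the form of \cite{NiPa}: a ring homomorphism from the Grothendieck ring $\gro(\VF_K)$ of semi-algebraic subsets of $K$-varieties to $\gro^{\gal}(\Var_k)$, where the profinite group $\gal\cong\Gal(\pfi/K)$ records the monodromy action on $\pfi$-points. Composing with the natural ring morphism $\gro(\Var_K)\to\gro(\VF_K)$ (sending a $K$-variety to the semi-algebraic set of its $K$-rational points, with its Galois action) produces a ring homomorphism $\Vol_K$. That $\Vol_K(\LL)=\LL$ is clear from the smooth $R$-model $\A^1_R$ of $\A^1_K$, and also from~\eqref{eq:volX}. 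An alternative, purely geometric construction replacing motivic integration by Weak Factorization --- which yields an independent proof of~\eqref{eq:volX} --- is given in Appendix~\ref{sec:WF}.

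The real content is the verification of~\eqref{eq:volX}. Given a smooth and proper $X$ with snc-model $\cX$, the valuative criterion of properness identifies the $\pfi$-points of $X$ with the $\pri$-points of $\cX$, and the reduction map partitions this set according to the stratum $E^o_J$ into which a point specialises. Over a point of $E^o_J$ the model $\cX$ is \'etale-locally monomial, of the form $\Spec R[x_1,\dots,x_d]/(u\,x_1^{m_1}\cdots x_r^{m_r}-t)$ with $r=|J|$, with $u$ a unit, and with $m_1,\dots,m_r$ the multiplicities of the components passing through the point; an explicit computation of the Hrushovski--Kazhdan volume of the associated semi-algebraic set identifies the contribution of this stratum with $(1-\LL)^{|J|-1}[\widetilde{E}^o_J]$ --- the factor $(1-\LL)^{|J|-1}$ reflecting the $(|J|-1)$-dimensional family of valuations of $x_1,\dots,x_r$ with $\sum m_i\,\ord(x_i)=\ord(t)$, and the cover $\widetilde{E}^o_J$ being exactly the $\mu_{N_J}$-torsor over $E^o_J$ that trivialises the monomial after the base change $R\to R(N_J)$. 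Summing over all non-empty $J\subseteq I$ gives~\eqref{eq:volX}. Equivalently one can cite the identification of this volume with the Denef--Loeser motivic nearby fibre \cite{DL}, for which~\eqref{eq:volX} is the standard snc formula. I expect this step --- matching the model-theoretic invariant, including the $\gal$-action encoded by the covers $\widetilde{E}^o_J$, with the geometry of an explicit resolution --- to be the principal obstacle.

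For the final assertion, both $Y\mapsto\Vol_{K(n)}(Y\times_K K(n))$ and $Y\mapsto\Res^{\gal}_{\gal(n)}\Vol_K(Y)$ are ring homomorphisms $\gro(\Var_K)\to\gro^{\gal(n)}(\Var_k)$, so by Bittner's theorem it suffices to treat $Y=X$ smooth and proper. This is built into the Hrushovski--Kazhdan formalism: enlarging $K$ to $K(n)$ corresponds to restricting the monodromy group from $\Gal(\pfi/K)=\gal$ to $\Gal(\pfi/K(n))=\gal(n)$, and the motivic volume is functorial for this operation. Alternatively it can be checked on snc-models: $\cX\times_R R(n)$ is a model of $X_{K(n)}$, a toric resolution of it is an snc-model over $R(n)$, and inserting this model into~\eqref{eq:volX} returns $\Res^{\gal}_{\gal(n)}\Vol_K(X)$ after a combinatorial identity collecting the contributions of the cones of the subdivision.
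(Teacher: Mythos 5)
Your proposal follows the route that the paper only sketches in the remark \emph{after} the proof of Theorem \ref{thm:vol}, namely deducing existence from Hrushovski--Kazhdan's motivic volume composed with $\gro(\Var_K)\to\gro(\VF_K)$, with formula \eqref{eq:volX} imported from the semi-algebraic computation on an snc-model. The proof the paper actually adopts is different: it takes \eqref{eq:volX} as the \emph{definition} of $\Vol_K$ on the Bittner generators and uses Weak Factorization (organized through log smooth models in Appendix \ref{sec:WF}) to show independence of the snc-model, the blow-up relations, multiplicativity, and the base-change compatibility. Your uniqueness argument and your reduction of the last assertion to smooth proper $X$ via Bittner's theorem coincide with the paper's. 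What the HK route buys is invariance of $\Vol_K$ under semi-algebraic bijections; what the Weak Factorization route buys is a self-contained geometric proof valid over any residue field of characteristic zero.

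That last point is the one substantive gap in your argument. The Hrushovski--Kazhdan comparison results you lean on --- the identification of Grothendieck rings as in \cite[4.3.1]{HL} and \cite[2.7]{forey}, and the snc computation \cite[2.6.1]{NiPa} --- are established in the literature only when $k$ contains all roots of unity (the underlying model theory is carried out over algebraically closed valued fields, and $\gal\cong\Gal(\pfi/K)$ only as a constant group scheme under that hypothesis). The paper states explicitly that extending this to an arbitrary field $k$ of characteristic zero requires additional descent arguments, and this is precisely why the authors make the Weak Factorization construction the official proof. If you intend the HK route as a complete proof in the stated generality, you must supply that descent. Two minor points: the morphism $\gro(\Var_K)\to\gro(\VF_K)$ sends $[X]$ to the class of $X(K^a)$ viewed as a semi-algebraic set over $K$, not to $X(K)$; and for a smooth $R$-model the special fiber may be disconnected, so the reduction of \eqref{eq:volX} to $[\cX_k]$ uses that the strata $E_J^o$ with $|J|\geq 2$ are empty rather than $|I|=1$.
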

\begin{proof}
Uniqueness follows from the fact that the classes of smooth and proper $K$-schemes generate the group $\gro(\Var_K)$, by resolution of singularities. If $\Vol_K$ exists, then it maps $[\mathbb{P}^1_K]$ to $[\mathbb{P}^1_k]$ and $[\Spec K]$ to $[\Spec k]$, and thus $[\A^1_K]$ to $[\A^1_k]$ by additivity.

 One can deduce the existence of $\Vol_K$ from Bittner's presentation of the Grothendieck group (Theorem \ref{thm:bittner}).
 One uses the formula \eqref{eq:volX} as the definition of the motivic volume and applies Weak Factorization to prove that it is independent of the chosen snc-model and satisfies the desired properties. This argument will be explained in detail in Appendix  \ref{sec:WF}; see Theorem \ref{thm:vol2}.
\end{proof}
\begin{remark}
If $k$ contains all roots of unity, then Theorem \ref{thm:vol} follows from Hrushovski and Kazhdan's construction of the motivic volume $$\gro(\VF_{K})\to \gro^{\gal}(\Var_k).$$
 Here $\gro(\VF_{K})$ is the Grothendieck ring of semi-algebraic sets over $K$.
 Hrushovski and Kazhdan's construction in \cite{HK} is based on model theory of algebraically closed fields; a geometric translation can be found in \cite[2.5.1]{NiPa}. There is a natural morphism
 $$\gro(\Var_K)\to \gro(\VF_{K}):[X]\mapsto [X(K^a)]$$ obtained by viewing $X(K^a)$ as a semi-algebraic set defined over $K$, where $K^a$ denotes an algebraic closure of $K$.
 Composing these morphisms, we obtain a ring morphism
 $$\Vol_K:\gro(\Var_{K})\to \gro^{\gal}(\Var_k).$$ The formula \eqref{eq:volX} for $\Vol_{K}(X)$ is proven in \cite[2.6.1]{NiPa}, and the compatibility with the extensions $K(n)/K$ follows at once from the characterization of Hrushovski and Kazhdan's motivic volume in \cite[2.5.1]{NiPa} (see also the discussion at the end of \S2.5 in \cite{NiPa}). This argument can be generalized to the case where $k$ is an arbitrary field of characteristic zero, but this requires  some additional descent arguments to extend the comparison statement for Grothendieck rings in \cite[4.3.1]{HL} and \cite[2.7]{forey}, and the calculation on a strict normal crossings model in \cite[2.6.1]{NiPa}.
 \end{remark}

\begin{cor}\label{cor:vol}
 There exists a unique ring morphism
$$\Vol:\gro(\Var_{\pfi})\to \gro(\Var_k)$$ such that, for every positive integer $n$, every smooth and proper $K(n)$-scheme $X$ and every snc-model $\cX$ of $X$ over $R(n)$, with
$\cX_k=\sum_{i\in I}N_i E_i,$
we have $$\Vol(X\times_{K(n)}\pfi)=\sum_{\emptyset \neq J\subset I}(1-\LL)^{|J|-1}[\widetilde{E}_J^o].$$ This morphism maps $\LL$ to $\LL$.
\end{cor}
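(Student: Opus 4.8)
The plan is to realize $\gro(\Var_{\pfi})$ as the filtered colimit $\varinjlim_n \gro(\Var_{K(n)})$, where the positive integers are ordered by divisibility and the transition maps are the base-change ring homomorphisms, and then to define $\Vol$ by composing each $\Vol_{K(n)}$ with the homomorphism that forgets the $\gal$-action. To justify the colimit description I would invoke standard limit/spreading-out arguments, using that $\pfi=\varinjlim_n K(n)$ is a filtered colimit of fields: every $\pfi$-scheme of finite type is the base change of a $K(n)$-scheme of finite type for some $n$; every morphism between such schemes -- in particular every isomorphism and every closed immersion -- is already defined over some $K(m)$; and any identity among classes that holds in $\gro(\Var_{\pfi})$, being a consequence of finitely many scissor relations, each involving finitely many schemes and closed immersions, already holds after base change to a common $K(m)$. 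Hence the natural map $\varinjlim_n \gro(\Var_{K(n)})\to \gro(\Var_{\pfi})$ is a ring isomorphism.

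Next I would set, for each $n$,
$$\overline{\Vol}_{K(n)}=\Res^{\gal}_{1}\circ \Vol_{K(n)}\colon \gro(\Var_{K(n)})\longrightarrow \gro(\Var_k),$$
where $\Vol_{K(n)}$ is Theorem \ref{thm:vol} applied with uniformizer $t^{1/n}$ (so that $R(n)$ and $K(n)$ play the roles of $R$ and $K$), and $\Res^{\gal}_{1}$ is the restriction homomorphism to the trivial group, for which $\gro^{1}(\Var_k)=\gro(\Var_k)$ (the trivialization-of-linear-actions relation being automatic for the trivial group). These are ring homomorphisms. To glue them I would verify compatibility with the transition maps: for a multiple $nm$ of $n$ and a $K(n)$-scheme $Y$ of finite type, the compatibility statement in Theorem \ref{thm:vol}, applied with base field $K(n)$ and the degree-$m$ extension $K(n)(m)=K(nm)$, gives
$$\Vol_{K(nm)}(Y\times_{K(n)}K(nm))=\Res^{\gal}_{\gal(m)}\bigl(\Vol_{K(n)}(Y)\bigr),$$
and applying $\Res^{\gal(m)}_{1}$ to both sides and using $\Res^{\gal}_{1}=\Res^{\gal(m)}_{1}\circ \Res^{\gal}_{\gal(m)}$ gives $\overline{\Vol}_{K(nm)}(Y\times_{K(n)}K(nm))=\overline{\Vol}_{K(n)}(Y)$. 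Thus the $\overline{\Vol}_{K(n)}$ induce a ring homomorphism $\Vol\colon \gro(\Var_{\pfi})\to \gro(\Var_k)$.

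Finally I would record the formula and uniqueness. If $X$ is smooth and proper over $K(n)$ with an snc-model $\cX$ over $R(n)$ and $\cX_k=\sum_{i\in I}N_iE_i$, then Theorem \ref{thm:vol} over $K(n)$ gives $\Vol_{K(n)}(X)=\sum_{\emptyset\neq J\subset I}(1-\LL)^{|J|-1}[\widetilde{E}^o_J]$ in $\gro^{\gal}(\Var_k)$, and forgetting the action yields the asserted formula for $\Vol(X\times_{K(n)}\pfi)$; the identity $\Vol(\LL)=\LL$ follows since $\Vol_{K(n)}(\LL)=\LL$ with trivial action. By resolution of singularities each $\gro(\Var_{K(n)})$ is generated by classes of smooth and proper $K(n)$-schemes, so by the colimit description the classes $[X\times_{K(n)}\pfi]$ with $X$ smooth and proper over some $K(n)$ generate $\gro(\Var_{\pfi})$, and the formula therefore determines $\Vol$ uniquely. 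The only step with real content is the colimit identification of the first paragraph; it is routine descent, but one has to take care that the geometric data (schemes, isomorphisms, closed immersions) and the finitely many relations witnessing a given identity all descend to one common finite level $K(m)$.
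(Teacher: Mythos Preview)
Your proof is correct and follows essentially the same approach as the paper: the paper's proof consists of the single observation that $\gro(\Var_{\pfi})$ is the direct limit of the $\gro(\Var_{K(n)})$ (citing \cite[3.4]{NiSe-K0} for this), and then says the result follows from Theorem \ref{thm:vol}. You have simply unpacked both steps in detail, supplying the spreading-out argument for the colimit and the compatibility check via $\Res^{\gal}_{1}$ that the paper leaves implicit.
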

\begin{proof}
The Grothendieck ring $\gro(\Var_{\pfi})$ is the direct limit of the Grothendieck rings $\gro(\Var_{K(n)})$, by \cite[3.4]{NiSe-K0}.
Thus the result follows from Theorem \ref{thm:vol}.
\end{proof}

We will call the morphisms $\Vol_K$ and $\Vol$ from Theorem \ref{thm:vol} and Corollary \ref{cor:vol} the {\em motivic volume} maps.
 They are closely related to the motivic nearby fiber that was introduced by Denef and Loeser \cite{DL}. Let $Z$ be a smooth $k$-variety and let
  $$f:Z\to \A^1_k=\Spec k[t]$$ be a dominant morphism. We denote by $Z_0$ the zero locus of $f$.
The motivic nearby fiber $\psi^{\mathrm{mot}}_f$ is an element of the Grothendieck ring $\gro^{\gal}(\Var_{Z_0})[\LL^{-1}]$
of varieties over $Z_0$ with good $\gal$-action, localized with respect to $\LL$. It should be viewed as the motivic incarnation of the complex of
nearby cycles of $f$. In \cite[3.5.3]{DL}, Denef and Loeser have given an explicit formula
for $\psi^{\mathrm{mot}}_f$ in terms of a log resolution for the pair $(Z,Z_0)$. Comparing this formula to equation \eqref{eq:volX}, one sees that when $f$ is proper,
$$\psi_f^{\mathrm{mot}}=\Vol(X\times_{k[t]}K)$$ in the ring $\gro^{\gal}(\Var_k)[\LL^{-1}]$ (here we forget the $Z_0$-structure on the left hand side). Note, however, that for our purposes, it is crucial to avoid the inversion of $\LL$, since we want to control the reduction modulo $\LL$
in order to apply Larsen and Lunts' theorem (Theorem \ref{thm:LL}).

\subsection{The motivic reduction}

\begin{prop}\label{prop:red}
There exists a unique group morphism
$$\red:\gro(\Var_{K})\to \gro(\Var_k)$$ such that, for every smooth and proper $K$-scheme $X$ and every snc-model $\cX$ of $X$, with
$\cX_k=\sum_{i\in I}N_i E_i,$
we have \begin{equation}\label{eq:redX}
\red(X)=\sum_{\emptyset \neq J\subset I}(1-\LL)^{|J|-1}[E_J^o].
\end{equation}
 This is a morphism of $\Z[\LL]$-modules.
Moreover, for every regular model $\cY$ of $X$ over $R$,
$$\red(X)\equiv [\cY_k]\mod \LL.$$
\end{prop}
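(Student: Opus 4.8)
The plan is to realize $\red$ as the composite of the motivic volume $\Vol_K$ of Theorem~\ref{thm:vol} with the operation of ``dividing out the monodromy action'', and then to deduce the congruence with $[\cY_k]$ by a one-blow-up computation. First I would construct a group homomorphism $q\colon\gro^{\gal}(\Var_k)\to\gro(\Var_k)$ that sends the class of a $k$-scheme $T$ of finite type with good $\gal$-action to the class $[T/\gal]$ of the quotient; the quotient exists as a $k$-scheme of finite type because the action factors through some $\mu_n$ and $T$ is covered by $\gal$-stable affine opens. Compatibility with the scissor relations is immediate, since for a $\gal$-stable closed subscheme $Y$ of $X$ the quotient map identifies $X/\gal$ with the disjoint union of $Y/\gal$ and $(X\setminus Y)/\gal$. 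Granting that $q$ also respects the trivialization of linear actions (treated below), $q$ is a well-defined homomorphism; it is $\Z[\LL]$-linear because $\gal$ acts trivially on $\A^1$, so $q(\LL\cdot[T])=[(\A^1\times_k T)/\gal]=[\A^1\times_k(T/\gal)]=\LL\cdot q([T])$, but it is not multiplicative, which is why $\red$ will only be a module homomorphism. Setting $\red=q\circ\Vol_K$ and applying $q$ to \eqref{eq:volX}, using $\Z[\LL]$-linearity and the fact that $\widetilde E_J^o$ is a $\mu_{N_J}$-torsor over $E_J^o$ (so that $\widetilde E_J^o/\gal=E_J^o$), one recovers exactly \eqref{eq:redX}; uniqueness follows because smooth and proper $K$-schemes generate $\gro(\Var_K)$ by resolution of singularities, and the $\Z[\LL]$-module structure follows from $\Vol_K$ being a ring homomorphism with $\Vol_K(\LL)=\LL$. (Alternatively, one may take \eqref{eq:redX} as the definition on snc-models and prove its independence of the model by Weak Factorization, exactly as for $\Vol_K$ in Appendix~\ref{sec:WF}.)

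To see that $q$ respects the trivialization of linear actions I must show $[(X\times_k V)/\gal]=\LL^{\dim V}\cdot[X/\gal]$ for every $X$ with good $\gal$-action and every linear representation $V$. I would stratify $X$ by the stabilizer of the $\gal$-action: as $\gal$ is procyclic and the action factors through some $\mu_n$, every stabilizer is a subgroup scheme $\mu_m$ with $m\mid n$, and the strata $X_{(m)}$ are $\gal$-stable and saturated, so by the scissor relations it is enough to treat $X=X_{(m)}$. On such $X$ the subgroup $\mu_m$ acts trivially and $G/\mu_m$ acts freely (with $G=\mu_n$), so $X\to X/\gal$ is an \'etale $G/\mu_m$-torsor and $(X\times_k V)/\gal$ is the associated bundle with fibre the affine toric variety $V/\mu_m$. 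Since $G$ is diagonalizable, $G$ --- and hence $G/\mu_m$ --- acts on $V/\mu_m$ through its (split) torus $T$; pushing the $G/\mu_m$-torsor forward along $G/\mu_m\hookrightarrow T$ to a $T$-torsor, which is a product of complements of zero sections of line bundles and hence Zariski-locally trivial, exhibits $(X\times_k V)/\gal$ as a Zariski-locally trivial $V/\mu_m$-bundle over $X/\gal$. Its class is therefore $[X/\gal]\cdot[V/\mu_m]$, and $[V/\mu_m]=\LL^{\dim V}$ because $V/\mu_m$ is the disjoint union of its torus orbits, split tori of rank $\dim V-j$ indexed by the $j$-dimensional faces of the positive orthant, whence $[V/\mu_m]=\sum_j\binom{\dim V}{j}(\LL-1)^{\dim V-j}=\LL^{\dim V}$. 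Summing over the strata yields the claim.

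For the final assertion, let $\cY$ be an arbitrary regular $R$-model of $X$. By Hironaka's embedded resolution applied to the pair $(\cY,(\cY_k)_{\mathrm{red}})$ there is an snc-model $\cX$ with a morphism $\mu\colon\cX\to\cY$ that is a finite composition of blow-ups along regular centres; since $\mu$ is an isomorphism over $\Spec K$, every centre lies in the special fibre of the corresponding intermediate model. Applying \eqref{eq:redX} to $\cX$ and using $(1-\LL)^{|J|-1}\equiv 1\bmod\LL$ gives $\red(X)\equiv\sum_{\emptyset\neq J\subseteq I}[E_J^o]=[(\cX_k)_{\mathrm{red}}]\bmod\LL$, so it remains to prove $[(\cX_k)_{\mathrm{red}}]\equiv[(\cY_k)_{\mathrm{red}}]\bmod\LL$. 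By induction this reduces to a single blow-up $\cW'=\mathrm{Bl}_Z\cW$ of a regular $R$-scheme $\cW$ along a regular centre $Z\subseteq\cW_k$; we may assume $c:=\mathrm{codim}_{\cW}Z\geq 2$, since otherwise the blow-up is an isomorphism. The morphism $\cW'\to\cW$ restricts to an isomorphism of $\cW'$ minus its exceptional divisor $E$ onto $\cW\setminus Z$, so $(\cW'_k)_{\mathrm{red}}\setminus E\cong(\cW_k)_{\mathrm{red}}\setminus Z$; as $E=\Pro(N_{Z/\cW})$ is a $\Pro^{c-1}$-bundle over $Z$, the scissor relations give
$$[(\cW'_k)_{\mathrm{red}}]=[(\cW_k)_{\mathrm{red}}]-[Z]+[E]=[(\cW_k)_{\mathrm{red}}]+[Z]\,(\LL+\dots+\LL^{c-1})\equiv[(\cW_k)_{\mathrm{red}}]\bmod\LL.$$
Iterating over the blow-ups in $\mu$ gives $[(\cX_k)_{\mathrm{red}}]\equiv[(\cY_k)_{\mathrm{red}}]\bmod\LL$, and since $[(\cY_k)_{\mathrm{red}}]=[\cY_k]$ in $\gro(\Var_k)$, this is the desired congruence.

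The main obstacle is the well-definedness of $q$, i.e.\ its compatibility with the trivialization of linear actions; the key point there is to recognize $V/\mu_m$ as a toric variety on which the relevant group acts through its torus, so that the bundles over the stabilizer strata become Zariski-locally trivial. By contrast, the blow-up computation in the last step is essentially formal, the crucial observation being that the only contribution of a blow-up to $[(\,\cdot\,)_{\mathrm{red}}]$ is a multiple of $\LL$.
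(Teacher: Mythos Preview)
Your proof is correct and follows essentially the same route as the paper's: define $\red$ as the composite $q\circ\Vol_K$ with $q([T])=[T/\gal]$, read off \eqref{eq:redX} from \eqref{eq:volX}, and for the last assertion pass from a regular model to an snc-model by blow-ups with smooth centers in the special fiber, observing that each blow-up leaves the class of the special fiber unchanged modulo $\LL$. The only difference is one of packaging: the paper outsources the well-definedness of the quotient map $q$ (in particular its compatibility with the trivialization of linear actions) to Looijenga \cite[5.1]{looijenga}, whereas you supply a self-contained argument via the stabilizer stratification and the observation that the residual $\mu_{n/m}$-action on $V/\mu_m$ factors through a split torus, so that the associated bundles are Zariski-locally trivial. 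Your treatment has the merit of making the paper independent of that reference at this point; the paper's treatment is shorter but otherwise identical in strategy.
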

\begin{proof}
Uniqueness follows from the fact that the classes of smooth and proper $K$-schemes generate the group $\gro(\Var_K)$, by resolution of singularities.

To construct the morphism $\red$, we can simply compose the motivic volume
$\Vol_K$ with the quotient morphism of $\Z[\LL]$-modules
$$(\cdot)/\gal:\gro^{\gal}(\Var_k)\to \gro(\Var_k):[X]\mapsto [X/\gal]$$
whose existence was proven in \cite[5.1]{looijenga} (there it was assumed that $k$ is algebraically closed, but the proof goes through for an arbitrary field $k$ of characteristic zero).

 If $\cY$ is a regular model for $X$ over $R$, then by Hironaka's resolution of singularities, we can turn $\cY$ into an snc-model by means of a finite sequence of blow-ups
 with smooth centers contained in the special fiber. Such a blow-up does not change the class of the special fiber modulo $\LL$, since the exceptional divisors are the projectivizations of the normal bundles of the centers. Thus we may assume that $\cY$ is an snc-model for $X$. In that case, applying equation \eqref{eq:redX} to $\cY$ and reducing both sides modulo $\LL$,
 we find that $\red(X)\equiv [\cY_k]$ modulo $\LL$.
\end{proof}

The existence of the morphism $\red$ can also be deduced from Bittner's presentation and Weak Factorization; see Proposition \ref{prop:motred2}.
 We call the morphism $\red$ the {\em motivic reduction} map.
 A weaker version of this map (only working modulo $\LL$ and on objects defined over the function field $k(t)$) was constructed in \cite[9.4]{hartmann}.
  The motivic volume and the motivic reduction fit into the following commutative diagram:
$$\xymatrix{
\gro(\Var_{\pfi}) \ar[r]^{\Vol}  &\gro(\Var_k)\\
\gro(\Var_{K}) \ar[u] \ar[rd]_{\red} \ar[r]^{\Vol_K} & \gro^{\gal}(\Var_k) \ar[u]_{\Res^{\gal}_{\{1\}}} \ar[d]^{(\cdot)/\gal}
\\ & \gro(\Var_k)}$$
Here the left vertical map is the base change morphism, $(\cdot)/\gal$ is the quotient morphism and $\Res^{\gal}_{\{1\}}$ is the restriction morphism that forgets the $\gal$-action.

 Note that, when $X$ is a smooth and proper $K$-scheme with semistable reduction, we have
 $$\Vol_K(X)=\Vol(X\times_K \pfi)=\red(X)$$
 in $\gro^{\gal}(\Var_k)$, where we view the first and third members of the equality as elements of $\gro^{\gal}(\Var_k)$ {\em via} the map
 $$\Res^{\{1\}}_{\gal}:\gro(\Var_k)\to \gro^{\gal}(\Var_k)$$
 that endows $k$-varieties with the trivial $\gal$-action. Thus it follows from the Semistable Reduction Theorem \cite{Mumford} that for every $K$-scheme of finite type $Y$, there exists a positive integer $n$ such that
 $$\Vol_{K(n)}(Y\times_K K(n))=\Vol(Y\times_K \pfi)=\red(Y\times_K K(n)).$$

\begin{remark}\label{rem:MRhom}
Beware that the morphism $\red$ is {\em not} multiplicative. For instance, if $X=\Spec k\llpar \sqrt{t} \rrpar$ then
is is easy to check that $\red(X)=1$ whereas $\red(X\times_K X)=2$. The issue is that the quotient morphism $(\cdot)/\gal$ is not multiplicative.
However, $\red$ does preserve some additional structure.
Let $\gro(\Var_K)^{\mathrm{ss}}$  be the subring of $\gro(\Var_K)$
 consisting of the elements whose image under $\Vol_K$ lies in the image of the injective ring morphism  $$\Res^{\{1\}}_{\gal}:\gro(\Var_k)\to \gro^{\gal}(\Var_k).$$ For instance, if $Y$ is a smooth and proper $K$-scheme with semistable reduction, then $[Y]$ lies in $\gro(\Var_K)^{\mathrm{ss}}$.
 We have $\Vol_K=\red$ on $\gro(\Var_K)^{\mathrm{ss}}$, and if we view $\gro(\Var_k)$ as a $\gro(\Var_K)^{\mathrm{ss}}$-algebra {\em via} the morphism $\Vol_K$, then $\red$ is a morphism of $\gro(\Var_K)^{\mathrm{ss}}$-modules.
\end{remark}

\section{Applications to stable rationality}\label{sec:rat}

\subsection{Specialization of stable birational equivalence}
The starting point of our applications to rationality questions is the following statement, which can be viewed as an obstruction to stable rationality of smooth and proper $\pfi$-schemes.
\begin{theorem}\label{thm:volrat}
Let $X$ and $Y$ be smooth and proper $\pfi$-schemes. If $X$ is stably birational to $Y$, then
$$\Vol(X)\equiv \Vol(Y) \mod \LL.$$ In particular, if $X$ is stably rational, then
$$\Vol(X)\equiv 1\mod \LL.$$
\end{theorem}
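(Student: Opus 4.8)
The plan is to obtain this as a formal consequence of two facts already in hand: the Larsen--Lunts criterion in the form of Corollary \ref{cor:LL}, and the statement in Corollary \ref{cor:vol} that $\Vol$ is a ring homomorphism sending $\LL$ to $\LL$. The point is simply that a ring homomorphism taking $\LL$ to $\LL$ carries the ideal generated by $\LL$ into the ideal generated by $\LL$, hence preserves congruences modulo $\LL$; combining this with the fact that stable birationality of smooth and proper schemes is detected by such a congruence gives the claim at once.

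In detail, I would argue as follows. Since $k$ has characteristic zero, so does $\pfi$, so Corollary \ref{cor:LL} is available over $\pfi$: from the hypothesis that the smooth and proper $\pfi$-schemes $X$ and $Y$ are stably birational we get $[X]\equiv [Y]\mod\LL$ in $\gro(\Var_{\pfi})$, say $[X]-[Y]=\LL\cdot a$ with $a\in\gro(\Var_{\pfi})$. Now apply the ring homomorphism $\Vol\colon\gro(\Var_{\pfi})\to\gro(\Var_k)$ of Corollary \ref{cor:vol}: since $\Vol(\LL)=\LL$ we obtain $\Vol(X)-\Vol(Y)=\Vol(\LL\cdot a)=\LL\cdot\Vol(a)$, i.e. $\Vol(X)\equiv\Vol(Y)\mod\LL$. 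For the final assertion, if $X$ is stably rational it is stably birational to $\Spec\pfi$, so the first part gives $\Vol(X)\equiv\Vol(\Spec\pfi)\mod\LL$; and $\Vol(\Spec\pfi)=[\Spec k]=1$ because $\Vol$ is a unital ring homomorphism (or directly from the formula in Corollary \ref{cor:vol} applied to the trivial model $\Spec R$). Hence $\Vol(X)\equiv 1\mod\LL$.

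I do not expect any genuine obstacle here: all the work has already been invested in the construction of $\Vol$ (Theorem \ref{thm:vol}) and in the Larsen--Lunts theorem (Theorem \ref{thm:LL}), and what remains is a one-line manipulation in the Grothendieck ring. The only things to be mildly careful about are that $\pfi$ has characteristic zero, so that Corollary \ref{cor:LL} genuinely applies, and that the argument crucially uses that $\Vol$ takes values in $\gro(\Var_k)$ itself rather than in a localization inverting $\LL$ --- it is exactly this feature that makes the reduction modulo $\LL$ meaningful and allows the output to be fed back into the Larsen--Lunts criterion.
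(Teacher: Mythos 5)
Your proposal is correct and is exactly the argument the paper gives: apply Corollary \ref{cor:LL} over $\pfi$ to get $[X]\equiv[Y]\bmod\LL$, then push through the ring homomorphism $\Vol$, which sends $\LL$ to $\LL$ and hence preserves the congruence. The paper states this in one line; your version just spells out the same steps.
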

\begin{proof}
This follows immediately from Corollary \ref{cor:LL} and the fact that $\Vol$ is a ring morphism that sends $[\A^1_{\pfi}]$ to $[\A^1_k]$.
\end{proof}
As an immediate consequence, we obtain the following specialization property for stable birational equivalence.
\begin{theorem}[Specialization of stable birational equivalence]\label{thm:specbir}
Let $\cX$ and $\cY$ be smooth and proper $R$-schemes. If $\cX_{\pfi}$ is stably birational to $\cY_{\pfi}$, then $\cX_k$ is stably birational to $\cY_k$.
 In particular, if $\cX_{\pfi}$ is stably rational, then $\cX_k$ is stably rational, as well.
\end{theorem}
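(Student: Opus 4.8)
The plan is to reduce the statement to the already-established properties of the motivic volume together with the Larsen--Lunts criterion (Corollary \ref{cor:LL}). The key observation is that a smooth and proper $R$-scheme is its own snc-model in the simplest possible way: since $\cX$ is regular and $\cX_k$ is smooth, reduced and equal to its own reduced special fiber, the index set $I$ of Corollary \ref{cor:vol} is a singleton, the multiplicity $N_1$ equals $1$, the stratum $E_1^o$ is all of $\cX_k$, and the associated $\mu_{N_1}$-cover $\widetilde{E}_1^o$ is $\cX_k$ itself. Hence Corollary \ref{cor:vol} (applied with $n=1$) gives $\Vol(\cX_{\pfi}) = [\cX_k]$ in $\gro(\Var_k)$, and likewise $\Vol(\cY_{\pfi}) = [\cY_k]$; this is exactly the ``smooth $R$-model'' case already recorded in Theorem \ref{thm:vol}.

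Next I would invoke the stable birational invariance of the volume modulo $\LL$. Since $\cX$ and $\cY$ are smooth and proper over $R$, the base changes $\cX_{\pfi}$ and $\cY_{\pfi}$ are smooth and proper over $\pfi$, and by hypothesis they are stably birational; Theorem \ref{thm:volrat} therefore yields $\Vol(\cX_{\pfi}) \equiv \Vol(\cY_{\pfi}) \bmod \LL$. Combining this with the previous paragraph, we obtain $[\cX_k] \equiv [\cY_k] \bmod \LL$ in $\gro(\Var_k)$.

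Finally, $\cX_k$ and $\cY_k$ are smooth and proper $k$-schemes, so Corollary \ref{cor:LL} translates the congruence $[\cX_k] \equiv [\cY_k] \bmod \LL$ back into the assertion that $\cX_k$ and $\cY_k$ are stably birational. For the last sentence, I would apply what has just been proved with $\cY = \Spec R$: if $\cX_{\pfi}$ is stably rational, i.e.\ stably birational to $\Spec \pfi = (\Spec R)_{\pfi}$, then $\cX_k$ is stably birational to $(\Spec R)_k = \Spec k$, i.e.\ stably rational.

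I do not expect a genuine obstacle here: all the substance is contained in the construction of $\Vol$ and in the theorem of Larsen and Lunts. The only points that need (routine) care are checking that base change along $R \to \pfi$ and $R \to k$ preserves smoothness and properness, so that both the volume formula on a smooth model and the Larsen--Lunts criterion apply to the fibers in question, and noting that the trivial $\gal$-action attached to a smooth model in Theorem \ref{thm:vol} is harmless, since after restricting to the trivial group one lands in $\gro(\Var_k)$ — which is precisely the map $\Vol$ of Corollary \ref{cor:vol} used above.
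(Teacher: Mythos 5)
Your proposal is correct and follows exactly the paper's own argument: the identity $\Vol(\cX_{\pfi})=[\cX_k]$ for a smooth model, combined with Theorem \ref{thm:volrat} and the Larsen--Lunts criterion (Corollary \ref{cor:LL}). You have merely spelled out in more detail the verifications that the paper leaves implicit.
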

\begin{proof}
This follows from Theorem \ref{thm:volrat} and Theorem \ref{thm:LL}, because $\Vol(\cX_{\pfi})=[\cX_k]$ and $\Vol(\cY_{\pfi})=[\cY_k]$ by the definition of the motivic volume.
\end{proof}

This result can be generalized to strict normal crossings degenerations in the following way. An application of this generalization will be given in Section \ref{subsec:app}.

\begin{theorem}\label{thm:dual}
Let $X$ be a smooth and proper $K$-scheme and let $\cX$ be an snc-model for $X$, with $\cX_k=\sum_{i\in I}N_i E_i$.
 Assume that every connected component
of $\widetilde{E}_J^o$ is stably rational, for every subset $J$ of $I$ of cardinality at least $2$.
 If $X_{\pfi}$ is stably rational, then all the connected components of all the covers $\widetilde{E}^o_{i}$ are stably rational.
\end{theorem}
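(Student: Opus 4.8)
The plan is to evaluate the motivic volume of $X_{\pfi}$ on the snc-model $\cX$, reduce everything modulo $\LL$, and use the hypothesis to strip off the contributions of the deep strata, so that what remains is a congruence involving only the classes of smooth proper models of the covers $\widetilde{E}_i^o$, to which Corollary~\ref{cor:LL} can be applied. I would start from the observation that, since $X_{\pfi}$ is stably rational, Theorem~\ref{thm:volrat} gives $\Vol(X_{\pfi})\equiv 1\pmod{\LL}$ in $\gro(\Var_k)$; substituting the formula of Corollary~\ref{cor:vol} applied to $\cX$, namely $\Vol(X_{\pfi})=\sum_{\emptyset\neq J\subseteq I}(1-\LL)^{|J|-1}[\widetilde{E}_J^o]$, and using $(1-\LL)^{|J|-1}\equiv 1\pmod{\LL}$, this becomes
\[
\sum_{\emptyset\neq J\subseteq I}[\widetilde{E}_J^o]\ \equiv\ 1\pmod{\LL}.
\]
The difficulty is that the strata $\widetilde{E}_J^o$ are only locally closed in $\cX_k$, hence not proper, so none of them can be fed into Corollary~\ref{cor:LL} as it stands.

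To compactify them I would use, for each non-empty $J\subseteq I$ with $E_J\neq\emptyset$, the reduced inverse image $\widetilde{\mathfrak E}_J$ of $E_J$ under the normalization morphism $\cX(N_J)\to\cX\times_R R(N_J)$. Since $\cX$ has strict normal crossings, $\cX(N_J)$ is a toroidal $k$-scheme, so $\widetilde{\mathfrak E}_J$ is a proper $k$-scheme all of whose singularities are $\LL$-rational (Definition~\ref{def:Lrat}, Example~\ref{exam:Lrat}); in particular $[\widetilde{\mathfrak E}_J]\equiv[\widehat{\mathfrak E}_J]\pmod{\LL}$ for any resolution $\widehat{\mathfrak E}_J\to\widetilde{\mathfrak E}_J$, and $\widehat{\mathfrak E}_J$ is smooth and proper. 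Moreover, as $N_{J'}\mid N_J$ whenever $J\subseteq J'$, the identification of the covers under further ramified base change recalled in Section~\ref{sec:prelim} (see \cite[3.2.2]{BuNi}) shows that the reduced inverse image of $E_{J'}^o$ in $\cX(N_J)_k$ is $\widetilde{E}_{J'}^o$; since $E_J=\bigsqcup_{J'\supseteq J}E_{J'}^o$, this exhibits $\widetilde{\mathfrak E}_J$ as the disjoint union of the locally closed subschemes $\widetilde{E}_{J'}^o$, $J'\supseteq J$, so that
\[
[\widetilde{\mathfrak E}_J]=\sum_{J'\supseteq J}[\widetilde{E}_{J'}^o]\quad\text{in }\gro(\Var_k),
\]
with $\widetilde{E}_J^o$ dense in $\widetilde{\mathfrak E}_J$ (again because the crossings are normal); consequently every connected component of $\widehat{\mathfrak E}_J$ is birational to a connected component of $\widetilde{E}_J^o$.

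I would then apply this with $|J|\ge 2$: here every $J'\supseteq J$ also has $|J'|\ge 2$, so every connected component of $\widetilde{E}_J^o$, hence of $\widehat{\mathfrak E}_J$, is stably rational by hypothesis, and Corollary~\ref{cor:LL} gives $[\widetilde{\mathfrak E}_J]\equiv[\widehat{\mathfrak E}_J]\equiv c_J\pmod{\LL}$ for some $c_J\in\Z$. A downward induction on $|J|$ via the identity $[\widetilde{E}_J^o]=[\widetilde{\mathfrak E}_J]-\sum_{J'\supsetneq J}[\widetilde{E}_{J'}^o]$ then yields $[\widetilde{E}_J^o]\equiv d_J\pmod{\LL}$ with $d_J\in\Z$, for every $J$ with $|J|\ge 2$. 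Substituting these integers into the first displayed congruence leaves $\sum_{i\in I}[\widetilde{E}_i^o]\equiv D\pmod{\LL}$ for some $D\in\Z$. Finally, applying the same compactification to $J=\{i\}$ gives $[\widetilde{\mathfrak E}_i]=[\widetilde{E}_i^o]+\sum_{J\ni i,\,|J|\ge 2}[\widetilde{E}_J^o]$, whose last sum is congruent to an integer by the previous step, while $[\widetilde{\mathfrak E}_i]\equiv[\widehat{\mathfrak E}_i]\pmod{\LL}$; adding these over $i\in I$ and using $\sum_i[\widetilde{E}_i^o]\equiv D$, I obtain that the \emph{smooth and proper} $k$-scheme $\bigsqcup_{i\in I}\widehat{\mathfrak E}_i$ satisfies $\big[\bigsqcup_{i}\widehat{\mathfrak E}_i\big]\equiv(\text{an integer})\pmod{\LL}$. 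By Corollary~\ref{cor:LL}, every connected component of every $\widehat{\mathfrak E}_i$ is therefore stably rational; this is the step that genuinely uses the full force of the Larsen--Lunts theorem, namely that a $\Z$-combination of classes of connected smooth proper varieties can be congruent to an integer modulo $\LL$ only if each of those varieties is stably rational. Since every connected component of $\widehat{\mathfrak E}_i$ is birational to a connected component of the dense open stratum $\widetilde{E}_i^o$, every connected component of every $\widetilde{E}_i^o$ is stably rational, which is the assertion.

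The step I expect to be the main obstacle is the treatment of the singular compactifications $\widetilde{\mathfrak E}_J$: one must check carefully that $\cX(N_J)$ is toroidal and that the reduced inverse image of $E_J$ in its special fibre has $\LL$-rational singularities, so that passing to a resolution does not change its class modulo $\LL$. Once that is in place, the remainder is linear bookkeeping in $\gro(\Var_k)/(\LL)$, resting on the hypothesis and on Corollary~\ref{cor:LL}.
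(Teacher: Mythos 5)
Your overall architecture is sound and genuinely different from the paper's: you compactify each cover $\widetilde{E}_J^o$ inside the normalized base change $\cX(N_J)$ and run a downward induction on $|J|$, whereas the paper first invokes the Semistable Reduction Theorem \cite{Mumford} to replace $\cX$ by a toroidal modification $\cY$ of $\cX(m)$ with reduced snc special fiber. After that reduction all closed strata $E'_{J'}$ of $\cY_k$ are automatically smooth and proper, the covers become trivial ($\widetilde{E}^o_{J'}=E^o_{J'}$), and the whole argument collapses to an inclusion--exclusion identity $\sum_J[E_J^o]=\sum_J(-1)^{|J|-1}[E_J]$ on smooth proper schemes plus Corollary \ref{cor:LL}; the relation back to the original covers is supplied by \cite[3.2.2]{BuNi} and the toroidal structure of the modification. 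Your route avoids semistable reduction but, in exchange, must control the geometry of the closures $\widetilde{\mathfrak E}_J$, and that is where the proof is incomplete.

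The step you yourself flag as the main obstacle is a genuine gap, for two reasons. First, the assertion that $\cX(N_J)$ is ``toroidal'' and that therefore $[\widetilde{\mathfrak E}_J]\equiv[\widehat{\mathfrak E}_J]\pmod{\LL}$ is not covered by Example \ref{exam:Lrat}(3): that example only treats \emph{strictly} toroidal singularities (Zariski-local \'etale charts to toric varieties), and the paper explicitly warns that this is stronger than the \'etale-local notion. The normalized base change $\cX(N_J)$ is in general only \'etale-locally toric -- the very fact that $\widetilde{E}_J^o\to E_J^o$ is a possibly nontrivial $\mu_{N_J}$-torsor is an obstruction to Zariski-local toric charts -- and for twisted forms the class of a resolution fiber over a point $y$ need not be $1$ in $\gro(\Var_{\kappa(y)})$ (already $[\Spec L]\not\equiv 1\pmod\LL$ for a nontrivial finite extension $L/\kappa(y)$). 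Second, $\widetilde{\mathfrak E}_J$ need not be integral: its irreducible components are the closures of the connected components of $\widetilde{E}_J^o$, and distinct components can meet over deeper strata. Definition \ref{def:Lrat} is formulated for integral schemes, and for a reducible $\widetilde{\mathfrak E}_J$ the comparison between $[\widetilde{\mathfrak E}_J]$ and $\bigl[\bigsqcup_C\widehat{C}\bigr]$ acquires correction terms supported on the pairwise intersections, which you would again have to show are congruent to integers modulo $\LL$ -- essentially the same problem one level down. None of this is addressed in the proposal, and the bookkeeping in the second half (which is otherwise correct) rests entirely on it. The cleanest repair is exactly the paper's move: perform a toroidal modification of $\cX(m)$ to reach a semistable model before compactifying, so that all strata closures are smooth and proper and no $\LL$-rationality argument is needed.
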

\begin{proof}
  Let $m$ be a positive integer and denote by $\cX(m)$ the normalization of
 $\cX\times_{R}R(m)$.
The Semistable Reduction Theorem tells us that, if $m$ is sufficiently divisible, then we can find a toroidal modification $\cY\to \cX(m)$
such that $\cY$ is a semi-stable model for $X\times_K K(m)$.
 Moreover, for every non-empty subset $J$ of $I$, the cover $\widetilde{E}^o_J$ is the reduced inverse image of $E_J^o$ in $\cX(m)_k$, by \cite[3.2.2]{BuNi}.

We write $\cY_k=\sum_{i\in I'}E'_i$.
 Then, for every subset $J'$ of $I'$ of cardinality at least $2$, each connected component of  $E'_{J'}$ is stably rational, because it is birational to a connected component of
 $\widetilde{E}_J^o\times_k \P^\ell_k$ for some subset $J$ of $I$ of cardinality at least $2$ and some $\ell\geq 0$.
  Moreover, for every $i$ in $I$, the cover $\widetilde{E}^o_{i}$ is isomorphic to a disjoint union of strata $(E'_{i'})^o$ with
 $i'\in I'$.
    Thus, replacing $\cX$ by $\cY$, we may assume that $\cX_k$ is reduced; then $\wt{E}^o_J = E^o_J$ for all non-empty subsets $J$ of $I$.
 We denote by $c_J$ the number of connected components of $E_J$.

 The motivic volume of $X_{\pfi}$ is congruent modulo $\LL$  to
$$\sum_{\emptyset \neq J\subset I}[E_J^o] = \sum_{\emptyset \neq J\subset I}(-1)^{|J|-1}[E_J]$$ where the equality follows
 from a straightforward inclusion-exclusion argument.
 Since $[E_J]\equiv c_J$ modulo $\LL$ as soon as $|J|\geq 2$, we can rewrite this expression as
 $$\chi(\Delta)-|I|+\sum_{i\in I}[E_i]$$
 where $\Delta$ is the dual intersection complex of $\cX_k$ and $\chi(\Delta)$ denotes its Euler characteristic.
 On the other hand, if $X_{\pfi}$ is stably rational, then its motivic volume is congruent to $1$ modulo $\L$,
 so that $\sum_{i\in I}[E_i]=[\sqcup_{i\in I} E_i]$ is congruent to an integer
 modulo $\L$. Corollary \ref{cor:LL} now implies that $E_i$ is stably rational for all $i$ in $I$.
\end{proof}

\iffalse
\begin{lemma}\label{lem:refl}
Let $F \subset F'$ be an extension of algebraically closed fields. Let $X$ and $Y$ be integral $F$-schemes of finite type, and set $X'=X\times_F F'$ and $Y'=Y\times_F F'$.
Then $X$ and $Y$ are birational  if and only if $X'$ and $Y'$ are birational. In particular, $X$ is stably rational if and only if $X'$ is stably rational.
\end{lemma}
\begin{proof}
The condition is obviously necessary; it is also sufficient, by the following standard spreading out argument.
 Let $\phi:X'\dashrightarrow Y'$ be a birational map from $X'$ to $Y'$. Then $\phi$ is defined over a finitely generated extension of $F$; thus it descends
 to a birational map $X\times_F B\dashrightarrow Y\times_F B$ over some integral $F$-scheme of finite type $B$ whose function field is contained in $F'$.
 Restricting this map to a general closed point $b\in B$
 yields a birational map between $X$ and $Y$.
\end{proof}
\fi

Theorem \ref{thm:specbir} has the following interesting geometric consequence.
\begin{theorem}\label{thm:countable}
Let $S$ be a Noetherian $\Q$-scheme, and let $f:X\to S$ and $g:Y\to S$ be smooth and proper
 morphisms. Let $S_{\mathrm{sb}}$ be the set of points $s$ in $S$ such that the fibers of $f$ and $g$ over $\overline{s}$ are stably birational, for any geometric point $\overline{s}$ based at $s$. Then
 $S_{\mathrm{sb}}$ is a countable union of closed subsets of $S$.
 \end{theorem}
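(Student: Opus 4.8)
The strategy is to combine the specialization statement of Theorem~\ref{thm:specbir} with the elementary fact that a scheme of finite type over $\Q$ has only countably many points. I would first note that $S_{\mathrm{sb}}$ is well defined, i.e.\ that whether the geometric fibers of $f$ and $g$ over $s$ are stably birational is independent of the chosen geometric point based at $s$: stable birational equivalence of reduced schemes of finite type over an algebraically closed field is unchanged under any extension of that field, by the standard spreading-out argument for birational maps (applied to the geometric connected components of the fibers). Using this, I would reduce to the case where $S$ has finite type over $\Q$. Covering $S$ by finitely many affine opens $U_j$ and using that a specialization of a point of $U_j$ stays inside $U_j$, one reduces to $S=\Spec A$ with $A$ a Noetherian $\Q$-algebra; writing $A$ as the filtered colimit of its finitely generated $\Q$-subalgebras $A_\lambda$, so that $S$ is the cofiltered limit of the schemes $S_\lambda=\Spec A_\lambda$, standard approximation results for finitely presented morphisms produce an index $\lambda_0$ and smooth proper morphisms $X_{\lambda_0}\to S_{\lambda_0}$ and $Y_{\lambda_0}\to S_{\lambda_0}$ whose base changes to $S$ recover $f$ and $g$. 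For $s\in S$ lying over $s_0\in S_{\lambda_0}$ the geometric fibers over $s$ are obtained from those over $s_0$ by base change along an extension $\overline{\kappa(s_0)}\subset\overline{\kappa(s)}$ of algebraically closed fields, so the invariance just recalled shows that $s\in S_{\mathrm{sb}}$ if and only if $s_0\in (S_{\lambda_0})_{\mathrm{sb}}$. Thus $S_{\mathrm{sb}}$ is the preimage of $(S_{\lambda_0})_{\mathrm{sb}}$ under $S\to S_{\lambda_0}$, and it suffices to treat $S$ of finite type over $\Q$.

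In that case two facts are needed. First, $|S|$ is countable: by Noetherian induction on $\dim S$ it is enough to treat $S=\Spec A$ with $A$ a finitely generated --- hence countable --- $\Q$-domain, where $|S|=\{\eta\}\cup\bigcup_{0\neq a\in A}|V(a)|$ and each $V(a)$ has strictly smaller dimension. Second, $S_{\mathrm{sb}}$ is stable under specialization. For this, given $s_0\in\overline{\{s_1\}}$ with $s_1\in S_{\mathrm{sb}}$, I would set $Z=\overline{\{s_1\}}$ with its reduced structure and note that $\mathcal{O}_{Z,s_0}$ is a Noetherian local domain with fraction field $\kappa(s_1)$; blowing up its maximal ideal, passing to a generic point of an irreducible component of the exceptional divisor and normalizing (Krull--Akizuki) produces a discrete valuation ring $R'$ with fraction field $\kappa(s_1)$ together with a morphism $\Spec R'\to S$ sending the generic point to $s_1$ and the closed point to $s_0$. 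Pulling $f$ and $g$ back along this morphism, replacing $R'$ by its completion --- a complete equicharacteristic discrete valuation ring, hence of the form $\ell\llbr t\rrbr$ with $\ell$ its residue field, by Cohen's structure theorem --- and base changing further along $\ell\llbr t\rrbr\hookrightarrow\overline{\ell}\llbr t\rrbr$, one lands in the situation of Theorem~\ref{thm:specbir} with $k=\overline{\ell}$. Since $s_1\in S_{\mathrm{sb}}$ the geometric generic fibers of the resulting families are stably birational, so Theorem~\ref{thm:specbir} yields that their special fibers are stably birational over $\overline{\ell}$; since these special fibers are the base changes of the fibers of $f$ and $g$ over $s_0$ along $\kappa(s_0)\hookrightarrow\overline{\ell}$, the same invariance under extension of algebraically closed fields descends the conclusion to stable birational equivalence over $\overline{\kappa(s_0)}$, so $s_0\in S_{\mathrm{sb}}$.

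Granting the two facts, stability under specialization gives $\overline{\{s\}}\subset S_{\mathrm{sb}}$ for every $s\in S_{\mathrm{sb}}$, hence $S_{\mathrm{sb}}=\bigcup_{s\in S_{\mathrm{sb}}}\overline{\{s\}}$, which by countability of $|S|$ is a countable union of closed subsets of $S$. The main obstacle I expect is the specialization step: Theorem~\ref{thm:specbir} is formulated over the specific base $k\llbr t\rrbr$, so one must carefully reduce an arbitrary specialization of points of $S$ to that setting --- constructing a trait dominating $\mathcal{O}_{Z,s_0}$ and invoking Cohen's theorem --- and throughout keep track of the extensions of algebraically closed fields that arise, using the spreading-out argument to guarantee that stable birational equivalence of geometric fibers is unaffected by them.
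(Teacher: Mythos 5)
Your proof is correct, and its second half --- stability of $S_{\mathrm{sb}}$ under specialization, obtained by producing a discrete valuation ring with fraction field $\kappa(s_1)$ dominating $\mathcal{O}_{\overline{\{s_1\}},s_0}$, invoking Krull--Akizuki and Cohen's structure theorem, and then applying Theorem \ref{thm:specbir} over $\overline{\ell}\llbr t\rrbr$ --- is essentially the paper's argument (the paper reduces to a one-dimensional local base and normalizes; you get there via a blow-up, which is equally standard). The first half is genuinely different. The paper quotes the proof of Proposition 2.3 of \cite{dFF}, a Hilbert-scheme parametrization of birational maps, to show that $S_{\mathrm{sb}}$ is a countable union of \emph{locally} closed subsets of an arbitrary Noetherian $\Q$-scheme; you instead reduce by standard limit arguments to $S$ of finite type over $\Q$ and observe that such a scheme has only countably many points (every ideal of a countable Noetherian ring is finitely generated), so that specialization-stability alone yields $S_{\mathrm{sb}}=\bigcup_{s\in S_{\mathrm{sb}}}\overline{\{s\}}$. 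This is more elementary and removes the \cite{dFF} input entirely; it exploits the countability of $\Q$, which the \cite{dFF} argument does not need, but that costs nothing here since the base is a $\Q$-scheme. Two small repairs: the assertion that ``a specialization of a point of $U_j$ stays inside $U_j$'' is false (the generic point of $\P^1$ lies in $\A^1$ but specializes to the point at infinity); what is true, and what you need, is that open sets are stable under generization, so any specialization pair is contained in whichever chart contains the closed member of the pair --- this lets you check specialization-stability of $S_{\mathrm{sb}}$ affine-locally and then replace the locally closed pieces produced by the several charts with their closures, exactly as the paper does with its locally closed pieces. Also, in the last step no descent along $\kappa(s_0)\hookrightarrow\overline{\ell}$ is needed: Krull--Akizuki gives that the residue field of your valuation ring is a finite extension of $\kappa(s_0)$, so $\overline{\ell}$ is already an algebraic closure of $\kappa(s_0)$.
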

\begin{proof}
By the same proof as for Proposition 2.3 in \cite{dFF}, the set $S_{\mathrm{sr}}$ is a countable union of {\em locally} closed subsets in $S$
 (in \cite{dFF} the authors
  only consider closed points and assume that $S$ is of finite type over an algebraically closed field, but the proof yields this more general result; we can reduce to the case where $f$ and $g$ are projective by replacing $X$ and $Y$ by birationally equivalent smooth projective families, up to a finite partition of $S$ into subschemes).

 Thus it suffices to show that the set $S_{\mathrm{sb}}$ is closed under specialization.
 For this we can reduce to the case where $S$ is an integral local $\Q$-scheme of dimension one.
 The stable birationality of geometric fibers is invariant under arbitrary base change.
 By base change to the normalization of $S$ and localizing at a closed point, we may assume that $S$ is regular (here we use that the normalization of a Noetherian domain of dimension one is again Noetherian, by the Krull-Akizuki Theorem). Then $S$ is the spectrum of a discrete valuation ring $A$ of equal characteristic zero, and the completion of $A$ is isomorphic to a power series ring $F\llbr t\rrbr$ where $F$ is a field of characteristic zero. Now the result follows from  Theorem \ref{thm:specbir}.
\end{proof}

\begin{cor}\label{cor:stabratlocus}
Let $S$ be a Noetherian $\Q$-scheme, and let $f:X\to S$ be a smooth and proper
 morphism. Let $S_{\mathrm{sr}}$ be the set of points $s$ in $S$ such that the fiber of $f$ over $\overline{s}$ is stably rational, for any geometric point $\overline{s}$ based at $s$.
  Then $S_{\mathrm{sr}}$ is a countable union of closed subsets of $S$.

  In particular, if $S$ is integral and the geometric generic fiber of $f$ is stably rational, then every geometric fiber of $f$ is stably rational.
\end{cor}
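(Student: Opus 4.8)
The plan is to deduce the corollary directly from Theorem \ref{thm:countable} by applying it to a well-chosen second family. I would take $g$ to be the identity morphism $\mathrm{id}_S\colon S\to S$ (so $Y=S$), which is manifestly smooth and proper since it is an isomorphism, and whose geometric fiber over a geometric point $\overline{s}$ based at $s\in S$ is $\Spec$ of the corresponding algebraically closed field. By the definition of stable rationality, the fiber $X_{\overline{s}}$ is stably birational to $Y_{\overline{s}}$ exactly when $X_{\overline{s}}$ is stably rational; hence $S_{\mathrm{sr}}$ coincides with the set $S_{\mathrm{sb}}$ attached to the pair $(f,g)$. Theorem \ref{thm:countable} then says that $S_{\mathrm{sr}}$ is a countable union of closed subsets of $S$, which is the first assertion.

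For the last assertion, I would argue as follows. Assume $S$ is integral with generic point $\eta$ and that the geometric generic fiber of $f$ is stably rational, so that $\eta\in S_{\mathrm{sr}}$. Writing $S_{\mathrm{sr}}=\bigcup_{n\ge 1}Z_n$ with each $Z_n$ closed in $S$ (as provided by the first part), there is some index $n_0$ with $\eta\in Z_{n_0}$. Since $S$ is integral, $\overline{\{\eta\}}=S$, and a closed subset of $S$ containing $\eta$ must therefore be all of $S$; thus $Z_{n_0}=S$ and $S_{\mathrm{sr}}=S$, i.e.\ every geometric fiber of $f$ is stably rational.

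I do not anticipate any real obstacle here: the argument is essentially a formal consequence of Theorem \ref{thm:countable}. The only things to check are entirely routine — that $\mathrm{id}_S$ qualifies as a smooth and proper morphism and that stable birationality to the spectrum of an algebraically closed field is literally the definition of stable rationality — together with the elementary topological fact that a closed subset of an integral scheme which contains the generic point equals the whole scheme. (One could also bypass Theorem \ref{thm:countable} for the ``in particular'' statement and instead reduce, as in its proof, to the case $S=\Spec F\llbr t\rrbr$ and invoke Theorem \ref{thm:specbir} directly, but routing everything through Theorem \ref{thm:countable} is cleaner.)
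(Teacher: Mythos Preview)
Your proposal is correct and follows exactly the paper's approach: the paper's proof consists of the single sentence ``This follows from Theorem \ref{thm:countable} by taking $Y=S$,'' which is precisely your choice $g=\mathrm{id}_S$. Your justification of the ``in particular'' clause via the generic point lying in one of the closed sets is the standard unwinding that the paper leaves implicit.
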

\begin{proof}
This follows from Theorem \ref{thm:countable} by taking $Y=S$.
\end{proof}

\begin{cor}\label{cor:verygen}
Assume that $k$ is uncountable and algebraically closed. Let $S$ be a $F$-scheme of finite type, and let $f:X\to S$ be a smooth and proper morphism. If a very general closed fiber of $f$ is stably rational, then every  closed fiber of $f$ is stably rational.
\end{cor}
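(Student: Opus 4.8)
The plan is to deduce the statement from Corollary \ref{cor:stabratlocus} together with the standard fact that, over an uncountable algebraically closed field, an irreducible variety is not the union of countably many proper closed subvarieties.

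I would begin by unwinding the definitions. Since $k$ is algebraically closed, every closed point $s$ of $S$ has residue field $k$, so the fibre $X_s$ is a $k$-scheme that coincides with the geometric fibre $X_{\overline{s}}$; in particular $X_s$ is stably rational precisely when $s$ belongs to the set $S_{\mathrm{sr}}$ of Corollary \ref{cor:stabratlocus}. That corollary tells us that $S_{\mathrm{sr}}=\bigcup_{m\geq 1}W_m$ for some countable family of closed subsets $W_m$ of $S$. Recall that ``a very general closed fibre of $f$ is stably rational'' means that there is a countable family of proper closed subsets $Z_n\subsetneq S$ (taken on each irreducible component separately if $S$ is reducible) such that $X_s$ is stably rational for every closed point $s$ lying outside $\bigcup_n Z_n$; hence every closed point of $S$ lies in $\bigl(\bigcup_m W_m\bigr)\cup\bigl(\bigcup_n Z_n\bigr)$.

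After replacing $S$ by one of its irreducible components, I may assume that $S$ is irreducible; write $\eta$ for its generic point. If $\eta\in W_m$ for some $m$, then $W_m=S$ because $W_m$ is closed, so $S_{\mathrm{sr}}=S$ and every closed fibre of $f$ is stably rational, as desired. Otherwise each $W_m$ is a proper closed subset of $S$, and together with the proper closed subsets $Z_n$ this exhibits the set of closed points of $S$ as contained in a countable union of proper closed subvarieties of the irreducible $k$-variety $S$. Since $k$ is uncountable and algebraically closed and $S$ is nonempty of finite type over $k$, the set of closed points of $S$ is nonempty and cannot be covered in this way; this contradiction shows that the second case does not occur, and the proof is complete.

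I do not anticipate a genuine difficulty here: the argument is a routine combination of the countability statement of Corollary \ref{cor:stabratlocus} with a Baire-type property of uncountable algebraically closed fields. The only point demanding a little attention is making precise what ``very general'' should mean when the base $S$ is reducible, which is why I would record the definition explicitly before running the dichotomy on the generic point.
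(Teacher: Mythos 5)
Your argument is correct and is exactly the intended one: the paper's proof of this corollary is the single line ``This follows immediately from Corollary \ref{cor:stabratlocus},'' and what you have written is precisely the standard filling-in of that deduction (countable union of closed subsets, dichotomy at the generic point, and the fact that an irreducible variety over an uncountable field is not a countable union of proper closed subvarieties). No gaps; your care about the meaning of ``very general'' for reducible $S$ is a reasonable extra precaution.
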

\begin{proof}
This follows immediately from
Corollary \ref{cor:stabratlocus}.
%It suffices to show that $f^{-1}(s)$ is stably rational, assuming that $f^{-1}(s)$ is smooth.
% Then $\cX_R$ is a smooth $R$-model of $\cX_K$. Hence,
%we have
%$$[f^{-1}(s)]=\Vol(\cX_{\pfi})$$ in $\gro(\Var_k)$, and thus $$[f^{-1}(s)]\equiv 1\mod \LL$$ by Proposition \ref{prop:verygen}.
% Theorem \ref{thm:LL} now implies that $f^{-1}(s)$ is stably rational.
\end{proof}

\subsection{$\LL$-rational singularities and $\LL$-faithful models}
 The aim of this section is to generalize Corollary \ref{cor:verygen} to families with mildly singular fibers. We will
 consider a class of singularities characterized by the following definition.

 \begin{definition}\label{def:ordinary}
Let $Y$ be a reduced $k$-scheme of finite type and let $y$ be a singular point of $Y$. We say that $Y$ has an {\em ordinary double point} at $y$
 if, Zariski-locally around $y$, the singular locus $Y_{\mathrm{sing}}$ of $Y$ is smooth, and the projectivized normal cone of $Y_{\mathrm{sing}}$ in $Y$ is a smooth quadric bundle over
 $Y_{\mathrm{sing}}$ that has a section.
 \end{definition}

   Note that we do not require $y$ to be a closed point of $Y$, and that ordinary double points are not necessarily isolated singularities. The definition also includes the case where $Y_{\mathrm{sing}}$ has codimension one in $Y$; then the projectivized normal cone is simply a trivial degree two cover of $Y_{\mathrm{sing}}$. If $Y$ has only ordinary double points as singularities, then the blow-up of $Y$ along $Y_{\mathrm{sing}}$ is a resolution of singularities for $Y$, and we can identify the exceptional divisor of this resolution with the projectivized normal cone of $Y_{\mathrm{sing}}$ in $Y$. The following result gives a characterization of ordinary double points that are hypersurface singularities.

\begin{prop}\label{prop:hypersurf}
Let $Y$ be a reduced $k$-scheme of finite type.

\begin{enumerate}
\item \label{it:odp}
 Assume that at every singular point $y$ of $Y$,  the completed local ring is of the form
 $$\widehat{\mathcal{O}_{Y,y}}\cong \kappa(y)\llbr z_0,\ldots,z_n\rrbr/(q(z_0,\ldots,z_n))$$ for some integer $n> 0$ and some isotropic quadratic form $q$ over $\kappa(y)$ of rank at least $2$. Then all the singular points of $Y$ are ordinary double points.

\item \label{it:q}  Let $y$ be an ordinary double point of $Y$, let $n$ be the dimension of $\mathcal{O}_{Y,y}$, and assume that the embedding dimension of $\mathcal{O}_{Y,y}$ equals $n+1$. Denote by $d$ the codimension of the singular locus of $Y$ at the point $y$.
Then there exists a surjective morphism of $k$-algebras
$$\varphi:\kappa(y)\llbr z_0,\ldots,z_n \rrbr \to  \widehat{\mathcal{O}_{Y,y}}.$$
 For every such morphism $\varphi$, there exists a $\kappa(y)$-automorphism $\theta$ of $\kappa(y)\llbr z_0,\ldots,z_n \rrbr$ such that
 the kernel of $\varphi\circ \theta$ is generated by an isotropic quadratic form $q$ over $\kappa(y)$ of rank $d+1$.
\end{enumerate}
\end{prop}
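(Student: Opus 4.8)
The plan is to treat the two parts separately, in each case translating the geometry of $Y_{\mathrm{sing}}$ and of the projectivized normal cone into statements about the initial ideal of a formal power series, and then normalizing that series by a formal change of coordinates.

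For part \eqref{it:odp}, I would first diagonalize $q$: over the characteristic-zero field $\kappa(y)$, after a linear change of the $z_i$ extending to a $\kappa(y)$-automorphism of $\kappa(y)\llbr z_0,\dots,z_n\rrbr$, we may assume $q=\sum_{i=0}^{r-1}a_iz_i^2$ with $a_i\neq 0$ and $r=\mathrm{rank}(q)\ge 2$. The Jacobian criterion for the hypersurface $\Spec\widehat{\mathcal{O}_{Y,y}}$ then identifies its singular locus with $V(z_0,\dots,z_{r-1})$, which is regular of dimension $n-r+1$; since formation of the singular locus commutes with the faithfully flat map $\mathcal{O}_{Y,y}\to\widehat{\mathcal{O}_{Y,y}}$ and regularity descends along it, $Y_{\mathrm{sing}}$ is regular at $y$, hence smooth over $k$ on a Zariski neighbourhood of $y$ (all residue fields are perfect). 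As $z_0,\dots,z_{r-1}$ is a regular sequence, the associated graded ring of $\widehat{\mathcal{O}_{Y,y}}$ along the ideal of $Y_{\mathrm{sing}}$ is $\kappa(y)\llbr z_r,\dots,z_n\rrbr[Z_0,\dots,Z_{r-1}]/(\bar q)$ with $\bar q=\sum a_iZ_i^2$, so the projectivized normal cone is, formally locally at $y$, the constant quadric bundle $\{\bar q=0\}\subseteq\Pro^{r-1}_{Y_{\mathrm{sing}}}$; this is smooth because $r\ge 2$, and it has a section because the isotropy of $q$ supplies a $\kappa(y)$-rational point of $\{\bar q=0\}$. Being a smooth quadric bundle with a section is an open condition on the base, so these conclusions hold on an honest Zariski neighbourhood of $y$, and Definition \ref{def:ordinary} is satisfied.

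For part \eqref{it:q}, the hypothesis $\mathrm{embdim}\,\mathcal{O}_{Y,y}=n+1=\dim\mathcal{O}_{Y,y}+1$ means the singularity is a hypersurface: Cohen's structure theorem (equicharacteristic case) furnishes a surjection $\varphi\colon\kappa(y)\llbr z_0,\dots,z_n\rrbr\to\widehat{\mathcal{O}_{Y,y}}$ with principal kernel $(g)$, and $g\in\mathfrak m^2$ since $\varphi$ is an isomorphism on cotangent spaces. After composing $\varphi$ with a suitable $\kappa(y)$-automorphism of the source I may assume the reduced singular locus of $\Spec\kappa(y)\llbr z\rrbr/(g)$ is $V(I)$ with $I=(z_0,\dots,z_d)$, using that $\widehat{\mathcal{O}_{Y_{\mathrm{sing}},y}}$ is regular of codimension $d$ in $\widehat{\mathcal{O}_{Y,y}}$. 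Since $g$ vanishes on $V(I)$ we have $g\in I$; let $g^{*}$ be its initial form in $\mathrm{gr}_I\bigl(\kappa(y)\llbr z\rrbr\bigr)=\kappa(y)\llbr z_{d+1},\dots,z_n\rrbr[Z_0,\dots,Z_d]$. Because this graded ring is a polynomial ring over a domain, $g^{*}$ is a nonzerodivisor, so $\mathrm{gr}_{\bar I}(\widehat{\mathcal{O}_{Y,y}})=\mathrm{gr}_I\bigl(\kappa(y)\llbr z\rrbr\bigr)/(g^{*})$ and the projectivized normal cone is $\{g^{*}=0\}\subseteq\Pro^{d}_{Y_{\mathrm{sing}}}$. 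Now Definition \ref{def:ordinary} forces this to be a smooth quadric bundle with a section, whence $g^{*}$ has degree $2$ and is a quadratic form in $Z_0,\dots,Z_d$ which is nondegenerate over $\kappa(y)\llbr z_{d+1},\dots,z_n\rrbr$ of rank $d+1$ and isotropic; in particular $g\in I^2$.

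It remains to pass from this normal form of the initial form to a normal form for $g$ itself. Completing the square over the complete local ring $\kappa(y)\llbr z_{d+1},\dots,z_n\rrbr$ --- whose $1+\mathfrak m$ admits square roots since the characteristic is zero --- transforms $g^{*}$, by a linear substitution in $Z_0,\dots,Z_d$, into a quadratic form $q$ with coefficients in $\kappa(y)$, still of rank $d+1$ and isotropic; lifting this to an automorphism of $\kappa(y)\llbr z\rrbr$ reduces us to the case where the $I$-initial form of $g$ equals $q(z_0,\dots,z_d)$. Finally, a parametrized formal Morse (splitting) lemma, applicable because the transverse Hessian $q$ is nondegenerate, produces a $\kappa(y)$-automorphism $\theta$ of $\kappa(y)\llbr z\rrbr$ with $g\circ\theta=q$, so that $\ker(\varphi\circ\theta)=(q)$; since the argument used nothing special about $\varphi$, it applies to any surjection as in the statement, giving the full claim. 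I expect the main obstacle to be this last step --- a careful proof of the parametrized Morse lemma in the formal category (equivalently, that a hypersurface singularity with smooth singular locus whose transverse normal cone is a smooth quadric is formally a product of a smooth scheme with the cone over a smooth quadric) --- together with, to a lesser degree, the spreading-out arguments in part \eqref{it:odp} that descend the conclusions from the completion to a Zariski neighbourhood.
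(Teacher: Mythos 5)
Your part \eqref{it:q} follows essentially the same route as the paper: reduce to a hypersurface presentation via Cohen's theorem, normalize the ideal of the singular locus to $I=(z_0,\dots,z_d)$, read off from the projectivized normal cone that the $I$-initial form of $g$ is a nondegenerate isotropic quadratic form of rank $d+1$, and then remove the higher-order terms by a formal change of variables. The ``parametrized formal Morse lemma'' you invoke at the end is exactly what the paper proves by hand: it applies the ordinary Morse lemma to $g(z_0,\dots,z_d,0,\dots,0)$ and then eliminates the remaining terms of $f-g$ (all of which lie in $I^2$) by completing squares and extracting square roots of units in the power series ring. So that step is correct in substance, but you should actually supply the argument rather than cite it as a black box.

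Part \eqref{it:odp}, however, has a genuine gap. From the completed local ring at $y$ you obtain only a $\kappa(y)$-rational point of the fibre of the projectivized normal cone over $y$, equivalently a section over the formal neighbourhood $\Spec\widehat{\mathcal{O}_{Y_{\mathrm{sing}},y}}$. Your claim that ``being a smooth quadric bundle with a section is an open condition on the base'' is false: the locus of points of the base over which the fibre has a rational point need not be open, and a rational point of one fibre does not propagate to a section over a Zariski neighbourhood. For instance, for the conic bundle $x^2+y^2+tz^2=0$ over $\mathbb{A}^1_{\mathbb{R}}$, the fibre at $t=-1$ is isotropic while the generic fibre is anisotropic, so there is no section on any Zariski neighbourhood of $t=-1$. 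This is precisely the subtlety recorded in the Remark following the proposition, which states that the conclusion fails if the hypothesis is imposed only at an individual point. The paper closes the gap by applying the hypothesis at the \emph{generic points} of $Y_{\mathrm{sing}}$, where the completed local ring description yields a rational section of the quadric bundle, and then invoking Panin's theorem that a rationally isotropic quadratic space is locally isotropic, hence that the bundle has sections Zariski-locally. Some such input is unavoidable; the purely formal-local argument at $y$ cannot produce the Zariski-local section required by Definition \ref{def:ordinary}.
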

\begin{proof}
 \eqref{it:odp} The singular locus of $Y$ is smooth and the projectivized normal cone of $Y_{\mathrm{sing}}$ in $Y$ is a smooth quadric bundle over
 $Y_{\mathrm{sing}}$, because these properties can be checked on the completed local rings.  {\em A priori}, the description of the completed local rings only provides {\em formal} local sections for the projectivized normal cone of the singular locus; but for quadric bundles, already the existence of a rational section implies the existence of Zariski-local sections, by \cite{panin}. Thus by using the isotropy of $q$ at the generic points of the singular locus of $Y$, we find that the projectivized normal cone of the singular locus has sections Zariski-locally.

 \eqref{it:q}  Since the embedding dimension at $y$ is equal to $n+1$, we can
  find elements $(z_0,\ldots,z_n)$ in the maximal ideal $\mathfrak{m}_y$ of $\mathcal{O}_{Y,y}$ whose residue classes generate the vector space
  $\frak{m}_y/\frak{m}_y^2$ over $\kappa(y)$. Then the choice of a section of the residue morphism of $k$-algebras $\widehat{\mathcal{O}}_{Y,y}\to \kappa(y)$ determines a surjective morphism of $k$-algebras
$$\varphi:\kappa(y)\llbr z_0,\ldots,z_n \rrbr \to  \widehat{\mathcal{O}_{Y,y}}.$$
 Its kernel is generated by a power series $f(z_0,\ldots,z_n)$.
    We need to find a change of coordinates that transforms $f$ into a quadratic form.
    %; the rank of this quadratic form is then equal to the codimension of the singular locus of $Y$ at $y$, since this codimension can be computed on the completed local ring.

    If $y$ is an isolated singularity then the existence of such a coordinate transformation follows from the Morse Lemma: the assumption that the projectivized tangent cone is a smooth quadric is equivalent with the property that $f$ has a non-degenerate critical point at the origin.
  The general case can be proven in the following way. By our hypothesis that the singular locus of $Y$ is smooth, we may assume that it is defined by
  the equations $$z_0=\ldots=z_{d}=0$$ with $1\leq d\leq n$.
  If we set $$g(z_0,\ldots,z_d)=f(z_0,\ldots,z_d,0,\ldots,0),$$
  then the normal cone to the singular locus $Y_{\mathrm{sing}}$ at the point $y$ is given by $\mathrm{Proj}\,\kappa(y)[z_0,\ldots,z_d]/(q)$ where
  $q$ consists of the lowest degree terms in $g$. Because we are assuming that this normal cone is a smooth quadric, the polynomial $q$ is a non-degenerate quadratic form in $z_0,\ldots,z_d$. Applying the Morse Lemma to $g$, we can arrange by means of a coordinate transformation on $(z_0,\ldots,z_d)$ that $g$ takes the form $a_0z_0^2+\cdots+a_dz_d^2$ with $a_0,\ldots,a_d$ non-zero elements in $\kappa(y)$.

Looking at the equations for the critical locus of $f$, we see that each term of the power series $f-g$ is at least quadratic in the variables $(z_0,\ldots,z_d)$.
 We can get rid of mixed quadratic terms of the form $cz_iz_jh(z_{d+1},\ldots,z_n)$, with $c$ in $\kappa(y)$ and $i,j$ distinct elements in $\{0,\ldots,d\}$, by completing squares:
 we write
 $a_iz_i^2 +  cz_iz_jh(z_{d+1},\ldots,z_n)$ as $$a_i(z_i+\frac{c}{2a_i}z_jh(z_{d+1},\ldots,z_n))^2 - (\frac{c}{2a_i}z_jh(z_{d+1},\ldots,z_n))^2$$
 and we perform a coordinate change $$z'_i= z_i+\frac{c}{2a_i}z_jh(z_{d+1},\ldots,z_n).$$
 Note that this does not affect the equations for the critical locus of $f$. Repeating this operation, we arrive at an expression where  $f-g$ is a sum of terms of the form
 $$c'z_i^2h'(z_0,\ldots,z_n)$$ with $c'$ in $\kappa(y)$ and where $h'$ has no constant term. Then we write
 $$a_iz_i^2+ c'z_i^2h'(z_0,\ldots,z_n)=a_iz^2_i(1+\frac{c'}{a_i}h')$$ and we perform a coordinate change
 $$z_i'=z_i(1+\frac{c'}{a_i}h')^{1/2}.$$
These operations reduce $f$ to a diagonal quadratic form of rank $d+1$ over $\kappa(y)$; it is isotropic because the normal cone to $Y_{\mathrm{sing}}$ at $y$ is assumed to have a section.
\end{proof}

\begin{remark}
 Proposition \ref{prop:hypersurf}\eqref{it:odp} is false for individual points $y$: knowing the completed local ring of $Y$ at $y$ is not sufficient to conclude the
existence of a section of the projectivized normal cone along the singular locus Zariski-locally around $y$ (unless $y$ is an isolated singularity).
\end{remark}

% However, if $Y$
%   has only ordinary double points as singularities, then its singular locus $Y_{\mathrm{sing}}$ is fairly tame: it is smooth, and the projectivized normal cone of $Y_{\mathrm{sing}}$ in $Y$ is a smooth quadric bundle over $Y_{\mathrm{sing}}$ that has sections locally in the Zariski topology ({\em a priori} the definition only provides {\em formal} local sections, but in fact,
%   already the existence of a rational section implies the existence of Zariski-local sections, by \cite{panin}).
%   %\evgeny{Geometrically, ordinary double points are quadratic singularities whose normal cone admits sections locally in the Zariski topology.}
%   Also observe that $Y$ is normal at an ordinary double point $y$ if and only if the quadratic form $q$ has rank at least $3$.

The following definition is an analog of rational singularities in the context of the Grothendieck ring of varieties.

\begin{definition}\label{def:Lrat}
Let $Y$ be an integral $k$-scheme of finite type. Let $y$ be a point of $Y$ and let $\kappa(y)$ denote its residue field. We say that $Y$ has an {\em $\LL$-rational singularity} at $y$ if there exists a resolution of singularities $h:Y'\to (Y,y)$ of the germ $(Y,y)$ such that $[h^{-1}(y)] \equiv 1 \mod \L$ in $\gro(\Var_{\kappa(y)})$. We say that $Y$ has {\em $\LL$-rational singularities} if $Y$ has an $\LL$-rational singularity at every point $y$ of $Y$.
\end{definition}

If $Y$ has an $\LL$-rational singularity at $y$, then it follows easily from the Weak Factorization Theorem \cite{WF, Wlod} that
 $[h^{-1}(y)] \equiv 1 \mod \L$ in $\gro(\Var_{\kappa(y)})$ for {\em every} resolution of singularities $h:Y'\to (Y,y)$ of the germ $(Y,y)$.
 If $Y$ has $\LL$-rational singularities, then the following lemma implies that $[Y']\equiv [Y]\mod \LL$ in $\gro(\Var_k)$ for every resolution of singularities $Y'\to Y$.
This property was taken as the definition of $\LL$-rational singularities in \cite{Huh}, but it has the drawback of not being of local nature; for instance,
 according to that definition, $X\times_k \A^1_k$ has $\LL$-rational singularities for every $k$-variety $X$. This is why we have opted to work with Definition \ref{def:Lrat} instead, which is local on $Y$
 and more restrictive than \cite[Definition 6]{Huh}.

\begin{lemma}\label{lemma:spreadK0}
Let $X$ and $Y$ be $k$-schemes of finite type and let $h\colon Y\to X$ be a morphism of $k$-schemes.
 Assume that $[h^{-1}(x)] \equiv 1 \mod \L$ in $\gro(\Var_{\kappa(x)})$ for every point $x$ of $X$, where $\kappa(x)$ denotes the residue field of $X$ at $x$.
  Then $[Y]\equiv [X]\mod \L$ in $\gro(\Var_k)$.
\end{lemma}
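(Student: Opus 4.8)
The plan is to stratify $X$ into locally closed pieces over which the fibres of $h$ behave uniformly, and then use Noetherian induction to reduce to the situation where the conclusion follows by a direct constructible-function / spreading-out argument. The key tool is that a class in $\gro(\Var_k)$ can be computed from any finite partition into locally closed subschemes (scissor relations), together with the fact that over the function field of an integral base the generic fibre determines nearby fibres.

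First I would reduce to the case where $X$ is integral. By Noetherian induction on $X$: writing $X$ as a disjoint union of its irreducible components with reduced structure (minus lower-dimensional overlaps), it suffices to prove the statement when $X$ is integral, since both $[X]$ and $[Y]=[h^{-1}(X)]$ decompose additively along such a partition, and the hypothesis on fibres is inherited by each locally closed stratum. So assume $X$ is integral with generic point $\xi$ and function field $\kappa(\xi)$. The hypothesis gives $[h^{-1}(\xi)]\equiv 1 \bmod \LL$ in $\gro(\Var_{\kappa(\xi)})$.

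Next I would spread this out. The class of $h^{-1}(\xi)$ in $\gro(\Var_{\kappa(\xi)})$ is represented by a difference of classes of finite-type $\kappa(\xi)$-schemes, and any such identity modulo $\LL$ is already defined over a finitely generated sub-$k$-algebra of $\kappa(\xi)$, hence over the coordinate ring of a dense open $U\subset X$. Thus, shrinking $U$ if necessary, $[h^{-1}(U)]\equiv [U] \bmod \LL$ in $\gro(\Var_k)$: indeed $h^{-1}(U)\to U$ becomes, after such a base change, a relative situation whose class modulo $\LL$ is constant in the family — more precisely, one chooses over $U$ a finite stratification $U=\bigsqcup U_j$ and relatively nice (e.g.\ Zariski-locally trivial up to $\LL$-factors) descriptions of $h^{-1}(U_j)\to U_j$ so that $[h^{-1}(U_j)] = [U_j]\cdot 1 + \LL\cdot(\text{something})$ in $\gro(\Var_k)$; passing to $k$ via the structure map preserves these relations. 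Then $Z = X\setminus U$ is a closed subscheme of strictly smaller dimension, the hypothesis restricts to $h^{-1}(Z)\to Z$, and by the induction hypothesis $[h^{-1}(Z)]\equiv [Z]\bmod \LL$. Adding, $[Y] = [h^{-1}(U)] + [h^{-1}(Z)] \equiv [U]+[Z] = [X] \bmod \LL$.

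The main obstacle is making the spreading-out step precise: one must ensure that the congruence $[h^{-1}(\xi)]\equiv 1$ over the \emph{field} $\kappa(\xi)$ can be promoted to a congruence of \emph{relative} classes over a dense open of $X$, i.e.\ that the family $h^{-1}(U)\to U$ can be cut into pieces each of which is, modulo $\LL$ and after the base change to $k$, as simple as its generic fibre. The clean way to do this is to invoke the standard presentation of elements of $\gro(\Var)$ as $\Z$-linear combinations of classes of affine schemes together with the relation $[\text{affine line bundle}] = \LL\cdot[\text{base}]$ and $[\text{trivial }\A^n\text{-bundle}]=\LL^n[\text{base}]$; a generic-fibre identity then spreads to an open subset by standard constructibility (Chevalley) and generic-flatness arguments, and base change to $k$ is harmless because all the relations defining $\gro(\Var)$ are stable under it. No inversion of $\LL$ is needed, so the congruence modulo $\LL$ is preserved throughout.
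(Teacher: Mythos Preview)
Your approach is correct and essentially the same as the paper's: spread out the fibrewise congruence to a nonempty (locally closed) subscheme, then use additivity and Noetherian induction. The paper simply invokes \cite[3.4]{NiSe-K0} to obtain, for each point $x$, a subscheme $U\ni x$ with $[h^{-1}(U)]\equiv [U]\bmod \LL$ in $\gro(\Var_U)$, whereas you sketch this spreading-out step by hand at the generic point; otherwise the arguments coincide.
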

\begin{proof}
For every point $x$ of $X$, there exists a subscheme $U$ of $X$ containing $x$ such that $[h^{-1}(U)] \equiv [U] \mod \L$ in the Grothendieck ring of $U$-varieties $\gro(\Var_U)$, by \cite[3.4]{NiSe-K0}. The result now follows from additivity and noetherian induction.
\end{proof}

\begin{examples}\label{exam:Lrat}
\begin{enumerate}
\item Assume that $k$ is algebraically closed. Let $Y$ be a normal surface over $k$ and let $y$ be a point of $Y$. Then $Y$ has an $\LL$-rational singularity at $y$ if and only if $Y$ has a rational singularity at $y$.
 To see this, it suffices to observe that when $D$ is a connected strict normal crossings divisor on a smooth $k$-surface, then $[D]$ is congruent to $1$ modulo $\LL$ in $\gro(\Var_k)$
 if and only if $D$ is a tree of rational curves.

 \item \label{it:Lratdouble}   Let $Y$ be an integral $k$-scheme of finite type. Let $y$ be an ordinary double point of $Y$ such that, locally around $y$, the singular locus of $Y$ has codimension at least $2$. Then $Y$ has an $\LL$-rational singularity at $y$. Indeed, blowing up
        $Y$ at its singular locus resolves the singularity at $y$, and the fiber over $y$ is a smooth projective quadric $Q$ over $\kappa(y)$ of positive  dimension with a rational point.
        For such a quadric $Q$, we have $[Q]\equiv 1$ modulo $\LL$ in $\gro(\Var_{\kappa(y)})$

 \item Let $Y$ be an integral $k$-scheme of finite type. We say that $Y$ has {\em strictly toroidal} singularities if
 we can cover $Y$ by open subschemes that admit an \'etale morphism to a toric $k$-variety. This definition is stronger than the usual definition of toroidal singularities because it is not local with respect to the \'etale topology.
  If $Y$ has strictly toroidal singularities, then it has $\LL$-rational singularities: one can immediately reduce to the toric case, which is
 straightforward.
  \end{enumerate}
\end{examples}

\begin{definition}
Let $X$ be a smooth and proper $K$-scheme, and let $\cX$ be an $R$-model of $X$. We say that $\cX$ is {\em $\LL$-faithful} if
$$[\cX_k]\equiv \Vol(X_{\pfi})\mod \LL.$$
\end{definition}

\begin{exam}
If $\cX$ is an snc-model of $X$ and $\cX_k$ is reduced, then $\cX$ is $\LL$-faithful. This follows immediately from the explicit expression
for $\Vol(X_{\pfi})$ in terms of $\cX$ in Corollary \ref{cor:vol}.
\end{exam}

\begin{prop}\label{prop:odp}
Let $X$ be a smooth proper $K$-scheme, and let $\cX$ be a regular $R$-model of $X$.
Assume that $\cX_k$ is reduced and has at most ordinary double points as singularities. Then $\cX$ is $\LL$-faithful.
\end{prop}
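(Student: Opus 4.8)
The plan is to make a degree two ramified base change, exhibit an explicit semistable model of $X$ by a single blow-up of the singular locus of the new total space, and then read off $\Vol(X_{\pfi})$ from it via Corollary \ref{cor:vol}.

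First I would record the local structure of $\cX$ along $(\cX_k)_{\mathrm{sing}}$. Since $\cX_k=V(t)$ is a Cartier divisor in the regular scheme $\cX$, it is a hypersurface singularity of embedding dimension $\dim\cX_k+1$ at each of its points, so Proposition \ref{prop:hypersurf}\eqref{it:q} applies: at a singular point $x$ one has $\widehat{\mathcal O_{\cX_k,x}}\cong\kappa(x)\llbr z_0,\dots,z_n\rrbr/(q)$ with $q$ an isotropic quadratic form of rank $d+1\ge 2$, where $d$ is the codimension of $(\cX_k)_{\mathrm{sing}}$ in $\cX_k$ at $x$. Lifting the $z_i$ and using regularity of $\cX$, one finds $\widehat{\mathcal O_{\cX,x}}\cong\kappa(x)\llbr z_0,\dots,z_n,t\rrbr/(q+t\,u)$ for a unit $u$, hence, writing $s=t^{1/2}$, that $\cX\times_R R(2)$ is Zariski-locally of the form $\Spec\kappa(x)\llbr z_0,\dots,z_n,s\rrbr/(q+s^2u)$ near $x$. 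Away from $(\cX_k)_{\mathrm{sing}}$ the morphism $\cX\to\Spec R$ is smooth, so $\cX\times_R R(2)$ is regular there; near $(\cX_k)_{\mathrm{sing}}$ this local model is a hypersurface whose singular locus $V(s,z_0,\dots,z_d)$ has codimension $d+1\ge 2$. Therefore $\cX\times_R R(2)$ is normal, so it equals its normalization $\cX(2)$, and in particular $\cX(2)_k=\cX_k$.

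Next, set $W=\cX(2)_{\mathrm{sing}}$; by the local description $W$ is smooth, it is contained in $\cX(2)_k$, and it is identified with $(\cX_k)_{\mathrm{sing}}$ under $\cX(2)_k=\cX_k$. Let $\cY=\mathrm{Bl}_W\cX(2)$, let $E$ be the exceptional divisor, and let $F$ be the strict transform of $\cX(2)_k$. Working in the blow-up charts of $\Spec\kappa(x)\llbr z_0,\dots,z_n,s\rrbr/(q+s^2u)$ along $V(s,z_0,\dots,z_d)$, I would check that $\cY$ is regular, that $\cY_k=F+E$ with $E$ of multiplicity one (since $s$ lies in the ideal of $W$ but not in its square), and that $F$ is smooth and meets $E$ transversally; thus $\cY$ is a semistable $R(2)$-model of $X\times_K K(2)$. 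By construction $F=\mathrm{Bl}_W\cX(2)_k$ is the resolution of the ordinary double points of $\cX(2)_k$, with exceptional divisor $Q:=F\cap E$ equal to the projectivized normal cone of $W$ in $\cX(2)_k$, a smooth quadric bundle over $W$; and $E$ is the projectivized normal cone of $W$ in $\cX(2)$, a smooth quadric bundle over $W$ of relative dimension $d\ge 1$ whose fibers carry a rational point.

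The conclusion is then a short computation. Since $\cY_k$ is reduced, all covers occurring in Corollary \ref{cor:vol} are trivial, so
$$\Vol(X_{\pfi})=\sum_{\emptyset\ne J}(1-\L)^{|J|-1}[E_J^o]\equiv\sum_{\emptyset\ne J}[E_J^o]=[\cY_k]\pmod{\L}.$$
On the other hand, the blow-up relations $[F]=[\cX(2)_k]-[W]+[Q]$ and $[\cY_k]=[F]+[E]-[Q]$ give $[\cY_k]=[\cX(2)_k]-[W]+[E]$, and since each fiber of $E\to W$ is a smooth quadric of positive dimension with a rational point, its class is congruent to $1$ modulo $\L$, so $[E]\equiv[W]$ modulo $\L$ by Lemma \ref{lemma:spreadK0}. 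Hence $[\cY_k]\equiv[\cX(2)_k]=[\cX_k]$ modulo $\L$, and combining with the previous display gives $\Vol(X_{\pfi})\equiv[\cX_k]$ modulo $\L$, i.e. $\cX$ is $\L$-faithful. The only step requiring genuine work is the verification in the third paragraph that the single blow-up $\mathrm{Bl}_W\cX(2)$ already yields a semistable model; this is the explicit chart computation indicated there, and everything else is formal.
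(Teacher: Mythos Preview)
Your proof is correct and follows essentially the same strategy as the paper: perform the ramified degree two base change, blow up the singular locus of $\cX(2)$ to obtain an snc-model with reduced special fiber, and then compare classes modulo $\LL$ using Lemma~\ref{lemma:spreadK0}. Your version is more explicit about the local equations and the decomposition $[\cY_k]=[\cX_k]-[W]+[E]$, whereas the paper applies Lemma~\ref{lemma:spreadK0} directly to the morphism $\cY_k\to\cX'_k$ (whose fibres are points over smooth points and the quadrics $Q$ over singular points); these are equivalent bookkeeping choices.
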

\begin{proof}
To compute the motivic volume $\Vol(X)$ we make a ramified degree two base change: we set $R'=R(2)$ and $K'=K(2)$  and we define $X' = X\times_K K'$, $\cX' = \cX \times_R R'$. Then $\cX'$ is normal but not regular, in general: it has an ordinary double point over each singular point of $\cX_k$
(this follows from the explicit equation in Proposition \ref{prop:hypersurf}\eqref{it:q}).
 Blowing up $\cX'$ at its singular locus, we obtain an snc-model $\cY$ of $X'$ with reduced special fiber $\cY_k$.
 The fiber of $\cY\to \cX'$ over each singular point $x$ of $\cX'$ is a smooth projective quadric $Q$ of dimension $\geq 1$ over the residue field $\kappa(x)$ with a rational point.
 For such a quadric $Q$, we have $[Q]\equiv 1$ modulo $\LL$ in $\gro(\Var_{\kappa(x)})$, and it follows that
 $$[\cY_k]\equiv [\cX'_k]\equiv [\cX_k]\mod \LL$$ in $\gro(\Var_k)$ by Lemma \ref{lemma:spreadK0}. Since $\Vol(X)\equiv [\cY_k]$ modulo $\LL$, we find that $\cX$ is $\LL$-faithful.
\end{proof}

The importance of $\LL$-rational singularities and $\LL$-faithful models lies in the following property.

\begin{prop}\label{prop:st-rat}
Let $X$ be a smooth and proper $K$-scheme, and let $\cX$ be an $\LL$-faithful $R$-model of $X$ such that $\cX_k$ is integral and has $\LL$-rational singularities.
 If
$\cX_{\pfi}$ is stably rational, then $\cX_k$ is stably rational.
\end{prop}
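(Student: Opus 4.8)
The plan is to combine three ingredients: the hypothesis that $\cX$ is $\LL$-faithful (so $[\cX_k]\equiv\Vol(\cX_{\pfi})\bmod\LL$); the criterion of Larsen and Lunts via the motivic volume (Theorem~\ref{thm:volrat}), which says that stable rationality of $\cX_{\pfi}$ forces $\Vol(\cX_{\pfi})\equiv 1\bmod\LL$; and the fact that $\cX_k$ has $\LL$-rational singularities, which lets us pass to a resolution without changing the class modulo $\LL$. Stringing these together gives $[\widetilde{\cX_k}]\equiv[\cX_k]\equiv 1\bmod\LL$ in $\gro(\Var_k)$ for a resolution $\widetilde{\cX_k}\to\cX_k$, and then Corollary~\ref{cor:LL} (applied to the smooth and proper variety $\widetilde{\cX_k}$) yields that $\widetilde{\cX_k}$ is stably rational, hence so is $\cX_k$, since stable rationality only depends on a dense open subset and resolution of singularities is an isomorphism over the smooth locus.

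In more detail, first I would fix a resolution of singularities $h\colon\widetilde{\cX_k}\to\cX_k$; since $\cX_k$ is integral, $\widetilde{\cX_k}$ is an integral smooth $k$-scheme, and after taking a compactification (or just noting $\cX_k$ is proper over $k$, being closed in the proper $R$-scheme $\cX$) we may assume $\widetilde{\cX_k}$ is also proper. Next I would invoke the hypothesis that $\cX_k$ has $\LL$-rational singularities: at each point $y$ of $\cX_k$ the fiber $h^{-1}(y)$ satisfies $[h^{-1}(y)]\equiv 1\bmod\LL$ in $\gro(\Var_{\kappa(y)})$ (using the remark after Definition~\ref{def:Lrat} that the defining property is independent of the chosen resolution), so Lemma~\ref{lemma:spreadK0} applies to $h$ and gives $[\widetilde{\cX_k}]\equiv[\cX_k]\bmod\LL$ in $\gro(\Var_k)$. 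Then I would use $\LL$-faithfulness to rewrite this as $[\widetilde{\cX_k}]\equiv\Vol(\cX_{\pfi})\bmod\LL$, and apply Theorem~\ref{thm:volrat}: since $\cX_{\pfi}$ is smooth, proper, and stably rational over $\pfi$, we get $\Vol(\cX_{\pfi})\equiv 1\bmod\LL$, hence $[\widetilde{\cX_k}]\equiv 1\bmod\LL$.

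Finally, I would conclude by Corollary~\ref{cor:LL}: $\widetilde{\cX_k}$ is a connected (as $\cX_k$ is integral, hence irreducible, and $h$ is birational and proper with connected target) smooth proper $k$-scheme whose class is congruent to the integer $1$ modulo $\LL$, so $\widetilde{\cX_k}$ is stably rational. Since $h$ is an isomorphism over the smooth locus of $\cX_k$, which is dense, $\widetilde{\cX_k}$ and $\cX_k$ are birational, and stable rationality is a birational invariant of reduced schemes; therefore $\cX_k$ is stably rational.

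The only subtlety to be careful about is that Corollary~\ref{cor:LL} and Theorem~\ref{thm:volrat} require smooth and \emph{proper} schemes, so one must ensure the resolution $\widetilde{\cX_k}$ is proper over $k$; this is automatic here because $\cX\to\Spec R$ is proper, so its special fiber $\cX_k$ is proper over $k$, and a resolution of a proper $k$-scheme can be taken proper by Hironaka. Beyond this bookkeeping, there is no real obstacle: the proof is essentially a concatenation of the already-established formalism (motivic volume as a ring homomorphism preserving congruences mod $\LL$, the Larsen--Lunts criterion, the spreading-out Lemma~\ref{lemma:spreadK0}, and the definitions of $\LL$-faithful model and $\LL$-rational singularity), and the "hard part" — the construction of the motivic volume and the explicit behaviour on snc-models — has been done upstream in Section~\ref{sec:vol}.
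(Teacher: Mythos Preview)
Your proposal is correct and follows essentially the same approach as the paper: use Theorem~\ref{thm:volrat} and $\LL$-faithfulness to get $[\cX_k]\equiv 1\bmod\LL$, pass to a resolution via the $\LL$-rational singularities hypothesis (and Lemma~\ref{lemma:spreadK0}), and conclude by Larsen--Lunts. Your write-up is simply more explicit about the bookkeeping (properness of the resolution, connectedness) than the paper's, which compresses all of this into four lines.
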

\begin{proof} If $\cX_{\pfi}$ is stably rational, then
$\Vol(\cX_{\pfi})\equiv 1$ modulo $\LL$ by Theorem \ref{thm:volrat}.
 Thus $[\cX_k]\equiv 1$ modulo $\LL$ by the definition of an $\LL$-faithful model, and
 $[Y]\equiv 1$ modulo $\LL$ for any resolution of singularities $Y\to \cX_k$
 by the definition of $\LL$-rational singularities. From the theorem of Larsen and Lunts (Theorem \ref{thm:LL}), we now deduce that $Y$, and thus $\cX_k$, are stably rational.
\end{proof}

\begin{theorem}\label{thm:nonrat}
Let $f\colon \cX \to C$ be a proper flat morphism with $\cX$, $C$ connected smooth $k$-schemes and $\dim(C) = 1$.
Assume that the geometric generic fiber of $f$ is stably rational. Then all geometric fibers  whose only singularities are ordinary double points have a stably rational irreducible component.
\end{theorem}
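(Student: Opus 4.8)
The plan is to reduce to a degeneration over a complete discrete valuation ring with algebraically closed residue field, to control the class of the special fibre modulo $\LL$ using Proposition~\ref{prop:odp}, and then to isolate a stably rational component by resolving the special fibre and invoking the theorem of Larsen and Lunts. First I would localize. Fix a geometric point $\overline c$ of $C$ based at a closed point $c$, with residue field $\ell=\kappa(c)$ and algebraic closure $\overline\ell$; we must show that the geometric fibre $\cX_{\overline c}=\cX_c\times_\ell\overline\ell$ has a stably rational irreducible component, where by hypothesis $\cX_{\overline c}$ is reduced and all of its singularities are ordinary double points in the sense of Definition~\ref{def:ordinary}. Replacing $C$ by $\Spec\widehat{\cO_{C,c}}\cong\Spec\ell\llbr t\rrbr$ and base changing $f$ along the morphism $\Spec\overline\ell\llbr t\rrbr\to C$, and using that the completion of an excellent local ring and the extension of a field to an algebraic closure are regular morphisms, one checks that after this base change the total space stays regular, flat and proper over $R:=\overline\ell\llbr t\rrbr$, the generic fibre over $K:=\overline\ell\llpar t\rrpar$ stays smooth and proper, and the special fibre is again $\cX_{\overline c}$; moreover the geometric generic fibre of $f$ becomes $\cX_{\pfi}$, which is stably rational because stable rationality is preserved under extensions of algebraically closed fields. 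So, renaming, we may assume that $k$ is algebraically closed, $R=k\llbr t\rrbr$, $\cX$ is a regular flat and proper $R$-scheme with $\cX_K$ smooth and proper and $\cX_{\pfi}$ stably rational, and $\cX_k$ is reduced with at most ordinary double points as singularities; we must produce a stably rational irreducible component of $\cX_k$.

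By Proposition~\ref{prop:odp} the model $\cX$ is $\LL$-faithful, so $[\cX_k]\equiv\Vol(\cX_{\pfi})\mod \LL$, and $\Vol(\cX_{\pfi})\equiv 1\mod \LL$ by Theorem~\ref{thm:volrat} since $\cX_{\pfi}$ is stably rational; hence $[\cX_k]\equiv 1\mod \LL$ in $\gro(\Var_k)$. Let $Z=(\cX_k)_{\mathrm{sing}}$, which is smooth and proper over $k$, and decompose $Z=Z_1\sqcup Z_2$, where $Z_1$ is the union of the connected components of $Z$ that have codimension one in $\cX_k$ (so that locally $\cX_k$ is the union of two smooth branches meeting transversally) and $Z_2$ the union of the remaining components. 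Let $h\colon\widetilde{\cX_k}\to\cX_k$ be the blow-up along $Z$; by the remark following Definition~\ref{def:ordinary} it is a resolution of singularities, with exceptional divisor $E$ the projectivized normal cone of $Z$ in $\cX_k$ --- a smooth quadric bundle over $Z$ admitting a section, which along $Z_1$ is the trivial degree-two cover and along $Z_2$ has relative dimension at least one. The scissor relations give $[\cX_k]=[\widetilde{\cX_k}]-[E]+[Z]$. Now $E|_{Z_1}\cong Z_1\sqcup Z_1$, so $[E|_{Z_1}]=2[Z_1]$; and along $Z_2$ each fibre of $E\to Z$ is a smooth quadric of positive dimension with a rational point, hence has class $\equiv 1\mod \LL$ (as in the proof of Proposition~\ref{prop:odp}), so $[E|_{Z_2}]\equiv[Z_2]\mod \LL$ by Lemma~\ref{lemma:spreadK0}. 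Substituting, $[\cX_k]\equiv[\widetilde{\cX_k}]-[Z_1]\mod \LL$, and therefore $[\widetilde{\cX_k}]\equiv 1+[Z_1]\mod \LL$.

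Finally I would apply the Larsen--Lunts morphism $\mathrm{sb}\colon\gro(\Var_k)\to\Z[\mathrm{SB}_k]$ of Theorem~\ref{thm:LL}, whose kernel is the ideal generated by $\LL$. Since $\widetilde{\cX_k}$ is smooth, its connected components coincide with its irreducible components, and these are the strict transforms $\widetilde W_1,\dots,\widetilde W_r$ of the irreducible components $W_1,\dots,W_r$ of $\cX_k$, each a non-empty connected smooth proper $k$-scheme birational to the corresponding $W_i$. Applying $\mathrm{sb}$ to $[\widetilde{\cX_k}]\equiv 1+[Z_1]\mod \LL$ yields
\[
\sum_{i=1}^{r}\mathrm{sb}(\widetilde W_i)=\mathrm{sb}(\Spec k)+\sum_{V}\mathrm{sb}(V)\qquad\text{in }\Z[\mathrm{SB}_k],
\]
where $V$ ranges over the connected components of $Z_1$. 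Comparing the coefficients of the basis element $\mathrm{sb}(\Spec k)$ --- all coefficients occurring being non-negative integers --- the left-hand side equals the number of indices $i$ with $\widetilde W_i$ stably rational, while the right-hand side is $1$ plus the number of stably rational components of $Z_1$, hence at least $1$. Therefore some $\widetilde W_i$ is stably rational, and then so is the birationally equivalent irreducible component $W_i$ of $\cX_k$, which proves the theorem.

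I expect the main obstacle to be the bookkeeping in the second paragraph: one must split off precisely the codimension-one part $Z_1$ of the singular locus and keep track of the factor $2$ by which it enters $[E]$. It is exactly the surviving term $+[Z_1]$ --- which records that two components of $\cX_k$ come together along $Z_1$ --- that is responsible for the conclusion being ``a stably rational irreducible component'' rather than ``$\cX_k$ is stably rational'', and one must verify that it cannot lower the coefficient of $\mathrm{sb}(\Spec k)$. A secondary technical point is the base-change step in the first paragraph, where one checks that regularity of the total space and smoothness of the generic fibre survive completion of $\cO_{C,c}$ and passage to the algebraic closure of the residue field.
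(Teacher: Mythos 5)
Your proof is correct, and its skeleton is the same as the paper's: reduce to a regular model over $R=k\llbr t\rrbr$ with $k$ algebraically closed, use Proposition~\ref{prop:odp} and Theorem~\ref{thm:volrat} to get $[\cX_k]\equiv 1 \bmod \LL$, resolve the special fibre without changing the class modulo $\LL$ except for an explicit correction term, and compare coefficients of $\mathrm{sb}(\Spec k)$ in $\Z[\mathrm{SB}_k]$. Where you differ is in the organization of the resolution. The paper first applies the scissor relations to the irreducible components, writing $[\cX_k]=\sum_i[Y_i]-\sum_{i<j}[Y_i\cap Y_j]$ (after recording that pairwise intersections are smooth of pure codimension one and that there are no triple intersections), and then resolves each $Y_i$ in two stages: the codimension-$\geq 2$ double points, which are $\LL$-rational and hence invisible modulo $\LL$, and then the codimension-one self-intersection locus $Z_i$, which contributes $+[Z_i]$. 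You instead perform a single blow-up of $\cX_k$ along its entire singular locus and split the exceptional divisor by the codimension of the centre, arriving at $[\widetilde{\cX_k}]\equiv 1+[Z_1]\bmod\LL$; your $[Z_1]$ aggregates the paper's $\sum_i[Z_i]+\sum_{i<j}[Y_i\cap Y_j]$, and the final coefficient count is identical (in both versions the correction term has the sign that can only raise the point-coefficient on the side of the components, which is exactly why one only gets \emph{a} stably rational component). Your variant is slightly cleaner in that it avoids enumerating the combinatorial properties of the components, at the cost of two points you should make explicit: (i) the connected components of $\widetilde{\cX_k}$ are precisely the strict transforms of the irreducible components of $\cX_k$ — this holds because $\widetilde{\cX_k}$ is smooth, so its irreducible components are disjoint, and each strict transform is irreducible as the closure of $h^{-1}(W_i\setminus Z)$; and (ii) the identification $E|_{Z_1}\cong Z_1\sqcup Z_1$ requires gluing the Zariski-local sections of Definition~\ref{def:ordinary} into a global splitting, which works because a finite \'etale double cover of an irreducible base that splits Zariski-locally splits globally. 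Both points are routine, so the argument stands.
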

 If $k$ is uncountable and algebraically closed, then we get an equivalent statement by replacing
``the geometric generic fiber'' by  ``a very general closed fiber,'' by Corollary \ref{cor:stabratlocus}.

\begin{proof}
By base change to an algebraic closure of $k$, we can reduce to the case where $k$ is algebraically closed. Let $s \in C$ be a closed point such that $f^{-1}(s)$ has at most ordinary double points as singularities.
 We identify $ \OO_{C,s}$ with $R$ by choosing a uniformizer $t$.
Then $\cX_R=\cX\times_{C}\Spec R$ is a regular model for $\cX_K=\cX\times_{C}\Spec K$
 whose special fiber is canonically isomorphic to $f^{-1}(s)$.
  The scheme $\cX_{\pfi}$ is obtained from the geometric generic fiber of $f$ by extension of scalars; thus it is stably rational.
 By Proposition \ref{prop:odp}, we know that $\cX$ is $\LL$-faithful, so that
 $$1\equiv \Vol(\cX_{K(\infty)})\equiv [\cX_k]\mod \LL.$$
  We cannot directly apply Proposition \ref{prop:st-rat} because $\cX_k$ does not have $\LL$-rational singularities if it is reducible. However, if we denote by $Y_1,\ldots,Y_r$ the irreducible components of $\cX_k$, then the following properties follow easily from the definition of an ordinary double point:
  \begin{itemize}
\item  each component $Y_i$ has at most ordinary double points as singularities;
\item any nonempty intersection of two distinct components $Y_i$ and $Y_j$ is smooth and of pure codimension one;
\item there are no triple intersection points, that is, the intersection of any three distinct components $Y_i$, $Y_j$ and $Y_\ell$ is empty.
\end{itemize}
  By the scissor relations in the Grothendieck ring, we have
  $$[\cX_k]=\sum_{i} [Y_i]-\sum_{i<j}[Y_i\cap Y_j],$$
  so that
  $$1\equiv \sum_{i} [Y_i]-\sum_{i<j}[Y_i\cap Y_j]\mod \LL.$$
For each component $Y_i$,  we denote by  $Y'_i\to Y_i$ the birational
  modification obtained by blowing up all the connected components of the singular locus of codimension at least $2$. All these singularities are resolved by the blow-up $Y'_i\to Y_i$, and they are $\LL$-rational by Example \ref{exam:Lrat}\eqref{it:Lratdouble}, so that
  $[Y'_i]\equiv [Y_i]$ modulo $\LL$ by Lemma \ref{lemma:spreadK0}. Denote by $Z_i$ the singular locus of $Y'_i$ (which now is empty or of pure codimension one), and by $Y''_i\to Y'_i$ the blow-up of $Y'_i$ along $Z_i$.
The schemes $Y''_i$ and $Z_i$ are smooth and proper over $k$, and $Y''_i$ is connected.  Lemma \ref{lemma:spreadK0} implies that $[Y''_i]=[Y'_i]+[Z_i]$.
 Thus, we find that
  \begin{equation}\label{eq:tree}
  1\equiv  \sum_{i} [Y''_i]-\sum_{i} [Z_i]-\sum_{i<j}[Y_i\cap Y_j]\mod \LL.
  \end{equation}
  By Theorem \ref{thm:LL}, this implies that at least one of the varieties $Y''_i$ is stably rational.
\end{proof}

\begin{remark}
A closer examination of the proof of Theorem \ref{thm:nonrat} reveals that all the schemes $Z_i$ are empty
 and the dual intersection graph $\Gamma$ of $\cY_k$ is a tree. Indeed, Theorem \ref{thm:LL} and Equation \ref{eq:tree} 
 imply that 
$$ 1 = \sum_{i} [Y''_i]-\sum_{i} [Z_i]-\sum_{i<j}\sum_{C\in \pi_0(Y_i\cap Y_j)}[C]$$
 in $\Z[\mathrm{SB}_k]$. Adding up the coefficients of nonzero terms on both sides, we get 
 $$1=\chi(\Gamma)-N$$ where $N$ is the number of nonempty schemes $Z_i$.
 Since $\Gamma$ is a connected graph, it follows that $N=0$ and $\Gamma$ is a tree.

Note that, in the statement of Theorem \ref{thm:nonrat}, the existence of one stably rational component is the best one can hope for: one can create stably irrational components by blowing up curves of positive genus in the fiber.
\end{remark}

\subsection{Applications}
\label{subsec:app}
We will discuss a few concrete applications to illustrate how our results can be used in practice.

\begin{theorem}\label{thm:hypers}
Assume that $k$ is algebraically closed. If there exists a single integral hypersurface $X_0 \subset \P^{n+1}_k$ of degree $d$
(resp. a degree $d$ cyclic covering $X_0 \to \P^n_k$)
with only isolated ordinary double points as singularities, and $X_0$ is not stably rational,
then a very general smooth hypersurface in $\P^{n+1}_k$ (resp.~cyclic covering of $\P^n_k$) of degree $d$  is not stably rational.
\end{theorem}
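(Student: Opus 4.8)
\textbf{Proof plan for Theorem \ref{thm:hypers}.}

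The plan is to reduce the stable irrationality of a very general smooth member of the family to the stable irrationality of the single bad member $X_0$, using a one-parameter degeneration and Proposition \ref{prop:st-rat} (or, since the fiber may be reducible after base change, the argument of Theorem \ref{thm:nonrat}). First I would set up the relevant family: let $S$ be the parameter space of hypersurfaces in $\Pro^{n+1}_k$ of degree $d$ (respectively the parameter space of degree $d$ cyclic coverings of $\Pro^n_k$), which is a smooth irreducible $k$-variety, and let $\cX \to S$ be the universal family. The point $[X_0]$ lies in $S$. Since $X_0$ is integral with only isolated ordinary double points, $X_0$ is smooth in codimension one, hence normal; in particular it is integral and the locus of members with at worst isolated ordinary double points is a nonempty (locally closed) subset of $S$ containing $[X_0]$.

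Next I would choose a general curve $C$ in $S$ passing through $[X_0]$ whose generic point maps to the open locus $S^{\sm}\subset S$ parametrizing smooth members. Concretely, pick a smooth curve $C$ and a morphism $C \to S$ (for instance a general line, or the normalization of a general curve through $[X_0]$) such that the image meets $S^{\sm}$ and passes through $[X_0]$; after possibly normalizing $C$ and shrinking, we may assume $C$ is a smooth connected curve, there is a closed point $s\in C$ with fiber $X_0$, and the geometric generic fiber of the pulled-back family $f\colon \cX_C \to C$ is a smooth hypersurface (resp.\ smooth cyclic covering). One subtlety is that $\cX_C$ may fail to be smooth at the finitely many nodes of $X_0$; but near such a node the total space is, after the degree-two base change of Proposition \ref{prop:hypersurf}\eqref{it:q} and a blow-up, an snc-model, and more simply one checks directly that $\cX_C$ is regular at these points for a sufficiently general choice of $C$ (the total space of the universal family is smooth, and a general one-dimensional slice through a node of a fiber stays in the smooth locus of the total space while acquiring at worst a node in the special fiber). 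So we may take $f\colon \cX_C \to C$ proper flat with $\cX_C$, $C$ connected smooth, $\dim C = 1$, and the special fiber over $s$ equal to $X_0$, which has only ordinary double points.

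Now I would apply Theorem \ref{thm:nonrat} (or, when $X_0$ has isolated nodes so the special fiber is irreducible, directly Proposition \ref{prop:st-rat} together with Proposition \ref{prop:odp} and Example \ref{exam:Lrat}\eqref{it:Lratdouble}): if the geometric generic fiber of $f$ were stably rational, then $X_0 = f^{-1}(s)$ would be stably rational (it is irreducible, so ``has a stably rational irreducible component'' means $X_0$ itself is stably rational). This contradicts the hypothesis that $X_0$ is not stably rational. Hence the geometric generic fiber of $f$, which is a very general geometric point of the smooth locus $S^{\sm}$, is not stably rational. Finally, since stable rationality is insensitive to extensions of algebraically closed fields and to passing between a variety and its base change, and since $k$ is uncountable would not even be needed here because we phrased the conclusion for the geometric generic point, I would translate this back: by Corollary \ref{cor:stabratlocus} the stably rational locus $S_{\mathrm{sr}}\subset S$ is a countable union of closed subsets, and since it does not contain the generic point of $S^{\sm}$, it is a countable union of proper closed subsets of $S$; a very general member of $S^{\sm}$ therefore lies outside $S_{\mathrm{sr}}$, i.e.\ is not stably rational. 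The main obstacle is the verification that one can choose the slicing curve $C$ so that the total space $\cX_C$ is regular (equivalently, that the node-degeneration persists as a regular one-parameter degeneration); this is where one uses generality of $C$ and the explicit local description of ordinary double points from Proposition \ref{prop:hypersurf}, and it is the only place real geometric input beyond the machinery already developed is required.
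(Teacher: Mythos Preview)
Your approach is essentially the paper's: build a regular one-parameter degeneration to $X_0$, apply Theorem~\ref{thm:nonrat}, and conclude via Corollary~\ref{cor:stabratlocus}. The paper is more concrete about the curve: rather than slicing the parameter space by a general curve, it writes down the explicit pencil $F_0 + tF_1 = 0$ in $\Pro^{n+1}_k \times_k \A^1_k$ for a general smooth $X_1=\{F_1=0\}$ and verifies regularity of the total space directly via the Jacobian criterion and Bertini's theorem. For cyclic coverings it first builds such a regular pencil of \emph{branch divisors} and then passes to the cyclic cover of $\Pro^n_k\times_k\A^1_k$ ramified along it; you do not treat this case separately, though your universal-family argument would extend with a little care.

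One small slip in your final paragraph: the geometric generic fiber of your $f\colon \cX_C\to C$ is not ``a very general geometric point of $S^{\sm}$'' but rather the fiber over a single specific point of $S^{\sm}$ (the image of the generic point of $C$), which has positive codimension in $S$. To pass from there to the generic point of $S^{\sm}$ you must invoke the contrapositive of the last sentence of Corollary~\ref{cor:stabratlocus}, applied to the universal smooth family over $S^{\sm}$: since \emph{some} point of $S^{\sm}$ lies outside $S_{\mathrm{sr}}$, so does the generic point. The paper sidesteps this by citing Corollary~\ref{cor:stabratlocus} at the very start, reducing the whole problem to exhibiting a single smooth member that is not stably rational.
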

\begin{proof}
By Corollary \ref{cor:stabratlocus}, it suffices to find one smooth hypersurface in $\P^{n+1}_k$ (resp.~cyclic covering of $\P^n_k$) of degree $d$ that is not stably rational.
 We will apply Theorem \ref{thm:nonrat} to families $\cX \to C$ of varieties as in the statement over a
connected smooth $k$-curve $C$ such that one of the fibers is isomorphic to $X_0$ and the total space $\cX$ is regular.

We first consider the case of hypersurfaces. Let $X_1$ be a smooth hypersurface of degree $d$.
Let $F_0,F_1 \in H^0(\P^{n+1}_k, \cO(d))$ be equations for $X_0$ and $X_1$.
Consider the pencil $\cX\subset \P^{n+1}_k \times_k \A^1_k$ defined by the equation
\[
F_0 + t F_1 = 0.
\]
 Using the Jacobian criterion, one sees that $\cX$ is regular if $X_0$ and $X_1$ are not
tangent along their intersection. Since $X_0$ has isolated singularities, this will hold for general $X_1$ by Bertini's Theorem.
%Varying $X_1$, the result follows.

The case of cyclic coverings is quite similar. Let $D_0 \subset \P^n_k$ be the ramification divisor of $X_0 \to \P^n_k$.
 If $X_0$ has only isolated ordinary double points as singularities, then $D_0$ satisfies the same property.
 We embed $D_0$ in a pencil of hypersurfaces $\cD \subset \P^n_k \times_k \A^1_k$ with regular total space, as above.
Then the degree $d$ covering
ramified in $\cD \subset \P^n_k \times_k \A^1_k$ is a regular scheme such that one of the closed fibers over $\A^1_k$ is isomorphic to $X_0$.
\end{proof}

\begin{example}\label{ex:qdsolid}
The Artin-Mumford quartic double solid $X_0$, one of the first
examples of non-rational unirational varieties \cite{AM}, has $10$ isolated ordinary double points, and is not stably rational because it has non-vanishing unramified Brauer group.
It follows that very general smooth quartic double solids are not stably rational either. This has already been proved by Voisin \cite{Voisin} using the degeneration method for Chow groups of zero-cycles
and the same degeneration $X_0$. A similar argument applies to a smooth sextic double solid \cite{Beauville-stab}.
\end{example}

We now illustrate how our results can be applied in the case when the central fiber $X_0$ has non-isolated singularities which are more complicated than ordinary double points.
% We will need an auxiliary lemma.
  We will show that very general smooth quartic threefolds are not stably rational; this result was first proven by Colliot-Th\'el\`ene and Pirutka in \cite{CTP-stab} by means of a refinement of Voisin's specialization method.

 Assume that $k$ is algebraically closed. We will construct an appropriate strict normal crossings
degeneration where one of the components of the central fiber is birational to Huh's quartic  $X_0 \subset \P^4_k$ \cite[Definition 4]{Huh}, and apply Theorem \ref{thm:dual}.
 By construction, $X_0$ has $9$ isolated ordinary double points and is also singular along a line $L \subset X_0$.
The singularities of $X_0$ along $L$ are quadratic; however, the rank of the quadric normal cone drops along $L$, so that these singularities do not satisfy Definition \ref{def:ordinary}.
 It is shown in \cite{Huh} that $X_0$ is not stably rational, has $\LL$-rational singularities\footnote{Huh's argument also applies to our more restrictive definition of $\LL$-rational singularities because, in his notation, the conic bundle $S\to L$ has a section.} and, hence, satisfies $[X_0]\not\equiv 1$ modulo $\LL$.
% The singularities of $X_0$ are resolved by blowing up the line $L$ and the $9$ isolated ordinary double points, and the exceptional locus of the blow-up of $L$ is a smooth rational surface.

Similarly to the proof of Theorem \ref{thm:hypers}, we take $\cX$ to be the subscheme of  $\P^4_R$ defined by $F_0 + tF_1 = 0$, where $F_0=0$ is an equation for $X_0$ and
$F_1=0$ defines a general smooth quartic $X_1 \subset \P^4_k$.
%\evgeny{In fact when we say that $X_1$ is general we only require that $X_1$ intersects $L$ in four distinct points which do not coincide with the points where the quadric normal %cone $C_{L/X}$
%drops its rank and that $X_1$ and does not pass through isolated singularities of $X_0$.}

\begin{prop}\label{prop:quart-model}
The variety $\cX_{\pfi}$ is not stably rational.
\end{prop}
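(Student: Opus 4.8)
The plan is to argue by contradiction: assume $\cX_{\pfi}$ is stably rational, construct a strict normal crossings model of the generic fibre $X=\cX_K$ over $R$ in which one component of the special fibre is a resolution $Y_0\to X_0$ of Huh's quartic, and then obtain a contradiction from Theorem~\ref{thm:dual} together with the two facts recalled from \cite{Huh}: that $X_0$ has $\LL$-rational singularities, and that $[X_0]\not\equiv 1$ modulo $\LL$. For general $X_1$ the generic fibre $X$ is smooth and proper over $K$ (since $\nabla F_0$ vanishes along $\mathrm{Sing}(X_0)$ while $\nabla F_1$ does not), so Theorem~\ref{thm:dual} will apply once an snc-model is in hand.

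First I would analyse the total space $\cX\subset\P^4_R$. A direct computation with the Jacobian criterion shows that, for general $X_1$, the singular locus of $\cX$ consists exactly of the finitely many points where $X_1$ meets $L$: at a general point of $L$ the rank-three quadratic singularity of $X_0$ is smoothed out in $\cX$ (locally one may solve for $t$), whereas at each point of $L\cap X_1$ the leading form of a local equation is, after a linear change of coordinates, a rank-three quadric in the directions transverse to $L$ plus a nondegenerate rank-two quadric in the two remaining directions, so $\cX$ has an ordinary (rank-five) double point there; in particular $\cX$ is regular away from the special fibre and at the nine nodes of $X_0$. I would then pass to an snc-model by a sequence of admissible blow-ups: resolve the ordinary double points of $\cX$ lying on $L$; blow up the strict transform of $L$, now a smooth curve in a smooth fourfold; and blow up the smooth points of $\cX$ over the nine nodes of $X_0$. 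The strict transform $E_0$ of $X_0$ through all these modifications is a resolution $Y_0\to X_0$ — blowing up $L$ resolves the singularity along $L$ and the remaining blow-ups resolve the nine nodes — hence $E_0$ is smooth; the other components of the special fibre are exceptional divisors, each a projective or quadric bundle over a rational base. The only components of multiplicity greater than $1$ are the exceptional divisor over $L$ and those over the nine nodes, and these are pairwise disjoint; consequently $N_J=1$ for every $J$ with $|J|\ge 2$, so all the covers $\widetilde{E}_J^o$ with $|J|\ge 2$ coincide with the strata $E_J^o$.

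Next I would verify the hypothesis of Theorem~\ref{thm:dual}: that every connected component of $E_J^o$ with $|J|\ge 2$ is stably rational. These strata are the loci where exceptional components meet one another or meet $E_0$, and in each case one recognises an open subset of a smooth conic or quadric bundle (or a projective bundle) over a rational base. In particular $E_0\cap E_L$ is Huh's conic bundle $S\to L\cong\P^1$, which has a section and is therefore a rational surface; the intersections of $E_0$ with the exceptional quadric threefolds over $L\cap X_1$ and with the exceptional projective spaces over the nodes are smooth quadrics. Since $k$ is algebraically closed, all these strata are rational. Granting this, Theorem~\ref{thm:dual} applies to our snc-model: as $X_{\pfi}=\cX_{\pfi}$ is assumed stably rational, every cover $\widetilde{E}_i^o$ has only stably rational connected components. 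Taking $i$ to be the index of $E_0$ and using $N_{\{i\}}=1$, so $\widetilde{E}_i^o=E_0^o$, we conclude that $E_0^o$, hence the smooth proper variety $E_0=Y_0$, is stably rational; by Theorem~\ref{thm:LL} this means $[Y_0]\equiv 1$ modulo $\LL$. But $X_0$ has $\LL$-rational singularities, so $[Y_0]\equiv[X_0]$ modulo $\LL$ by Lemma~\ref{lemma:spreadK0}, and $[X_0]\not\equiv 1$ modulo $\LL$; this contradiction shows that $\cX_{\pfi}$ is not stably rational.

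The main obstacle is the explicit construction and verification of the snc-model. One must check that the indicated blow-ups really produce a strict normal crossings divisor — the delicate cases being the behaviour near $L\cap X_1$, where the singular points of $\cX$ lie on the very curve being blown up, and near the nine nodes — and, above all, that all the higher-codimension strata $E_J^o$ are rational. It is here that the fine structure of Huh's quartic enters essentially, through the fact (cf.\ the footnote) that the conic bundle $S\to L$ has a section; without control of this bundle, the component over $L$ and the stratum $E_0\cap E_L$ could fail to be rational and Theorem~\ref{thm:dual} would not deliver the conclusion. The stable irrationality and $\LL$-rationality of the singularities of $X_0$ itself are the other essential inputs, but these are taken from \cite{Huh} as a black box.
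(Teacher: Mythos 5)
Your proposal is correct and follows essentially the same route as the paper: the same pencil, the same sequence of blow-ups (the four ordinary double points of $\cX$ at $L\cap X_1$, then the proper transform of $L$, then the nine nodes), the same identification of the multiplicity-two components and of the strata $E_J^o=\widetilde{E}_J^o$ as rational, and the same appeal to Theorem \ref{thm:dual} combined with Huh's results that $X_0$ has $\LL$-rational singularities and $[X_0]\not\equiv 1 \bmod \LL$. The contradiction framing and the explicit final use of Lemma \ref{lemma:spreadK0} are only cosmetic differences from the paper's direct application of the contrapositive.
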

\begin{proof}
The total space $\cX$ is nonsingular outside four isolated ordinary double points $P_1$, $P_2$, $P_3$, $P_4$ obtained as the intersection $X_1 \cap L$. Since $F_1$ is general, the points $P_i$ are general points on $L$, so that we may assume that they are ordinary double points of $X_0$. Thus we can use Proposition \ref{prop:hypersurf} to describe the formal structure of $X_0$ at the points $P_i$ and compute the blow-up of $X_0$ at these points.

 Let $\cY\to \cX$ denote the blow-up of the points $P_i$, followed by the blow-up of the proper transform $L'$ of $L$ and the blow-up of the nine isolated
ordinary double points of $X_0$.
Then $\cY$ is a regular model whose central fiber is the union of the components $X'_0$, $Q_1, \dots, Q_4$, $V_1, \dots, V_9$ and $E$.
 Here $X'_0$ is obtained from $X_0$ by successively blowing up the points $P_i$, the curve $L'$, and the nine isolated double points.
The $Q_i$ and $V_j$ are smooth rational threefolds that lie above the points $P_i$ and the $9$ isolated double points of $X_0$, respectively.
 The component $E$ is the exceptional divisor of the blow-up of $L'$; thus $E$ is a smooth $\P^2_k$-bundle over $L'\cong \P^1_k$.

%The divisors $Q_i$ are disjoint, they intersect the main component $X_0$ along two-dimensional smooth quadrics $\ol{Q_i}$.
 A local computation shows that $\cY$ is an snc-model for $\cX_K$ and the components of $\cY_k$ have multiplicity one, except for the $V_j$ and $E$ which have multiplicity two. 
    The surfaces $E\cap Q_i$ are fibers of the bundle $E\to L'$ for all $i$, and thus isomorphic to $\P^2_k$.
 The exceptional surfaces $Q_i\cap X_0'$ and $V_j\cap X_0'$ in $X_0'$ are smooth quadric surfaces, for all $i$ and $j$, and $E\cap X_0'$ is a quadric bundle over $L'$ (see also Lemma 9 in \cite{Huh} for a description away from the points $P_i$). The only non-empty triple intersections in the special fiber of $\cY$ are the curves
 $V_j\cap E\cap X_0'$ for $j\in \{1,\ldots,4\}$. These curves are smooth fibers of the quadric bundle $E\cap X_0'\to L'$, and thus isomorphic to $\P^1_k$.

It follows that, whenever $C$ is a non-empty intersection of at least two distinct components in $\cY_k$, then
 $C$ is rational.
  Furthermore, $\widetilde{C}^o=C^o$ because $C$ is always contained in a component of multiplicity one.
% Let $\cY$ be the normalization of $\wt{\cX}\times_R R[\sqrt{t}]$.
% The dual intersection complex of $\cY_k$ is contractible, and the double covers $\widetilde{V}_j^o$ and $\widetilde{E}^o_0$ are rational because
% $X_0$ has ordinary double points at the images of $V_j$ and the generic point of $L$.
 Thus by Theorem \ref{thm:dual}, $\cX_{\pfi}$ is not stably rational, because the component $X'_0$ of $\cY_k$ is not stably rational.
\end{proof}
\begin{remark}\label{rem:quart-faithful}
With a little more work, one can show that the model $\cX$ is $\LL$-faithful: the double covers $\widetilde{V}_j^o$ and $\widetilde{E}^o_0$ are rational because
 $X_0$ has ordinary double points at the images of $V_j$ and the generic point of $L$. A direct computation now shows that
 $\Vol(\cX_{\pfi})\equiv [X_0]$ modulo $\LL$.
\end{remark}

\begin{theorem}[Colliot-Th\'el\`ene -- Pirutka, Theorem 1.17 in \cite{CTP-stab}; see also \cite{HT16}]\label{thm:quartics}
Very general smooth quartic threefolds over $k$ are not stably rational.
\end{theorem}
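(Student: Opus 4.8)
The plan is to deduce Theorem~\ref{thm:quartics} directly from Proposition~\ref{prop:quart-model} together with Corollary~\ref{cor:stabratlocus}. The content of Proposition~\ref{prop:quart-model} is that the specific degeneration $\cX \subset \Pro^4_R$ cut out by $F_0 + tF_1 = 0$, with $F_0$ an equation for Huh's quartic $X_0$ and $F_1$ a general smooth quartic, has geometric generic fiber $\cX_{\pfi}$ that is not stably rational. The remaining work is purely a spreading-out argument: transporting this single non-stably-rational fiber to the statement about \emph{very general} smooth quartic threefolds.

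First I would set up the universal family. Let $C$ be the affine line $\A^1_k = \Spec k[t]$, or a suitable open subset thereof, and let $f\colon \cX \to C$ be the pencil above, which by the first paragraph of the proof of Proposition~\ref{prop:quart-model} is a proper flat morphism with $\cX$ regular away from four ordinary double points (and in any case $\cX_K$ smooth). Shrinking $C$ around a point $s$ other than $0$, we may take $f$ to be smooth, so that $\cX_{\pfi}$ is obtained from a smooth quartic threefold over $k(C)^a$ by extension of scalars. Since stable rationality is insensitive to extension of algebraically closed base fields, the geometric generic fiber of the smooth family $f|_{C \setminus \{0\}}$ is not stably rational. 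Alternatively, and more cleanly, one applies Corollary~\ref{cor:stabratlocus} in the contrapositive: if the very general smooth quartic threefold \emph{were} stably rational, then the geometric generic fiber of the universal family of smooth quartics would be stably rational, hence so would every geometric fiber of every smooth family obtained by base change --- in particular $\cX_{\pfi}$ for the pencil above, contradicting Proposition~\ref{prop:quart-model}.

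To make this last reduction precise I would work with the parameter space $\mathbb{P}(H^0(\Pro^4_k,\cO(4)))$ and the open subscheme $U$ parametrizing smooth quartics, with its universal family $g\colon \mathcal{Q} \to U$, which is smooth and proper. By Corollary~\ref{cor:stabratlocus} the locus $U_{\mathrm{sr}}$ of points with stably rational geometric fiber is a countable union of closed subsets of $U$; as $k$ is (we may assume, by base change to an algebraic closure and noting this is harmless for ``very general'') algebraically closed and $U$ is irreducible, ``$U_{\mathrm{sr}} = U$'' is equivalent to ``$U_{\mathrm{sr}}$ contains the generic point'' is equivalent to ``a very general closed point of $U$ lies in $U_{\mathrm{sr}}$''. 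The pencil $t \mapsto [F_0 + tF_1]$ gives a morphism from an open subset of $\A^1_k$ to $\mathbb{P}(H^0(\Pro^4_k,\cO(4)))$ landing in $U$ for $t \neq 0$, and pulling back $g$ along it recovers $\cX_K$ over the punctured line. If $U_{\mathrm{sr}} = U$, this pullback family has all geometric fibers stably rational, contradicting Proposition~\ref{prop:quart-model}. Hence $U_{\mathrm{sr}} \neq U$, and since it is a countable union of proper closed subsets, the very general smooth quartic threefold is not stably rational.

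\textbf{The main obstacle} is essentially bookkeeping rather than mathematics: one must be slightly careful that the pencil genuinely meets the smooth locus $U$ (true since $X_1$ is smooth, so the fiber at a general $t$ is smooth), and that Corollary~\ref{cor:stabratlocus} is being applied with the correct base --- it is stated for a Noetherian $\Q$-scheme $S$ with a smooth proper morphism, and $U$ with its universal family qualifies. There is no hard new input; the entire weight of the theorem rests on Proposition~\ref{prop:quart-model}, whose proof in turn rests on Theorem~\ref{thm:dual} and Huh's computation that $[X_0] \not\equiv 1 \bmod \LL$. One could also phrase the argument so as to invoke Corollary~\ref{cor:verygen} instead, at the cost of assuming $k$ uncountable, but routing through Corollary~\ref{cor:stabratlocus} avoids even that hypothesis.
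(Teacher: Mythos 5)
Your argument is correct and is essentially the paper's own proof: both deduce the theorem from Proposition~\ref{prop:quart-model} by applying Corollary~\ref{cor:stabratlocus} to the universal family over the open locus of smooth quartics in $\Pro(H^0(\Pro^4_k,\cO(4)))$, and then observing that the stably irrational geometric fiber produced by the pencil shows the stably rational locus is a proper (countable union of) closed subset(s). The extra bookkeeping you do with the pencil and base change is harmless but not needed beyond what the paper records.
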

\begin{proof}
Proposition \ref{prop:quart-model} yields the existence of
 a stably irrational smooth quartic threefold $V$ over an algebraically closed extension $k'$ of $k$.
 % The variety $V$ is defined over a finitely generated extension of $\Q$, which we can embed in $k$; thus we may assume that $V$ is defined over $k$.
  Let $S$ be the open subscheme of $\P(H^0(\P^4_k,\mathcal{O}(4)))$ parameterizing smooth quartic threefolds.
 By Corollary \ref{cor:stabratlocus}, the subset of $S$ parameterizing geometrically stably rational smooth quartic threefolds is a countable union of closed subsets. Its complement is non-empty because it has a $k'$-point parameterizing the variety $V$.
\end{proof}

%\begin{remark}
%As recently showed by Hassett and Tschinkel, three-dimensional quartics do degenerate to non-stably rational varieties with isolated ordinary double points \cite[6.2]{HT16}.
%However we chose to present the analysis of non-isolated quadratic singularities to demonstrate how our method would apply in this case.
%\end{remark}

\appendix
\section{Existence of the motivic volume and motivic reduction morphisms}\label{sec:WF}
In this section, we give a detailed proof of the existence of the motivic volume and motivic reduction morphisms in the case where the residue field $k$ is not necessarily algebraically closed, thus completing the proofs of Theorem \ref{thm:vol} and Proposition \ref{prop:red}. Our main tool will be the Weak Factorization Theorem, which we will use in the form of Theorem \ref{thm:WF}.
  A similar strategy is used in \cite{KT} to prove the existence of their specialisation maps for birational types.

\subsection{Weak factorization}
   If $X$ is a smooth and proper $K$-scheme and $\cX$ is a snc-model of $X$, then an {\em elementary blow-up} of $\cX$ is a
  blow-up $\cX'\to \cX$ whose center is a connected regular closed subscheme $Z$ of $\cX_k$ that has strict normal crossings with the special fiber $\cX_{k}$. The strict normal crossings condition means that, Zariski-locally around every point of $Z$, we can find an effective divisor $D_0$ on $\cX$ such that $D=D_0+\cX_k$ has strict normal crossings and $Z$ is an intersection of irreducible components of  $D$. This property implies that $\cX'$ is again an snc-model of $X$.

 \begin{theorem}\label{thm:WF}
 Let $X$ be a smooth and proper $K$-scheme. Let $\mathcal{I}$ be an invariant of snc-models of $X$, that is,
 a map from the set of isomorphism classes of snc-models of $X$ to some set $A$. Assume that, for every snc-model $\cX$ of $X$ and
  every elementary blow-up $\cX'\to \cX$, we have $\mathcal{I}(\cX)=\mathcal{I}(\cX')$. Then the invariant $\mathcal{I}$ takes the same value on all snc-models of $X$.
 \end{theorem}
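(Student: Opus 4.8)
The plan is to derive Theorem~\ref{thm:WF} from the Weak Factorization Theorem \cite{WF}, used in its version ``with boundary'': on each model the boundary divisor will be the special fiber. Let $\cX_1$ and $\cX_2$ be two snc-models of $X$. Since both carry an isomorphism with $X$ on their generic fibers, the identity of $X$ determines a birational map $\phi\colon\cX_1\dashrightarrow\cX_2$ of regular, flat, proper $R$-schemes which is an isomorphism over the dense open $X=(\cX_1)_K=(\cX_2)_K$. I would apply weak factorization to $\phi$, taking as boundary divisors the strict normal crossings divisors $(\cX_1)_k$ and $(\cX_2)_k$. This yields a chain of birational maps
$$\cX_1=\cY_0\dashrightarrow\cY_1\dashrightarrow\cdots\dashrightarrow\cY_n=\cX_2$$
in which every $\cY_i$ is a regular $R$-model of $X$ with strict normal crossings special fiber (hence again an snc-model of $X$), every arrow is either a blow-up $\cY_i\to\cY_{i+1}$ or the inverse of one, the center of each such blow-up is smooth and meets the relevant special fiber with strict normal crossings, and all the maps in the chain restrict to the identity on $X$.

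The last property forces each blow-up center to be contained in a special fiber, since a blow-up is an isomorphism away from its center; decomposing centers into connected components, each arrow of the chain is therefore an elementary blow-up or the inverse of one. (Verifying that ``connected, regular, contained in $\cX_{i,k}$, with strict normal crossings with it'' is exactly the condition defining an elementary blow-up is a routine local coordinate check, which I would include but not dwell on.) The hypothesis on $\mathcal{I}$ then gives $\mathcal{I}(\cY_i)=\mathcal{I}(\cY_{i+1})$ for every $i$ --- directly when $\cY_i\dashrightarrow\cY_{i+1}$ is an elementary blow-up, by symmetry when it is the inverse of one. Composing along the chain gives $\mathcal{I}(\cX_1)=\mathcal{I}(\cX_2)$, as desired.

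The step I expect to be the main obstacle is the use of weak factorization itself: it is classically formulated for varieties over a field of characteristic zero, while here the base $R=k\llbr t\rrbr$ is a complete discrete valuation ring and the models $\cX_i$ are regular but not of finite type over $k$. I would handle this either by appealing to a version of the Weak Factorization Theorem valid for regular quasi-excellent schemes of equicharacteristic zero, which applies directly to the $R$-models in question, or by a spreading-out argument that replaces $\Spec R$ by a small neighbourhood of a closed point on a suitable smooth $k$-curve and then invokes the classical statement over $k$. Once weak factorization is available in the form stated above, everything else is formal.
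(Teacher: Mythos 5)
Your overall strategy coincides with the paper's: both reduce the statement to a weak factorization theorem valid for regular quasi-excellent schemes in characteristic zero (the paper cites Abramovich--Temkin), and you correctly identify the passage from varieties over $k$ to schemes over $R=k\llbr t\rrbr$ as the crux. The formal part of your argument --- the centers are forced into the special fibers because every map in the chain restricts to the identity on $X$, hence each step is an elementary blow-up or its inverse --- is exactly how the paper concludes.

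The step you pass over quickly (``apply weak factorization to $\phi$'') is, however, where the one piece of genuine work in the paper's proof sits. The quasi-excellent version of weak factorization is available for \emph{admissible blow-ups}, i.e.\ projective birational morphisms obtained by blowing up an ideal containing a power of $t$ --- not for an arbitrary proper birational map between two $R$-models. So before the factorization theorem can be invoked, one must dominate both snc-models $\cX$ and $\cY$ by a common snc-model $\cW$ such that $\cW\to\cX$ and $\cW\to\cY$ are themselves admissible blow-ups. The paper achieves this by taking the schematic closure $\cZ$ of the diagonal in $\cX\times_R\cY$ and applying Raynaud--Gruson flattening twice, followed by resolution of singularities, using that a composition of admissible blow-ups is again an admissible blow-up; the projections from the graph closure alone are proper birational but need not be blow-ups without this flattening step. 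Your alternative of spreading out to a neighbourhood on a $k$-curve is also delicate, since an $R$-model need not descend to (the local ring of) a curve, and the paper avoids it entirely. With the domination-by-admissible-blow-ups step supplied, your argument closes and is the paper's.
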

 \begin{proof}
 For every $R$-model $\cX$ of $X$, an {\em admissible blow-up} is a blow-up $\cX'\to \cX$ at a coherent ideal sheaf that contains $t^{n}$,  for some positive integer $n$; thus it induces an isomorphism on the generic fibers.  These are called $X$-admissible blow-ups in \cite{flattening}.
   A composition of admissible blow-ups is again an admissible blow-up, by  \cite[5.1.4]{flattening}.

 Let $\cX$ and $\cY$ be snc-models of $X$. We will prove that $\mathcal{I}(\cX)=\mathcal{I}(\cY)$.
  We define $\cZ$ to be the schematic closure of the diagonal of $X\times_K X$ in $\cX\times_R \cY$. By \cite[5.7.12]{flattening}, there exists
  an admissible blow-up $\cZ'\to \cZ$ such that the induced morphism $\cZ'\to \cX$ is also an admissible blow-up. Likewise, there exists  an admissible blow-up $\cZ''\to \cZ'$ such that $\cZ''\to \cY$ is an admissible blow-up. By Hironaka's resolution of singularities, we can find an admissible blow-up $\cW\to \cZ''$ such that $\cW$ is an snc-model of $X$. The morphisms $\cW\to \cX$ and $\cW\to \cY$ are still admissible blow-ups.
By the Weak Factorization Theorem for admissible blow-ups of quasi-excellent schemes of characteristic zero \cite[1.2.1]{AbTe}, we can now connect $\cX$ and $\cY$ by a chain of elementary blow-ups and blow-downs. Thus $\mathcal{I}(\cX)=\mathcal{I}(\cY)$.
 \end{proof}

 The main technical difficulty in the proofs of the existence of the motivic volume and motivic reduction maps is to organize the local computations of various blow-ups in an efficient way. A convenient tool for this purpose is the language of logarithmic geometry, which essentially allows us to reduce to the case of toric varieties, where combinatorial arguments can be used. This avoids tedious calculations in local coordinates (especially when dealing with the covers $\widetilde{E}_J^o$ in the formula for the motivic volume). The main technical results we will need are already present in \cite{BuNi}.

 For a brief introduction to logarithmic geometry, we refer to \cite{kato-log}. In combination with the overview of regular log schemes in Section 3 of \cite{BuNi}, this provides all the necessary background material for the arguments presented below. All log structures in this paper are defined with respect to the Zariski topology, as in \cite{kato}. Following the conventions in \cite{BuNi}, we will speak of regular log schemes and smooth morphisms of log schemes instead of log regular log schemes and log smooth morphisms. Log schemes will be denoted by symbols of the form $\loga{X}$; the underlying scheme of $\loga{X}$ is denoted by $\cX$.

\subsection{Smooth log schemes}
 Let $\cX$ be a normal flat $R$-scheme of finite type, and let $D$ be a reduced Weil divisor on $\cX$.
 The divisor $D$ gives rise to a logarithmic structure on $\cX$, called the {\em divisorial log structure} induced by $D$.
We denote the resulting log scheme by $\loga{X}$, and we call it the log scheme associated with the pair $(\cX,D)$. The sheaf of monoids defining the log structure on $\loga{X}$ is given by
$$\mathcal{M}_{\loga{X}}=j_*\mathcal{O}^{\times}_{\cX\setminus D}\cap \mathcal{O}_{\cX}$$ where $j$ is the open embedding of $\cX\setminus D$ into $\cX$. Thus $\mathcal{M}_{\loga{X}}$ is the sheaf of functions on $\cX$ that are invertible on the complement of $D$.

 In all the examples of interest in this paper, the log scheme $\loga{X}$ will be {\em regular} in the sense of \cite{kato}. This notion formalizes the geometric idea that $(\cX,D)$ is a strictly toroidal pair. The local toroidal structure is encoded in the {\em characteristic sheaf} of $\loga{X}$, which is defined as
 $\mathcal{C}_{\loga{X}}=\mathcal{M}_{\loga{X}}/\mathcal{O}_{\cX}^{\times}$. For every point $x$ of $\cX$, we call
 the stalk $\mathcal{C}_{\loga{X},x}$ the {\em characteristic monoid} of $\loga{X}$ at $x$. This is the monoid of effective Cartier divisors on $\Spec \mathcal{O}_{\cX,x}$ supported on $D\cap \Spec \mathcal{O}_{\cX,x}$. We can think of the dual monoid $$\mathcal{C}^{\vee}_{\loga{X},x}=\Hom(\mathcal{C}_{\loga{X},x},\N)$$ as the monoid of integral points on a rational polyhedral cone that describes the toroidal structure of the pair $(\cX,D)$ locally at $x$; see \cite[3.2]{kato}. The points $x$ where $\mathcal{C}_{\loga{X},x}$ is nonzero are precisely the points on the divisor $D$; we call $D$ the {\em logarithmic boundary} of the log scheme $\loga{X}$.

\begin{exam}
Assume that $\cX$ is regular.
 Then $\loga{X}$ is regular if and only if $D$ is a strict normal crossings divisor on $\cX$ \cite[3.3.1]{BuNi}. In that case, for every point $x$ of $\cX$, the characteristic monoid $\mathcal{C}_{\loga{X},x}$ is isomorphic to $\N^r$, where $r$ is the number of irreducible components of $D$ that pass through $x$.
\end{exam}

Let $D_i,\,i\in I$ be the irreducible components of $D$.
 A {\em logarithmic stratum} of $\loga{X}$ is a connected component of a set of the form
 $$\bigcap_{j\in J}D_j\setminus \left(\bigcup_{i\notin J}D_i\right) $$
 where $J$ is a subset of $I$. In particular, taking for $J$ the empty set, we see that the connected components of $\cX\setminus D$ are logarithmic strata. The characteristic sheaf $\mathcal{C}_{\loga{X}}$ is constant along each logarithmic stratum \cite[3.2.1]{BuNi}. The regularity of $\loga{X}$ implies that all the logarithmic strata are regular, if we endow them with their reduced induced structures. The {\em fan} $F(\loga{X})$ of $\loga{X}$ is the subspace of $\cX$ consisting of the generic points of the logarithmic strata, endowed with the pullback of the characteristic sheaf $\mathcal{C}_{\loga{X}}$. A dictionary with the classical notion of rational polyhedral fan is provided in \cite[9.5]{kato}.
 For every point $\sigma$ of $F(\loga{X})$, we denote the logarithmic stratum containing $\sigma$ by $E(\sigma)^o$. To every proper subdivision of the fan $F(\loga{X})$ as defined in \cite[9.7]{kato}, one can associate a proper birational morphism of regular log schemes $\loga{Y}\to \loga{X}$ as in \cite[9.9]{kato}, which we call a {\em toroidal modification} of $\loga{X}$. An important class of examples are normalized blow-ups of closures of logarithmic strata.

  We set $S=\Spec R$ and we denote by $S^\dagger$ the log scheme obtained by endowing $S$ with the divisorial log structure induced by its unique closed point. This is called the standard log structure on $S$. If $D$
   contains the reduced special fiber $\cX_{k,\mathrm{red}}$, then the structural morphism $\cX\to S$ induces a morphism of log schemes $\loga{X}\to S^{\dagger}$.
 By \cite[3.6.1]{BuNi}, regularity of $\loga{X}$ implies that the morphism of log schemes $\loga{X}\to S^{\dagger}$ is smooth, because $k$ has characteristic zero. Conversely, if
 $\loga{Y}$ is a fine and saturated log scheme and $\loga{Y}\to S^{\dagger}$ is a smooth morphism of log schemes, then $\loga{Y}$ is regular \cite[8.2]{kato}, and the log structure on $\loga{Y}$ is the divisorial log structure induced by some reduced Weil divisor on $\loga{Y}$ that contains $\cY_{k,\mathrm{red}}$ \cite[11.6]{kato}.

\subsection{Existence of the motivic volume}
We will prove a refinement of Theorem \ref{thm:vol} that also provides a formula for the motivic volume in terms of
smooth logarithmic models, generalizing the formula for snc-models.

Let $\loga{X}$ be a smooth fine and saturated log scheme of finite type over $S^{\dagger}$, and denote by $D$ its logarithmic boundary.
 Let $\sigma$ be a point in $F(\loga{X})\cap \cX_k$. We denote by $\overline{t}$ the residue class of $t$  in the characteristic monoid $\mathcal{C}_{\loga{X},\sigma}$.
 The {\em rank} of $\loga{X}$ at $\sigma$ is the Krull dimension of the monoid $\mathcal{C}_{\loga{X},\sigma}$, or, equivalently,  the rank of the abelian group $\mathcal{C}^{\mathrm{gp}}_{\loga{X},\sigma}$ associated with the monoid $\mathcal{C}_{\loga{X},\sigma}$.
 This rank is denoted by $r(\sigma)$. It is equal to the codimension of the stratum $E(\sigma)^o$ in $\cX$.
 We define the {\em horizontal rank} $r_h(\sigma)$ of $\loga{X}$ at $\sigma$ as the dimension of
 the face of the dual monoid $\mathcal{C}^{\vee}_{\loga{X},\sigma}$ defined by $\overline{t}=0$, and
 the {\em vertical rank} $r_v(\sigma)$ as the difference $r(\sigma)-r_h(\sigma)$.
  We have $r_v(\sigma)=r(\sigma)$ if and only if $D=\cX_{k,\mathrm{red}}$ locally around $\sigma$ (in the logarithmic literature, one then says that the log structure is {\em vertical} at $\sigma$).

 Let $\rho$ be the largest positive integer such that $\overline{t}$  is divisible by $\rho$ in $\mathcal{C}_{\loga{X},\sigma}$. This invariant is called the {\em root index} of $\loga{X}$ at $\sigma$ in \cite{BuNi}. If the underlying scheme $\cX$ of $\loga{X}$ is regular, then $\rho$ is the greatest common divisor of the multiplicities of the components of $\cX_k$ that pass through $\sigma$, by Example 4.1.1 in \cite{BuNi}.

 For every positive integer $m$, we set $S(m)=\Spec R(m)$, and we denote by $\cX(m)$ the normalization of $\cX\times_S S(m)$.
 We denote  by $\widetilde{E}(\sigma)^o$ the reduced inverse image of the log stratum $E(\sigma)^o$ in $\cX(\rho)$. It is equipped with a natural good action of the group scheme $\mu_{\rho}$, via the action of $\mu_{\rho}$ on $\cX(\rho)$. If $m$ is any positive multiple of
 $\rho$, then the natural morphism $\cX(m)\to \cX(\rho)$ induces an isomorphism
 $$(\widetilde{E}^o(\sigma)\times_{\cX(\rho)}\cX(m))_{\mathrm{red}}\to \widetilde{E}(\sigma)^o,$$
  by \cite[4.1.2]{BuNi}. Thus to compute $\widetilde{E}(\sigma)^o$, we may freely replace $\rho$ by any positive multiple.

  We define the motivic volume
  $\Vol(\loga{X})$ by
  $$\Vol(\loga{X})=\sum_{\sigma\in F(\loga{X})\cap \cX_k}(1-\LL)^{r_v(\sigma)-1}[\widetilde{E}(\sigma)^o]$$
 in $\gro^{\gal}(\Var_{k})$.
  When $\cX$ is a flat normal $R$-scheme of finite type such that the log scheme $\loga{X}$
associated with $(\cX,\cX_{k,\mathrm{red}})$ is regular,
 we will write $\Vol(\cX)$ instead of $\Vol(\loga{X})$.

\begin{lemma}\label{lemm:vollocal}
The motivic volume $\Vol(\loga{X})$ is local on $\loga{X}$, in the following sense: if
$\{\cU^{\dagger}_\alpha,\,\alpha\in A\}$ is a finite open cover of $\loga{X}$, then
 $$\Vol(\loga{X})=\sum_{\emptyset \neq B\subset A}(-1)^{|B|-1}\Vol(\cap_{\beta\in B}\cU^{\dagger}_\beta). $$
\end{lemma}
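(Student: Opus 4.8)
The plan is to reduce the claimed identity to an elementary inclusion--exclusion computation in $\gro^{\gal}(\Var_k)$, once one has checked that every ingredient of the formula defining $\Vol$ restricts compatibly to open sub-log-schemes.

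First I would analyse restriction to an open subscheme $\cU$ of $\cX$, with associated open sub-log-scheme $\cU^{\dagger}$. Since regularity of log schemes is local, $\cU^{\dagger}$ is again smooth, fine and saturated over $S^{\dagger}$, so $\Vol(\cU^{\dagger})$ is defined. As $\loga{X}$ is regular, each log stratum $E(\sigma)^o$ is connected and regular, hence irreducible with generic point $\sigma$; therefore $E(\sigma)^o\cap\cU$ is empty when $\sigma\notin\cU$ and is a dense (still irreducible) open subscheme of $E(\sigma)^o$ with generic point $\sigma$ when $\sigma\in\cU$. It follows that the log strata of $\cU^{\dagger}$ contained in $\cU_k$ are precisely the $E(\sigma)^o\cap\cU$ with $\sigma\in F(\loga{X})\cap\cX_k\cap\cU$, and in particular $F(\cU^{\dagger})\cap\cU_k=F(\loga{X})\cap\cX_k\cap\cU$. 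The invariants $r(\sigma)$, $r_h(\sigma)$, $r_v(\sigma)$ and the root index $\rho$ at $\sigma$ depend only on the stalk $\mathcal{C}_{\loga{X},\sigma}$ and are therefore unchanged. Finally, normalisation commutes with restriction to opens, so if $\pi\colon\cX(\rho)\to\cX$ is the projection then the cover of $E(\sigma)^o\cap\cU$ computed on $\cU^{\dagger}$ is the $\mu_{\rho}$-stable open subscheme $\widetilde{E}(\sigma)^o\cap\pi^{-1}(\cU)$ of $\widetilde{E}(\sigma)^o$, carrying the restricted (again good) $\mu_{\rho}$-action. Writing $\cU_B=\bigcap_{\beta\in B}\cU_{\beta}$, this gives
\[
\Vol\Big(\bigcap_{\beta\in B}\cU^{\dagger}_{\beta}\Big)=\sum_{\sigma\in F(\loga{X})\cap\cX_k,\ \sigma\in\cU_B}(1-\LL)^{r_v(\sigma)-1}\,\big[\widetilde{E}(\sigma)^o\cap\pi^{-1}(\cU_B)\big].
\]

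Next I would substitute this into the right-hand side of the lemma and exchange the two finite summations. For fixed $\sigma\in F(\loga{X})\cap\cX_k$, set $A_{\sigma}=\{\alpha\in A\,:\,\sigma\in\cU_{\alpha}\}$, which is non-empty because $\{\cU_{\alpha}\}$ covers $\cX$; the non-empty subsets $B\subset A$ with $\sigma\in\cU_B$ are exactly the non-empty subsets of $A_{\sigma}$. Pulling out the intrinsic coefficient $(1-\LL)^{r_v(\sigma)-1}$, the contribution of $\sigma$ to the right-hand side equals that coefficient times $\sum_{\emptyset\neq B\subset A_{\sigma}}(-1)^{|B|-1}\big[\widetilde{E}(\sigma)^o\cap\pi^{-1}(\cU_B)\big]$. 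Now the $\mu_{\rho}$-stable open subschemes $W_{\alpha}=\widetilde{E}(\sigma)^o\cap\pi^{-1}(\cU_{\alpha})$, $\alpha\in A_{\sigma}$, cover $\widetilde{E}(\sigma)^o$ — since $\{\cU_{\alpha}\}_{\alpha\in A_{\sigma}}$ covers the irreducible set $E(\sigma)^o$, any $\cU_{\alpha}$ meeting $E(\sigma)^o$ contains its generic point $\sigma$ — and $\bigcap_{\beta\in B}W_{\beta}=\widetilde{E}(\sigma)^o\cap\pi^{-1}(\cU_B)$. The standard inclusion--exclusion formula for the class of a scheme covered by finitely many open subschemes, valid in $\gro^{\gal}(\Var_k)$ by the scissor relations since all the pieces are $\mu_{\rho}$-stable, then yields $\sum_{\emptyset\neq B\subset A_{\sigma}}(-1)^{|B|-1}\big[\bigcap_{\beta\in B}W_{\beta}\big]=\big[\widetilde{E}(\sigma)^o\big]$. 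Summing over $\sigma$ recovers $\Vol(\loga{X})$, which is the assertion.

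The only substantive step is the first one: checking that the fan, the strata, the monoid-theoretic local invariants and the torsors $\widetilde{E}(\sigma)^o$ all behave well under passage to an open sub-log-scheme — in particular that an open restriction of a single log stratum is again a single connected log stratum, and that $\widetilde{E}(\sigma)^o$ is built compatibly with restriction. I do not expect a genuine difficulty here; everything afterwards is formal (a reordering of finite sums and inclusion--exclusion in the Grothendieck ring). The one point requiring a line of care is that the restricted $\mu_{\rho}$-actions remain good, i.e.\ that a $\mu_{\rho}$-stable open subscheme of a scheme covered by $\mu_{\rho}$-stable affine opens is again covered by such; this follows at once from $\mu_{\rho}$ being diagonalisable, since in a $\mu_{\rho}$-stable affine chart one may localise at a homogeneous (hence semi-invariant, hence $\mu_{\rho}$-stable) function.
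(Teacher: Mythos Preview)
Your argument is correct and follows the same route as the paper: the paper's proof is a single sentence (``This follows immediately from the definition of the motivic volume and the scissor relations in the Grothendieck ring''), and what you have written is a careful unpacking of exactly that sentence --- checking compatibility of strata, local invariants and the covers $\widetilde{E}(\sigma)^o$ under open restriction, and then applying inclusion--exclusion stratum by stratum. Your extra care with the goodness of the restricted $\mu_\rho$-action and with the fact that any $U_\alpha$ meeting $E(\sigma)^o$ must contain $\sigma$ is well placed, but none of it departs from the paper's intended argument.
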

\begin{proof}
This follows immediately from the definition of the motivic volume and the scissor relations in the Grothendieck ring.
\end{proof}

\begin{lemma}\label{lemm:euler}
Let $\gamma$ be a rational polyhedral cone in $\R^d$, for some positive integer $d$, and let $\Sigma$ be the fan consisting of the faces of $\gamma$. Let $\delta$ be a face of $\gamma$, and let $\Sigma'$ be a proper rational polyhedral subdivision of $\Sigma$ that contains the face $\delta$.
 We denote by
$\gamma_1,\ldots,\gamma_n$ the cones in $\Sigma'$ that intersect the relative interior of $\gamma$.

 Let $\delta'$ be a face of $\delta$,
let $I$ be the set of indices $i$ in $\{1,\ldots,r\}$ such that $\gamma_i\cap \delta=\delta'$, and set $$A_{\delta,\delta'}=\sum_{i\in I}(-1)^{\dim(\gamma_i)}.$$ Then
$A_{\delta,\delta'}=(-1)^{\dim(\gamma)}$ if $\delta'=\delta$, and
$A_{\delta,\delta'}=0$ otherwise.
 \end{lemma}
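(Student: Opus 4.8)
The plan is to deduce the identity from the additivity of the Euler characteristic with compact support $\chi_c$ over finite partitions into locally closed semi-algebraic subsets, combined with a Möbius inversion over the face poset of $\delta$. The single fact about $\chi_c$ that I will use is $\chi_c(\mathrm{relint}(\sigma))=(-1)^{\dim\sigma}$ for a rational polyhedral cone $\sigma$; this holds because $\mathrm{relint}(\sigma)$ is a nonempty open convex subset of its linear span, hence homeomorphic to a Euclidean space of dimension $\dim\sigma$.

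First I would record an elementary fact. For any rational polyhedral cone $C$ and any proper subdivision $\Pi$ of the fan of faces of $C$, a short supporting--hyperplane argument shows that a cone $\rho\in\Pi$ meets $\mathrm{relint}(C)$ if and only if $\mathrm{relint}(\rho)\subseteq\mathrm{relint}(C)$; hence the relative interiors of those cones form a finite partition of $\mathrm{relint}(C)$ into locally closed semi-algebraic pieces, and applying $\chi_c$ gives
\[
\sum_{\rho\in\Pi,\ \rho\cap\mathrm{relint}(C)\neq\emptyset}(-1)^{\dim\rho}=\chi_c(\mathrm{relint}(C))=(-1)^{\dim C},
\]
which I will call $(\star)$. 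Taking $C=\gamma$ and $\Pi=\Sigma'$ shows in particular that $\sum_{i=1}^{n}(-1)^{\dim\gamma_i}=(-1)^{\dim\gamma}$.

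Next I would use a star--fan reduction to compute, for each face $\delta''$ of $\delta$, the sum $B_{\delta''}:=\sum_{i\,:\,\gamma_i\supseteq\delta''}(-1)^{\dim\gamma_i}$. Since $\delta\in\Sigma'$ and $\Sigma'$ is a fan, $\delta''\in\Sigma'$; moreover $\delta''$ is a face of $\gamma$, since a face of a face of a cone is again a face of that cone. Let $\pi\colon\R^d\to\R^d/\langle\delta''\rangle$ be the quotient by the linear span of $\delta''$. I would invoke the standard facts that $\{\pi(\tau):\tau\in\Sigma',\ \tau\supseteq\delta''\}$ is a proper subdivision of the fan of faces of the cone $\pi(\gamma)$, that $\tau\mapsto\pi(\tau)$ is a bijection from $\{\tau\in\Sigma':\tau\supseteq\delta''\}$ onto that subdivision satisfying $\dim\pi(\tau)=\dim\tau-\dim\delta''$, and that $\tau$ meets $\mathrm{relint}(\gamma)$ if and only if $\pi(\tau)$ meets $\mathrm{relint}(\pi(\gamma))$ (because $\pi$ carries relative interiors onto relative interiors). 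Applying $(\star)$ to $\pi(\gamma)$ and this subdivision, and using $\dim\pi(\gamma)=\dim\gamma-\dim\delta''$, gives
\[
B_{\delta''}=(-1)^{\dim\delta''}\cdot(-1)^{\dim\pi(\gamma)}=(-1)^{\dim\gamma}
\]
for every face $\delta''$ of $\delta$. I expect this star--fan step to be the main obstacle, although it is routine toric combinatorics.

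Finally I would invert. For each $i$, the intersection $\gamma_i\cap\delta$ is a common face of the cones $\gamma_i,\delta\in\Sigma'$, hence a face of $\delta$, and for a face $\delta''$ of $\delta$ one has $\gamma_i\supseteq\delta''$ exactly when $\delta''\subseteq\gamma_i\cap\delta$. Grouping the cones $\gamma_i$ occurring in $B_{\delta''}$ according to the face $\gamma_i\cap\delta$ therefore yields, for every face $\delta''$ of $\delta$,
\[
(-1)^{\dim\gamma}=B_{\delta''}=\sum_{\delta'}A_{\delta,\delta'},
\]
where $\delta'$ runs over the faces of $\delta$ that contain $\delta''$. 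Möbius inversion in the finite poset of faces of $\delta$, with Möbius function $\mu$, then gives
\[
A_{\delta,\delta'}=(-1)^{\dim\gamma}\sum_{\delta'\subseteq\delta''\subseteq\delta}\mu(\delta',\delta''),
\]
the inner sum ranging over faces $\delta''$ of $\delta$ with $\delta'\subseteq\delta''$. By the defining property of the Möbius function this sum equals $1$ if $\delta'=\delta$ and $0$ otherwise, so $A_{\delta,\delta'}=(-1)^{\dim\gamma}$ if $\delta'=\delta$ and $A_{\delta,\delta'}=0$ otherwise, which is the assertion of the lemma. Throughout I would double-check the harmless bookkeeping point that all the cones invoked -- $\delta''$, the faces $\gamma_i\cap\delta$, and their images under $\pi$ -- really are cones of the fans in question, so that the face-poset combinatorics applies.
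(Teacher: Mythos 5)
Your proof is correct and follows essentially the same route as the paper's: both rest on the additivity of $\chi_c$ and the fact that $\chi_c(\mathrm{relint}(\sigma))=(-1)^{\dim\sigma}$, both first establish that the sum of $(-1)^{\dim\gamma_i}$ over the $\gamma_i$ containing a given face of $\delta$ equals $(-1)^{\dim\gamma}$, and both then invert over the face poset of $\delta$ (the paper writes the M\"obius function of the face lattice explicitly as $(-1)^{\dim\varepsilon-\dim\delta'}$ and evaluates the resulting sum as the vanishing $\chi_c$ of a strictly convex quotient cone, where you invoke the defining property of $\mu$ abstractly). Your treatment of the star sums via the quotient fan modulo $\langle\delta''\rangle$ is in fact more careful than the paper's one-line homeomorphism assertion, so no gaps to report.
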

\begin{proof}
 For every rational polyhedral cone $\varepsilon$, the value $(-1)^{\dim(\varepsilon)}$ is equal to the compactly supported Euler characteristic of the relative interior of $\varepsilon$.
 Since the union of the relative interiors of the cones $\gamma_i$ that contain $\delta$ is homeomorphic to $\gamma$, the equality $A_{\delta,\delta}=(-1)^{\dim(\gamma)}$ follows from the addivity of the compactly supported Euler characteristic. Thus we may assume that $\delta\neq \delta'$. Applying an inclusion-exclusion argument over all possible intersections of the cones $\gamma_i$ with the face $\delta$,
  we can write
 $$A_{\delta,\delta'}=\sum_{\varepsilon}(-1)^{\dim(\varepsilon)-\dim(\delta')}A_{\varepsilon,\varepsilon}=(-1)^{\dim(\gamma)}\sum_{\varepsilon}(-1)^{\dim(\varepsilon)-\dim(\delta')}$$
 where the sum is taken over all the faces $\varepsilon$ of $\delta$ that contain $\delta'$. The sum in the last member of this equality is equal to the compactly supported Euler characteristic
 of the projection $\overline{\delta}$ of $\delta$ onto the quotient of $\R^d$ by the real vector space space spanned by $\delta'$. This Euler characteristic vanishes because $\overline{\delta}$  is a strictly convex cone of positive dimension.
\end{proof}

 \begin{prop}\label{prop:modif}
The motivic volume $\Vol(\loga{X})$ is invariant under toroidal modifications of $\loga{X}$ that are isomorphisms on the generic fiber $\cX_K$.
  \end{prop}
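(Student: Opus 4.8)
The plan is to combine the locality of the motivic volume with the toric description of toroidal modifications over the logarithmic strata of $\loga{X}$ and the combinatorial identity of Lemma~\ref{lemm:euler}. First I would reduce to a local situation: a toroidal modification $h\colon\loga{Y}\to\loga{X}$ which is an isomorphism on $\cX_K$ restricts, over any open subscheme $\cU^\dagger$ of $\loga{X}$, to a toroidal modification of $\cU^\dagger$ of the same kind, and $h$ carries an open cover $\{\cU^\dagger_\alpha\}$ of $\loga{X}$ to the open cover $\{h^{-1}(\cU^\dagger_\alpha)\}$ of $\loga{Y}$; so Lemma~\ref{lemm:vollocal} lets me assume that $\loga{X}$ is small, i.e.\ that $F(\loga{X})$ is the fan of faces of a single rational polyhedral cone $\gamma_0$ and that $h$ corresponds to a proper subdivision $\Sigma'$ of $\gamma_0$.

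Next I would group the sum defining $\Vol(\loga{Y})$ according to the images of the new strata. Each point $\sigma\in F(\loga{Y})$ lies in the relative interior of a unique cone $\gamma_{\tau(\sigma)}$ of $F(\loga{X})$, and then $h(E(\sigma)^o)\subseteq E(\tau(\sigma))^o$, with $\sigma\in\cY_k$ if and only if $\tau(\sigma)\in\cX_k$. Writing $\gamma_\tau=\mathcal C^\vee_{\loga{X},\tau}$ and letting $\delta_\tau\le\gamma_\tau$ be the face on which $\overline t$ vanishes (so $\dim\gamma_\tau=r(\tau)$ and $\dim\delta_\tau=r_h(\tau)$), the hypothesis that $h$ is an isomorphism on $\cX_K$ means exactly that $\delta_\tau$ and all of its faces belong to $\Sigma'$, since $\delta_\tau$ is the cone of a logarithmic stratum contained in $\cX_K$. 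For $\sigma$ with $\tau(\sigma)=\tau$ one has $\gamma_\sigma\subseteq\gamma_\tau$, hence $r(\sigma)=\dim\gamma_\sigma\le r(\tau)$ and $r_h(\sigma)=\dim(\gamma_\sigma\cap\delta_\tau)$. So it suffices to prove, for each fixed $\tau\in F(\loga{X})\cap\cX_k$, that
\[
\sum_{\tau(\sigma)=\tau}(1-\LL)^{r_v(\sigma)-1}\,[\widetilde E(\sigma)^o]\;=\;(1-\LL)^{r_v(\tau)-1}\,[\widetilde E(\tau)^o].
\]

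The step I expect to be the main obstacle is the geometric identity $[\widetilde E(\sigma)^o]=(\LL-1)^{\,r(\tau)-r(\sigma)}[\widetilde E(\tau)^o]$ in $\gro^{\gal}(\Var_k)$, for $\tau(\sigma)=\tau$. Over $E(\tau)^o$ the morphism $h$ is the pullback of a toric modification of the affine toric variety attached to $\gamma_\tau$, and $E(\sigma)^o\to E(\tau)^o$ is, Zariski-locally on the target, a trivial $\mathbb G_m^{\,r(\tau)-r(\sigma)}$-bundle, coming from the split surjection of tori $O_{\gamma_\sigma}\to O_{\gamma_\tau}$ dual to the saturated inclusion $\gamma_\tau^\perp\subseteq\gamma_\sigma^\perp$. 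Passing to the $\mu_\rho$-covers (using the root-index comparison of \cite[4.1.2]{BuNi}), one checks that the residual $\gal$-action on the torus fibre is linear, hence that the relation trivializing linear actions in $\gro^{\gal}(\Var_k)$ yields $[\mathbb G_m^{\,r(\tau)-r(\sigma)}]=(\LL-1)^{\,r(\tau)-r(\sigma)}$ with trivial action. Making this precise --- tracking $\gal$-equivariance across the different covers and verifying that the fibrewise action is genuinely linear --- is the technical heart, and the relevant local computations are essentially those in \cite{BuNi}.

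Granting this, the rest is the combinatorics of Lemma~\ref{lemm:euler}. Substituting the identity, writing $\LL-1=-(1-\LL)$, and using $r_v(\sigma)-1+r(\tau)-r(\sigma)=r(\tau)-1-r_h(\sigma)$ with $r_h(\sigma)=\dim(\gamma_\sigma\cap\delta_\tau)$, the $\sigma$-term becomes $(-1)^{r(\tau)}(-1)^{\dim\gamma_\sigma}(1-\LL)^{\,r(\tau)-1-\dim(\gamma_\sigma\cap\delta_\tau)}[\widetilde E(\tau)^o]$. Grouping the cones $\gamma_\sigma$ --- which are exactly those cones of $\Sigma'$ meeting the relative interior of $\gamma_\tau$ --- by the face $\delta'=\gamma_\sigma\cap\delta_\tau$ of $\delta_\tau$, the inner sum of $(-1)^{\dim\gamma_\sigma}$ is precisely $A_{\delta_\tau,\delta'}$ as in Lemma~\ref{lemm:euler}, whose hypotheses hold because $\delta_\tau$ and its faces lie in $\Sigma'$. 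That lemma makes all terms with $\delta'\ne\delta_\tau$ vanish and contributes $(-1)^{r(\tau)}$ when $\delta'=\delta_\tau$, leaving $(1-\LL)^{\,r(\tau)-1-r_h(\tau)}[\widetilde E(\tau)^o]=(1-\LL)^{r_v(\tau)-1}[\widetilde E(\tau)^o]$. Summing over $\tau\in F(\loga{X})\cap\cX_k$ gives $\Vol(\loga{Y})=\Vol(\loga{X})$.
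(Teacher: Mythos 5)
Your proposal is correct and follows essentially the same route as the paper: reduce to the per-stratum identity over each $\sigma\in F(\loga{X})\cap\cX_k$, use the ($\gal$-equivariant) $\mathbb{G}_m$-torsor structure of the new strata over the old one from \cite{BuNi} to get $[\widetilde E(\tau)^o]=(\LL-1)^{r(\sigma)-r(\tau)}[\widetilde E(\sigma)^o]$, and then conclude with the combinatorial identity of Lemma~\ref{lemm:euler}, whose hypothesis on the face $\overline t=0$ is exactly where the assumption that $h$ is an isomorphism on $\cX_K$ enters. The only (harmless) cosmetic differences are your initial localization via Lemma~\ref{lemm:vollocal}, which the stratumwise decomposition already makes unnecessary, and grouping the cones by the face $\gamma_\sigma\cap\delta_\tau$ rather than by its dimension.
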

\begin{proof}
Let $h\colon \loga{Y}\to \loga{X}$ be a toroidal modification, associated with a proper subdivision $h_F\colon F(\loga{Y})\to F(\loga{X})$ of the fan of $\loga{X}$.
% Assume that the induced morphism $\cY_K\to \cX_K$ between the generic fibers is an isomorphism.
 It suffices to show that, for every point $\sigma$ of $F(\loga{X})\cap \cX_k$, we have
 $$(1-\LL)^{r_v(\sigma)-1}[\widetilde{E}(\sigma)^o]=\sum_{\tau\in h_F^{-1}(\sigma)}(1-\LL)^{r_v(\tau)-1}[\widetilde{E}(\tau)^o]$$ in $\gro^{\gal}(\Var_{k})$.

  By \cite[4.1.3]{BuNi}, we have for every $\tau$ in $h_F^{-1}(\sigma)$ a natural morphism
  $\widetilde{E}(\tau)^o\to \widetilde{E}(\sigma)^o$, and $\widetilde{E}(\tau)^o$ is a $\mathbb{G}^{r(\sigma)-r(\tau)}_{m,k}$-torsor over $\widetilde{E}(\sigma)^o$. As observed in
  in the proof of \cite[7.2.1]{BuNi}, this torsor is $\gal$-equivariant if we put the trivial $\gal$-action on  $\mathbb{G}^{r(\sigma)-r(\tau)}_{m,k}$. By \cite[7.1.1]{BuNi}, this implies that
  $$[\widetilde{E}(\tau)^o]=(\LL-1)^{r(\sigma)-r(\tau)}[\widetilde{E}(\sigma)^o]$$
  in $\gro^{\gal}(\Var_{k})$. Thus it is enough to prove the following equalities: for every
  nonnegative integer $m$, we have
  $$\sum_{\tau\in h_F^{-1}(\sigma),\,r_h(\tau)=m}(-1)^{r(\tau)} = \left\{\begin{array}{ll} (-1)^{r(\sigma)}, & \mbox{if }m= r_h(\sigma)
  \\[1ex] 0, & \mbox{otherwise.}
  \end{array}\right.$$
   We write $\R_{\geq 0}$ for the additive monoid of nonnegative real numbers, and  $\sigma^{\vee}_{\R}$ for the strictly convex rational polyhedral cone
   $\Hom(\mathcal{C}_{\loga{X},\sigma},\R_{\geq 0})$ generated by the dual monoid $\mathcal{C}^{\vee}_{\loga{X},\sigma}$.
   For every $\tau$ in $h_F^{-1}(\sigma)$, the real cone $\tau^{\vee}_{\R}$ is defined analogously.
 By the dictionary provided in \cite[9.5]{kato}, we can
   view the cones $\tau^{\vee}_{\R}$ as the cones in a proper subdivision of   $\sigma^{\vee}_{\R}$ that intersect the relative interior of $\sigma^{\vee}_{\R}$. Since $h$ is an isomorphism on the generic fibers, this subdivision contains the face of $\sigma^{\vee}_{\R}$ defined by $\overline{t}=0$.
     Thus the result follows from Lemma \ref{lemm:euler}.
\end{proof}

 \begin{prop}\label{prop:horiz}
 The motivic volume $\Vol(\loga{X})$ is independent of the log structure:
 if $\loga{Y}$ is another fine and saturated log scheme that is smooth and of finite type over $S^{\dagger}$, and $h\colon \loga{Y}\to \loga{X}$ is a morphism of log schemes that is an isomorphism on the underlying schemes, then $\Vol(\loga{X})=\Vol(\loga{Y})$.
  \end{prop}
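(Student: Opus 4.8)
The plan is to reduce the statement to a local comparison of the two log structures. Since $h$ is an isomorphism on underlying schemes we identify them with $\cX$; as both $\loga{X}$ and $\loga{Y}$ are regular (being smooth over $S^{\dagger}$), their log structures are the divisorial log structures attached to reduced Weil divisors $D$ and $D'$ containing $\cX_{k,\mathrm{red}}$, and a morphism of divisorial log structures over $\mathrm{id}_{\cX}$ is exactly an inclusion $D\subseteq D'$ (as subsheaves of $\mathcal{O}_{\cX}$, $\mathcal{M}_{\loga{X}}\subseteq\mathcal{M}_{\loga{Y}}$). In particular every component of $D'$ that is not a component of $D$ is not a component of $\cX_k$; we call such components \emph{horizontal}. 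The heart of the matter is that enlarging the boundary by horizontal components leaves the two quantities that enter the formula for $\Vol$ --- the vertical rank $r_v$ and the root index $\rho$ --- unchanged, and only refines the log strata.

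Concretely, fix $\sigma\in F(\loga{X})\cap\cX_{k}$, with $E(\sigma)^o$ the corresponding stratum of $\loga{X}$. First I would record the local combinatorics: the characteristic monoid of $\loga{Y}$ at $\sigma$ decomposes as $\mathcal{C}_{\loga{Y},\sigma}\cong\mathcal{C}_{\loga{X},\sigma}\oplus\N^{s}$, where $s$ is the number of horizontal components of $D'$ through $\sigma$, and the residue class $\overline{t}$ of $t$ lies in the summand $\mathcal{C}_{\loga{X},\sigma}$ (because $t$ maps to $t$ under the inclusion of log structures). Reading off the invariants: $r(\sigma)$ and the dimension of the face $\{\overline{t}=0\}$ of the dual cone, hence also $r_h(\sigma)$, all grow by $s$, so $r_v(\sigma)$ is the same for $\loga{Y}$ as for $\loga{X}$; and the largest integer dividing $\overline{t}$ is the same in $\mathcal{C}_{\loga{Y},\sigma}$ as in $\mathcal{C}_{\loga{X},\sigma}$, so $\rho$ is unchanged. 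Since the strata of $\loga{Y}$ refine those of $\loga{X}$, the stratum $E(\sigma)^o$ is the disjoint union, as a finite family of locally closed subschemes, of the strata $E(\tau)^o$ of $\loga{Y}$ that it contains; the vertical components through any such $\tau$ coincide with those through $\sigma$, so $r_v^{\loga{Y}}(\tau)=r_v^{\loga{X}}(\sigma)$ and $\rho^{\loga{Y}}(\tau)=\rho^{\loga{X}}(\sigma)=:\rho$ for all of them.

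It then remains to assemble the formula. The covers $\widetilde{E}(\tau)^o$ (for $\tau$ over $\sigma$) and $\widetilde{E}(\sigma)^o$ are all computed as reduced inverse images of the respective strata in one and the same scheme $\cX(\rho)$, which depends only on $\cX$. Because reduced inverse image commutes with finite disjoint unions of locally closed subschemes and preserves the $\mu_{\rho}$-action, the scissor relations give $[\widetilde{E}(\sigma)^o]=\sum_{\tau}[\widetilde{E}(\tau)^o]$ in $\gro^{\gal}(\Var_{k})$, the $\gal$-action factoring through $\mu_{\rho}$. Grouping the defining sum $\Vol(\loga{Y})=\sum_{\tau\in F(\loga{Y})\cap\cX_{k}}(1-\LL)^{r_v(\tau)-1}[\widetilde{E}(\tau)^o]$ according to the stratum of $\loga{X}$ containing $\tau$, and using $r_v^{\loga{Y}}(\tau)=r_v^{\loga{X}}(\sigma)$, each group collapses to $(1-\LL)^{r_v(\sigma)-1}[\widetilde{E}(\sigma)^o]$; summing over $\sigma\in F(\loga{X})\cap\cX_{k}$ yields $\Vol(\loga{X})$.

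I do not expect any single hard step; the main obstacle is the care needed in the local analysis --- checking that the inclusion of log structures identifies $\mathcal{C}_{\loga{Y},\sigma}$ with $\mathcal{C}_{\loga{X},\sigma}$ plus free coordinates, with $\overline{t}$ in the old part (this is where the hypotheses that both log structures are fine, saturated, smooth over $S^{\dagger}$, and that $D,D'\supseteq\cX_{k,\mathrm{red}}$ are used), and that the stratum refinement, the root-index comparison, and the reduced-inverse-image identity are all compatible with the $\mu_{\rho}$-actions, so that the computation genuinely takes place in $\gro^{\gal}(\Var_{k})$ rather than merely in $\gro(\Var_{k})$. If desired one can first invoke the locality Lemma \ref{lemm:vollocal} together with an induction to reduce to the case $D'=D+H$ for a single irreducible horizontal divisor $H$, which makes all the monoid computations completely explicit.
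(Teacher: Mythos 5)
Your global strategy is the same as the paper's: identify the underlying schemes, observe that the boundary only grows by ``horizontal'' components, note that the $\loga{Y}$-strata refine the $\loga{X}$-strata, show that the vertical rank is constant along the refinement, decompose the covers $\widetilde{E}(\cdot)^o$ by scissor relations in a common normalized base change, and regroup the sum. The assembly in your third paragraph is fine. The gap is in the local monoid analysis, which is exactly the part the paper has to work for. First, the asserted splitting $\mathcal{C}_{\loga{Y},\sigma}\cong\mathcal{C}_{\loga{X},\sigma}\oplus\N^{s}$ is not justified for general regular log schemes (characteristic monoids need not be free, and an enlargement of the boundary need not split off a free summand). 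In fact the statement you actually need at the generic point $\sigma$ of an $\loga{X}$-stratum is stronger and different: $s=0$, i.e.\ $\mathcal{C}_{\loga{X},\sigma}\to\mathcal{C}_{\loga{Y},\sigma}$ is an \emph{isomorphism}. This follows from injectivity together with Kato's regularity criterion, which forces both monoids to have dimension equal to the codimension of $\overline{\{\sigma\}}$; in particular no horizontal component of $D'$ passes through $\sigma$. Your sketch never isolates this point.

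Second, and more seriously, you deduce $r_v^{\loga{Y}}(\tau)=r_v^{\loga{X}}(\sigma)$ from ``the vertical components through $\tau$ coincide with those through $\sigma$.'' But $r_v$ is not defined as a count of vertical components; it is $r-r_h$ where $r_h$ is the dimension of the face $\{\overline{t}=0\}$ of the dual cone $\mathcal{C}^{\vee}_{\loga{Y},\tau}$. The identification of $r_v$ with a component count is only valid when the characteristic monoid is free (the snc case), whereas the proposition is stated for arbitrary fs log schemes smooth over $S^{\dagger}$. The paper's argument passes through the cone structure: by \cite[10.1]{kato} the cospecialization map exhibits $\mathcal{C}^{\vee}_{\loga{Y},\sigma}=\mathcal{C}^{\vee}_{\loga{X},\sigma}$ as a face of $\mathcal{C}^{\vee}_{\loga{Y},\tau}$, every ray of $\mathcal{C}^{\vee}_{\loga{Y},\tau}$ outside that face corresponds to a horizontal component of $D_{\cY}$ and hence satisfies $\overline{t}=0$, and a dimension count of the resulting faces gives the equality of vertical ranks. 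You need some version of this. A minor further point: your claim that the root index at $\tau$ equals that at $\sigma$ is neither obvious in general nor needed; it suffices (as you in effect also do) to compute all the covers after base change to a common multiple, using \cite[4.1.2]{BuNi}. With the local analysis repaired as above, your proof coincides with the paper's; as written, it is only complete in the snc case (which happens to cover the places where the proposition is applied, but not the statement as given).
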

\begin{proof}
We will use the isomorphism $h$ to identify the schemes $\cX$ and $\cY$.
 We denote by $D_{\cX}$ and $D_{\cY}$ the logarithmic boundary divisors of $\cX$ and $\cY$.
 Then $D_{\cX}$ is contained in $D_{\cY}$, and $D_{\cX}$ contains $\cX_{k,\mathrm{red}}$.
 It follows that every logarithmic stratum of $\loga{X}$ is a union of logarithmic strata of $\loga{Y}$. Therefore, it is enough to show that, if $E(\tau)^o$ is a logarithmic stratum of
 $\loga{Y}$ contained in $\cY_k$, and $E(\sigma)^o$ is the unique logarithmic stratum of $\loga{X}$ that contains $E(\tau)^o$, then the vertical rank of $\loga{X}$ at $\sigma$ is equal to the vertical rank of $\loga{Y}$ at $\tau$ (we do not need to compare of the root indices, because we can freely replace them by a common multiple in the construction of the covers $\widetilde{E}(\sigma)^o$ and $\widetilde{E}(\tau)^o$).

  The morphism of characteristic monoids
 $\mathcal{C}_{\loga{X},\sigma}\to \mathcal{C}_{\loga{Y},\sigma}$
induced by $h$ is an isomorphism, because it is injective and the definition of regularity for log schemes implies that these monoids have the same dimension (namely, the codimension of the closure of $\{\sigma\}$ in $\cX\cong \cY$).
  The point $\tau$ lies in the closure of $\{\sigma\}$, so that we have a cospecialization morphism
 $\mathcal{C}_{\loga{Y},\tau}\to \mathcal{C}_{\loga{Y},\sigma}$. It follows from \cite[10.1]{kato} that the dual of this morphism  identifies
 $\mathcal{C}_{\loga{Y},\sigma}^{\vee}$ with a face of $\mathcal{C}_{\loga{Y},\tau}^{\vee}$.
 Every one-dimensional face of $\mathcal{C}_{\loga{Y},\tau}^{\vee}$ that is not included in
 $\mathcal{C}_{\loga{Y},\sigma}^{\vee}$ corresponds to a component of $D_{\cY}$ that is not included in $\cY_{k,\mathrm{red}}$. Thus $\overline{t}=0$ on all those one-dimensional faces.
 Therefore, the vertical ranks of $\sigma$ and $\tau$ are equal.
\end{proof}

\begin{prop}\label{prop:indepmod}
Let $X$ be a smooth and proper $K$-scheme. Let $\loga{X}$ be a fine and saturated log scheme, endowed with a smooth morphism of finite type $\loga{X}\to S^{\dagger}$, such that the underlying scheme $\cX$ is an $R$-model of $X$.  Then $\Vol(\loga{X})$ only depends on $X$, and not on $\loga{X}$ or $\cX$.
\end{prop}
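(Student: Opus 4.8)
The plan is to strip away the auxiliary logarithmic and model data in two moves and then appeal to Weak Factorization in the form of Theorem \ref{thm:WF}.

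First I would reduce to the case in which $\cX$ is an snc-model of $X$ equipped with its canonical log structure. Since $X$ is smooth over $K$, the identification $\cX_K\cong X$ shows that $\cX_K$ lies in the regular locus of $\cX$; hence every point at which $\cX$ is singular --- equivalently, every point at which the cone of $F(\loga{X})$ fails to be unimodular --- lies over the closed point of $S$. A toric resolution of singularities then provides a proper subdivision of $F(\loga{X})$ refining only these cones, and the associated toroidal modification $h\colon\loga{X}'\to\loga{X}$ is an isomorphism over $\cX_K$ while its underlying scheme $\cX'$ is regular; the logarithmic boundary $D'$ of $\loga{X}'$ is then automatically a strict normal crossings divisor containing $\cX'_{k,\mathrm{red}}$, so $\cX'$ is an snc-model of $X$. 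By Proposition \ref{prop:modif} we have $\Vol(\loga{X})=\Vol(\loga{X}')$, and by Proposition \ref{prop:horiz}, applied to the natural morphism from $\loga{X}'$ to the log scheme attached to the pair $(\cX',\cX'_{k,\mathrm{red}})$ (which is an isomorphism on underlying schemes), $\Vol(\loga{X}')=\Vol(\cX')$. Thus it remains to prove that $\Vol(\cX')$ is independent of the chosen snc-model $\cX'$ of $X$.

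For this I would apply Theorem \ref{thm:WF} to the invariant $\mathcal{I}(\cX')=\Vol(\cX')$ of snc-models of $X$; it then suffices to show that $\Vol$ is unchanged by an elementary blow-up $\pi\colon\cX''\to\cX'$ with centre a connected regular closed subscheme $Z\subset\cX'_k$ meeting $\cX'_k$ with strict normal crossings. Because $\Vol$ is local on the model by Lemma \ref{lemm:vollocal}, I would check $\Vol(\cX')=\Vol(\cX'')$ over the members of a suitable finite Zariski cover $\{\cU_\alpha\}$ of $\cX'$: either $\cU_\alpha$ is disjoint from $Z$, in which case $\pi$ is an isomorphism over $\cU_\alpha$ and there is nothing to do, or $\cU_\alpha$ is small enough that there is an effective divisor $D_0$ on $\cU_\alpha$ with $D:=D_0+\cX'_k$ strict normal crossings and $Z\cap\cU_\alpha$ an intersection of irreducible components of $D$ (possible by the strict normal crossings hypothesis on $Z$, and arranged so that every pairwise intersection $\cU_\alpha\cap\cU_\beta$ is again of one of these two types). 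On an open $\cU$ of the second type, endow $\cU$ with the divisorial log structure of $(\cU,D)$; then the blow-up of $\cU$ along $Z\cap\cU$ is the toroidal modification attached to the star subdivision of the fan along the cone spanned by the components of $D$ through $Z$, and it is an isomorphism on the generic fibre because $Z\subset\cX'_k$. Proposition \ref{prop:modif} then gives $\Vol(\cU,D)=\Vol(\cU'',D'')$ for the total transform $D''$, and two applications of Proposition \ref{prop:horiz} identify these with $\Vol(\cU)$ and $\Vol(\cU'')$ respectively. Patching with Lemma \ref{lemm:vollocal} yields $\Vol(\cX')=\Vol(\cX'')$, as desired.

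The step I expect to require the most care is this last local computation: one has to verify that blowing up an intersection of strict-normal-crossings components is, relative to the enlarged divisorial log structure, exactly the toroidal modification coming from a star subdivision of the fan --- precisely the situation governed by Lemma \ref{lemm:euler} through Proposition \ref{prop:modif} --- and one has to choose the cover in Lemma \ref{lemm:vollocal} so that its combinatorics are compatible on $\cX'$ and on $\cX''$ simultaneously. By contrast, the reduction to snc-models is routine once Propositions \ref{prop:modif} and \ref{prop:horiz} are available, the only subtlety being the observation that $\cX_K$ lies in the regular locus of $\cX$, so that the toric resolution leaves the generic fibre untouched.
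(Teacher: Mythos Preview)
Your proposal is correct and follows essentially the same approach as the paper's own proof: reduce to snc-models via toroidal resolution (Proposition \ref{prop:modif}) and independence of the log structure (Proposition \ref{prop:horiz}), then invoke Weak Factorization (Theorem \ref{thm:WF}) and handle elementary blow-ups locally by enlarging the boundary divisor so that the blow-up becomes a toroidal modification. Your justification of why the toroidal resolution is an isomorphism on the generic fibre and your identification of the blow-up with a star subdivision are slightly more explicit than the paper's treatment, but the argument is the same.
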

\begin{proof}
By Proposition \ref{prop:modif} and toroidal resolution of singularities for regular log schemes \cite[10.4]{kato}, we may assume that $\cX$ is an snc-model of $X$. By Proposition \ref{prop:horiz}, we may further assume that the log structure on $\loga{X}$ is induced by the reduced special fiber $\cX_{k,\mathrm{red}}$.

 By Theorem \ref{thm:WF}, it suffices to show that, for every snc-model $\cX$ of $X$, the motivic volume $\Vol(\cX)$ is invariant under elementary blow-ups.
 Let $Z$ be a connected regular closed subscheme of $\cX_k$ that has strict normal crossings with $\cX_k$, and let $h\colon \cY\to \cX$ be the blow-up of $\cX$ at $Z$.
 We can compute the motivic volume $\Vol(\cX)$ locally on $\cX$, by Lemma \ref{lemm:vollocal}.   Thus we may assume that there exists a reduced divisor $D_0$ on $\cX$, flat over $R$, such that $D=\cX_{k}+D_0$ has strict normal crossings and $Z$ is an intersection
 of irreducible components of $D$. Let $\loga{X}$ be the log scheme associated with $(\cX,D)$.
Then $\Vol(\loga{X})=\Vol(\cX)$  by Proposition \ref{prop:horiz}, and $Z$ is the closure of a logarithmic stratum of $\loga{X}$.
 Moreover, if we set $D'=(h^{*}D)_{\mathrm{red}}$ and denote by $\loga{Y}$ the
 log scheme associated with $(\cY,D')$, then $\loga{Y}$ is regular and
 $\loga{Y}\to \loga{X}$ is a toroidal modification. Thus the result follows from Propositions \ref{prop:modif} and \ref{prop:horiz}.
\end{proof}

In view of Proposition \ref{prop:indepmod}, we can define the motivic volume of a smooth and proper $K$-scheme $X$ by the formula
$$\Vol_K(X)=\sum_{\sigma\in F(\loga{X})\cap \cX_k}(1-\LL)^{r_v(\sigma)-1}[\widetilde{E}(\sigma)^o]$$
 in $\gro^{\gal}(\Var_{k})$,
where $\loga{X}$ is any fine and saturated smooth log scheme over $S^{\dagger}$ whose underlying scheme is a model of $X$. If $\cX$ is an snc-model of $X$ and $\loga{X}$ is the log scheme associated with $(\cX,\cX_{k,\mathrm{red}})$, then our formula is equivalent to formula \eqref{eq:volX} in Theorem \ref{thm:vol}.
 We now prove that this invariant behaves well under blow-ups of smooth centers in $X$.

\begin{prop}\label{prop:blowup}
Let $X$ be a smooth and proper $K$-scheme, let $Z$ be a connected smooth closed subscheme of $X$, and let $Y\to X$ be the blow-up of $X$ along $Z$. We denote by $E$ the exceptional divisor of $Y$.
Then $\Vol_K(Y)-\Vol_K(E)=\Vol_K(X)-\Vol_K(Z)$ in $\gro^{\gal}(\Var_{k})$.
\end{prop}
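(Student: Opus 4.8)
Since $\Vol_K$ is independent of the chosen model (Proposition \ref{prop:indepmod}), the plan is to exhibit a single model in which all four volumes can be read off from \eqref{eq:volX}. First I would choose an snc-model $\cX$ of $X$ and let $\overline{Z}\subseteq\cX$ be the schematic closure of $Z$; it is flat over $R$ and horizontal, hence contains no $E_i$. Over the generic fibre the pair $(\cX,\overline{Z})$ restricts to the smooth pair $(X,Z)$, so the locus where $\overline{Z}$ is singular or fails to meet $\cX_k$ transversally is contained in $\cX_k$. Applying embedded resolution of $\overline{Z}$ relative to the divisor $\cX_k$ (which blows up only centres inside $\cX_k$ and hence does not alter the generic fibre), I may assume that $\cX$ is an snc-model of $X$ in which $\overline{Z}$ is moreover regular and has strict normal crossings with $\cX_k$, meaning that $\overline{Z}_J:=\overline{Z}\cap E_J^o$ is, for every nonempty $J\subseteq I$, smooth of pure codimension $c:=\mathrm{codim}(Z,X)$ in $E_J^o$ or empty, and that $\overline{Z}_k=\sum_i N_i(\overline{Z}\cap E_i)$ is an snc divisor on $\overline{Z}$. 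Then $\overline{Z}$ is an snc-model of $Z$ with strata $\overline{Z}_J$. I then set $\cY:=\mathrm{Bl}_{\overline{Z}}\cX$ with exceptional divisor $\cE=\Pro(N_{\overline{Z}/\cX})$, a smooth proper $\Pro^{c-1}$-bundle over $\overline{Z}$. Because $\overline{Z}$ has strict normal crossings with $\cX_k$, the scheme $\cY$ is regular and $\cY_k=\sum_i N_i E'_i$ is an snc divisor, where $E'_i$ is the strict transform of $E_i$; the divisor $\cE$ is horizontal and so is not a component of $\cY_k$, and $\cE_K$ is the exceptional divisor of $Y\to X$. Thus $\cY$ is an snc-model of $Y$, and $\cE$, with $\cE_k=\sum_i N_i(\cE\cap E'_i)$, is an snc-model of $E$ whose stratum indexed by $J$ is $\cE\cap(E'_J)^o$.

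The heart of the argument is a local analysis of the blow-up along the strata. Away from $\cE$ the morphism $\cY\to\cX$ is an isomorphism, while over a point $z\in\overline{Z}_J$ the transversality of $\overline{Z}$ with the $E_j$ identifies $E'_j\cap\cE$ with $\cE|_{\overline{Z}\cap E_j}$ for $j\in J$ and $E'_i\cap\cE$ with $\cE|_{\overline{Z}\cap E_i}$ for $i\notin J$. Intersecting and removing the relevant loci yields, for every nonempty $J\subseteq I$, a decomposition into locally closed subsets
\[
(E'_J)^o=\bigl(E_J^o\setminus\overline{Z}_J\bigr)\ \sqcup\ \cE|_{\overline{Z}_J},\qquad \cE\cap(E'_J)^o=\cE|_{\overline{Z}_J}.
\]
Using the compatibility of the formation of the $\mu_{N_J}$-covers with open immersions, with restriction to $\overline{Z}$, and with the smooth bundle map $\cE\to\overline{Z}$ — this is where I would invoke the results of \cite{BuNi} on cyclic covers (in the spirit of \cite[3.2.2]{BuNi}, \cite[4.1.2]{BuNi}) — this lifts to a $\mu_{N_J}$-equivariant decomposition of the corresponding covers in which the part over $\overline{Z}_J$ is the cover $\widetilde{\overline{Z}_J}$ appearing in the formula for $\Vol_K(Z)$, and the $\cE$-part is its pullback $\cE|_{\widetilde{\overline{Z}_J}}$ along $\cE\to\overline{Z}$, which is exactly the cover of $\cE\cap(E'_J)^o$ appearing in the formula for $\Vol_K(E)$. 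In $\gro^{\gal}(\Var_k)$ the scissor relations then give
\[
[\,\widetilde{(E'_J)^o}\,]=[\,\widetilde{E_J^o}\,]-[\,\widetilde{\overline{Z}_J}\,]+[\,\cE|_{\widetilde{\overline{Z}_J}}\,]
\]
for every nonempty $J\subseteq I$.

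Finally I would multiply this identity by $(1-\LL)^{|J|-1}$ and sum over all nonempty $J\subseteq I$. By \eqref{eq:volX} applied to the snc-models $\cY$, $\cX$, $\overline{Z}$ and $\cE$, the three groups of terms reassemble into $\Vol_K(Y)$, $\Vol_K(X)$, $-\Vol_K(Z)$ and $\Vol_K(E)$, whence $\Vol_K(Y)=\Vol_K(X)-\Vol_K(Z)+\Vol_K(E)$ in $\gro^{\gal}(\Var_k)$, as claimed. The step I expect to require the most care is the second paragraph: putting $\overline{Z}$ in normal crossings position with $\cX_k$, and above all tracking the ramified covers $\widetilde{E_J^o}$ through the blow-up — verifying that they split compatibly into the piece away from $\cE$ and the bundle piece, and that the bundle piece is genuinely pulled back from the cover $\widetilde{\overline{Z}_J}$ that enters the volumes of $Z$ and $E$. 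Everything else is an application of resolution of singularities, of Proposition \ref{prop:indepmod}, and of additivity in $\gro^{\gal}(\Var_k)$.
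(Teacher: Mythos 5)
Your proof is correct and follows essentially the same route as the paper's: pick a simultaneous snc-model in which the closure $\cZ$ of $Z$ has strict normal crossings with $\cX_k$, blow it up, decompose each stratum $(E'_J)^o$ into the part away from $\cE$ and the part $\cE|_{\cZ\cap E_J^o}$, match the covers, and sum with coefficients $(1-\LL)^{|J|-1}$. The only step you defer -- that $\widetilde{E}_J^o$ restricts over $\cZ\cap E_J^o$ to the cover $\widetilde{\cZ\cap E_J^o}$ computed from the snc-model $\cZ$ of $Z$, and likewise pulls back along $\cE\to\cZ$ -- is not quite what \cite[3.2.2]{BuNi} or \cite[4.1.2]{BuNi} assert; the paper proves it by showing that the normalization of $\cZ\times_R R(m)$ equals $\cX(m)\times_{\cX}\cZ$, using that the closed immersions of log schemes $\loga{Z}\to\loga{X}$ and $\loga{E}\to\loga{Y}$ are strict together with fine and saturated fiber products over $S^{\dagger}$.
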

\begin{proof}
By Hironaka's resolution of singularities, we can find an snc-model $\cX$ of $X$ such that the schematic closure $\cZ$ of $Z$ in $\cX$ has strict normal crossings with $\cX_k$. Then $\cZ$ is an snc-model for $Z$, the blow-up $\cY$ of $\cX$ along $\cZ$ is an snc-model for $Y$, and the inverse image $\cE$ of $\cZ$ in $\cY$ is an snc-model for $E$. It suffices to show that $\Vol(\cY)-\Vol(\cE)=\Vol(\cX)-\Vol(\cZ)$.

  We write $\loga{X}$, $\loga{Y}$, $\loga{Z}$ and $\loga{E}$ for the log schemes obtained by endowing $\cX$, $\cY$, $\cZ$ and $\cE$
  with the log structures induced by their reduced special fibers. All of these log schemes are smooth over $S^{\dagger}$.
  Moreover, at every point of $\cZ$, the morphism of log schemes $\loga{Z}\to \loga{X}$ induces an isomorphism of characteristic monoids at every point of $\loga{Z}$, and the analogous property holds for $\loga{E}\to \loga{Y}$. In other words, these closed immersions of log schemes are {\em strict}.
     Looking at the explicit formulas for
  motivic volumes of all these log schemes, and using the scissor relations in the Grothendieck ring, we see that it
  is enough to prove the following property: let $m$
be a positive integer, and denote by $\cX(m)$ the normalization of $\cX\times_S S(m)$. Then
$\cW=\cX(m)\times_{\cX}\cZ$ is the normalization of $\cZ\times_S S(m)$ (the corresponding result then also holds for $\cY$ and $\cE$).

   By \cite[3.7.1]{BuNi}, $\cX(m)$ is the underlying scheme of the fiber product $\cX(m)^{\dagger}=\loga{X}\times^{\mathrm{fs}}_{S^\dagger}S(m)^{\dagger}$ in the category of fine and saturated log schemes. Since $\loga{Z}\to \loga{X}$ is strict, the fiber product $\loga{W}=\cX(m)^{\dagger}\times_{\loga{X}}\loga{Z}$ in the category of log schemes is already fine and saturated, since it is a strict closed logarithmic subscheme of $\cX(m)^{\dagger}$.
   Thus $\loga{W}$ is isomorphic to the fine and saturated fiber product
   $$\cX(m)^{\dagger}\times^{\mathrm{fs}}_{\loga{X}}\loga{Z}\cong \loga{Z}\times^{\mathrm{fs}}_{S^\dagger}S(m)^{\dagger} $$
   which implies that the underlying scheme $\cW$ of $\loga{W}$ is the normalization of $\cZ\times_S S(m)$.
    \end{proof}

\begin{prop}\label{prop:mult}
If $X$ and $Y$ are smooth and proper $K$-schemes, then
$$\Vol_K(X\times_K Y)=\Vol_K(X)\cdot \Vol_K(Y)$$ in
 $\gro^{\gal}(\Var_{k})$.
\end{prop}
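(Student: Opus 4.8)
The plan is to compute $\Vol_K(X\times_K Y)$ on a conveniently chosen log model --- the fine and saturated fibre product over $S^{\dagger}$ of log models of $X$ and $Y$ --- and then to match the combinatorial data of this model with that of its factors. So fix snc-models $\cX$, $\cY$ of $X$, $Y$ and let $\loga{X}$, $\loga{Y}$ be the associated smooth log schemes over $S^{\dagger}$, with the divisorial log structures induced by the reduced special fibres (so the log structures are vertical). Set $\loga{Z}=\loga{X}\times^{\mathrm{fs}}_{S^{\dagger}}\loga{Y}$, with underlying scheme $\cZ$. Since smoothness of morphisms of fs log schemes is stable under fs base change and under composition, the composite $\loga{Z}\to\loga{Y}\to S^{\dagger}$ is smooth, so $\loga{Z}$ is a smooth fs log scheme over $S^{\dagger}$; moreover $\cZ$ is flat and proper over $R$, and its generic fibre is $X\times_K Y$, because away from the special fibre all log structures are trivial and the fs fibre product reduces to the ordinary one. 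By Proposition \ref{prop:indepmod} we therefore have $\Vol_K(X\times_K Y)=\Vol(\loga{Z})$. By Lemma \ref{lemm:vollocal} we may check the desired identity after restricting to affine opens of $\loga{X}$ and $\loga{Y}$, so it suffices to establish a comparison of contributions stratum by stratum.

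Next I would describe the fan of $\loga{Z}$. As the fan of $S^{\dagger}$ is a single ray and $\loga{Z}\to\loga{Y}$ is the fs base change of $\loga{X}\to S^{\dagger}$, the fan of $\loga{Z}$ is assembled from fibre products of cones: for a log stratum $\sigma$ of $\loga{X}$ contained in $\cX_k$ and a log stratum $\tau$ of $\loga{Y}$ contained in $\cY_k$, the log strata $\theta$ of $\loga{Z}$ lying over $E(\sigma)^o\times_k E(\tau)^o$ are indexed by the cones of a proper subdivision of the fibre product cone $\sigma^{\vee}_{\R}\times_{\R_{\geq 0}}\tau^{\vee}_{\R}$ meeting its relative interior, the two maps to $\R_{\geq 0}$ being evaluation at $\overline{t}$. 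The numerical inputs I need are, first, that the top-dimensional such $\theta$ have $r(\theta)=r(\sigma)+r(\tau)-1$ --- the $-1$ reflecting that the two copies of the $\overline{t}$-direction are identified --- while the lower-dimensional cones cancel by an Euler-characteristic bookkeeping of the type used in Lemma \ref{lemm:euler}; and second, that the covers assemble, i.e.
$$\sum_{\theta\,\mapsto\,(\sigma,\tau)}(1-\LL)^{r(\theta)-1}[\widetilde{E}(\theta)^o]\;=\;(1-\LL)^{r(\sigma)-1}(1-\LL)^{r(\tau)-1}\,[\widetilde{E}(\sigma)^o]\cdot[\widetilde{E}(\tau)^o]$$
in $\gro^{\gal}(\Var_k)$, the product on the right carrying the diagonal $\gal$-action. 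For the latter I would invoke the behaviour of the covers $\widetilde{E}(\cdot)^o$ under base change and toroidal modification from \cite{BuNi} --- the same results already used in Propositions \ref{prop:modif} and \ref{prop:horiz}, in particular the $\gal$-equivariant $\mathbb{G}_m^{\,\bullet}$-torsor structure relating the cover over a stratum and over its modifications, with trivial $\gal$-action on the torus --- together with the freedom to replace root indices by common multiples when computing covers. Summing the stratum-by-stratum identity over all pairs $(\sigma,\tau)$ and factoring the resulting double sum yields $\Vol(\loga{Z})=\Vol(\loga{X})\cdot\Vol(\loga{Y})=\Vol_K(X)\cdot\Vol_K(Y)$.

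The main obstacle is the second point above: tracking the twisted covers $\widetilde{E}(\theta)^o$ with their $\gal$-equivariant structure through the fibre product and the subdivision, and identifying their total class with $[\widetilde{E}(\sigma)^o]\cdot[\widetilde{E}(\tau)^o]$; this is where one genuinely uses both that $\gro^{\gal}(\Var_k)$ is a ring and that the fs fibre product computes the fibre products of the associated torsors. The rank bookkeeping is, by contrast, a purely combinatorial computation modelled on Lemma \ref{lemm:euler}. As a sanity check one can run the argument in the semistable case, where $\cX$, $\cY$ have reduced special fibres: then $\cZ$ is a log-smooth $R$-scheme with special fibre $\cX_k\times_k\cY_k$, its components are the products $E_i\times_k F_j$, the log strata are the products $E_I^o\times_k F_J^o$ with ranks $|I|+|J|-1$, all root indices equal $1$ so no twisting occurs, and the double sum factors at once into $\Vol_K(X)\cdot\Vol_K(Y)$. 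One might hope to reduce the general case to this via the base-change compatibility $\Vol_{K(n)}(-\times_K K(n))=\Res^{\gal}_{\gal(n)}\circ\Vol_K$, but the restriction maps $\Res^{\gal}_{\gal(n)}$ are not injective, so this does not suffice and the direct log-geometric computation is the route I would take.
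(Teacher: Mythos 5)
Your overall strategy is the same as the paper's: pass to the fine and saturated fibre product $\loga{Z}=\loga{X}\times^{\mathrm{fs}}_{S^{\dagger}}\loga{Y}$, whose underlying scheme is the \emph{normalization} of $\cX\times_R\cY$, note it is smooth over $S^{\dagger}$, and compare contributions over each pair of strata. One remark on the combinatorics: there is no subdivision involved. The characteristic monoid of $\loga{Z}$ at a point over $(\sigma,\tau)$ is the saturation of $\mathcal{C}_{\loga{X},\sigma}\oplus_{\N\overline{t}}\mathcal{C}_{\loga{Y},\tau}$, whose dual cone is exactly $\sigma^{\vee}_{\R}\times_{\R_{\geq 0}}\tau^{\vee}_{\R}$; the preimage of $E(\sigma)^o\times_k E(\tau)^o$ is a disjoint union of strata all of the same (maximal) rank $r(\sigma)+r(\tau)-1$, so the Euler-characteristic cancellation you anticipate is vacuous. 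This part of your picture is harmlessly over-complicated.

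The genuine gap is precisely the point you yourself flag as the main obstacle: the $\gal$-equivariant identification
$$\bigsqcup_{\theta}\widetilde{E}(\theta)^o\;\cong\;\widetilde{E}(\sigma)^o\times_k\widetilde{E}(\tau)^o$$
with the diagonal action on the right. The tools you propose to invoke do not produce it: the $\mathbb{G}_m$-torsor structure of the covers governs toroidal modifications of a fixed model, not fibre products, and the freedom to enlarge root indices only says each individual cover is insensitive to which multiple one uses. Moreover, since the underlying scheme of the fs fibre product is the normalization of $\cX\times_R\cY$ rather than $\cX\times_R\cY$ itself, the slogan that ``the fs fibre product computes the fibre products of the associated torsors'' is exactly what has to be proved, and it is false before base change. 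The ingredient the paper uses is Tsuji's theory of saturated morphisms of log schemes: for $m$ sufficiently divisible, $\cX(m)^{\dagger}\to S(m)^{\dagger}$ and $\cY(m)^{\dagger}\to S(m)^{\dagger}$ are saturated, hence the log fibre product $\cX(m)^{\dagger}\times_{S(m)^{\dagger}}\cY(m)^{\dagger}$ is already fine and saturated and its underlying scheme is the honest fibre product $\cX(m)\times_{S(m)}\cY(m)$, $\mu_m$-equivariantly; this is what identifies $\bigsqcup_{\theta}\widetilde{E}(\theta)^o$ with $\widetilde{E}(\sigma)^o\times_k\widetilde{E}(\tau)^o$ and closes the argument. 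Without this (or an equivalent explicit monoid-level computation of the normalized base change), your key step does not go through. Your closing observation---that one cannot reduce to the semistable case because the maps $\Res^{\gal}_{\gal(n)}$ are not injective---is correct.
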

\begin{proof}
Let $\cX$ and $\cY$ be snc-models of $X$ and $Y$, respectively.
We write $Z=X\times_K Y$ and we denote by $\cZ$ the normalization of $\cX\times_R \cY$.
 Let $\loga{X}$, $\loga{Y}$ and $\loga{Z}$ be the log schemes associated with the pairs $(\cX,\cX_{k,\mathrm{red}})$, $(\cY,\cY_{k,\mathrm{red}})$ and  $(\cZ,\cZ_{k,\mathrm{red}})$.
 By the same argument as in the proof of \cite[3.7.1]{BuNi}, we can identify $\loga{Z}$ with the
 fiber product $\loga{X}\times^{\mathrm{fs}}_{S^\dagger}\loga{Y}$ in the category of fine and saturated log schemes. In particular, $\loga{Z}$ is fine and saturated, and it is smooth over $S^{\dagger}$ because $\loga{X}$ and $\loga{Y}$ are smooth over $S^{\dagger}$, and smoothness is preserved by fine and saturated base change and composition. Thus we can use the model $\loga{Z}$ to compute the motivic volume of $Z$. On any regular log scheme $\loga{W}$, the points of the fan $F(\loga{W})\subset \cW$ are precisely the points $w$ whose codimension in $\cW$ is equal to the dimension of the characteristic monoid $\mathcal{C}_{\loga{W},w}$. It follows that a point of $\cZ$ lies in the fan $F(\loga{Z})$ if and only if its images in $\cX$ and $\cY$ lie in the fans of $\loga{X}$ and $\loga{Y}$, respectively. Thus for every $\sigma$ in $F(\loga{X})$ and every $\sigma'$ in $F(\loga{Y})$, the preimage of $E(\sigma)^o\times_k E(\sigma')^o$ in $\cZ_k$ is a union of logarithmic strata.

Let $m$ be a positive integer that is divisible by all the root indices of $\loga{X}$, $\loga{Y}$ and $\loga{Z}$. Then, if $m$ is sufficiently divisible, the morphisms $\cX(m)^{\dagger}\to S(m)^{\dagger}$ and $\cY(m)^{\dagger}\to S(m)^{\dagger}$ are {\em saturated} in the sense of \cite[II.2.1]{tsuji}, by \cite[II.3.4]{tsuji} (in fact, one can deduce from \cite[II.4.2]{tsuji} that it suffices to take for $m$ the least common multiple of the multiplicities of the components in $\cX_k$ and $\cY_k$, but we don't need this here).
This implies that $\cZ(m)^{\dagger}$ is the fibered product $\cX(m)^{\dagger}\times_{S(m)^{\dagger}}\cY(m)^{\dagger}$ in the category of log schemes. In particular, the natural morphism of schemes $\cZ(m)\to \cX(m)\times_{S(m)}\cY(m)$ is an isomorphism, and it is $\mu_m$-equivariant if we endow  $\cX(m)\times_{S(m)}\cY(m)$ with the diagonal action.
 Thus for every point $\sigma$ in $F(\loga{X})$ and every point $\sigma'$ in $F(\loga{X})$, we have a $\mu_m$-equivariant isomorphism
 $$\bigsqcup_{\tau}\widetilde{E}(\tau)^o\to \widetilde{E}(\sigma)^o\times_k \widetilde{E}(\sigma')^o$$ where $\tau$ runs through all the points of $F(\loga{Z})$ in the preimage of $(\sigma,\sigma')$.
Since $r(\tau)=r(\sigma)+r(\sigma')$ and the horizontal ranks are zero,
 it follows that
 $$\Vol(\loga{Z})=\Vol(\loga{X})\cdot \Vol(\loga{Y}).$$
\end{proof}

 \begin{prop}\label{prop:basech}
Let $K(n)=k\llpar t^{1/n} \rrpar$ be a finite extension of $K$ in $\pfi$, and denote by $\gal(n)$ the kernel of the projection $\gal\to \mu_n$. Then for every smooth and proper $K$-scheme $X$,
the volume $\Vol_{K(n)}(X\times_K K(n))$ is the image of $\Vol_K(X)$ under the restriction morphism  $$\Res^{\gal}_{\gal(n)}\colon \gro^{\gal}(\Var_k)\to \gro^{\gal(n)}(\Var_k).$$
 \end{prop}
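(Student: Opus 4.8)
The plan is to compute both sides on a single smooth logarithmic model and its base change, in the style of Propositions \ref{prop:modif} and \ref{prop:mult} of this appendix. First I would fix a fine and saturated smooth log scheme $\loga{X}$ over $S^{\dagger}$ whose underlying scheme $\cX$ is an $R$-model of $X$; by toroidal resolution and Proposition \ref{prop:horiz} I may assume that $\cX$ is an snc-model and that the logarithmic boundary of $\loga{X}$ is $\cX_{k,\mathrm{red}}$, so that the log structure is vertical at every point of $\cX_k$ and $r_v(\sigma)=r(\sigma)$ for all relevant $\sigma$. The model of $X\times_K K(n)$ I would use is $\cX(n)^{\dagger}=\loga{X}\times^{\mathrm{fs}}_{S^{\dagger}}S(n)^{\dagger}$: by \cite[3.7.1]{BuNi} its underlying scheme is the normalization $\cX(n)$ of $\cX\times_S S(n)$; it is smooth over $S(n)^{\dagger}$ since smoothness is stable under fine and saturated base change; its generic fiber is $X\times_K K(n)$; and its logarithmic boundary is $\cX(n)_{k,\mathrm{red}}$ (the new uniformizer $t^{1/n}$ is supported on $\cX(n)_{k,\mathrm{red}}$; apply \cite[11.6]{kato}), so it again has vertical log structure. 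By Proposition \ref{prop:indepmod} over $K(n)$, then, $\Vol_{K(n)}(X\times_K K(n))=\Vol(\cX(n)^{\dagger})$, given by the usual sum over $\tau\in F(\cX(n)^{\dagger})\cap\cX(n)_k$.

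Next I would use the description of $\cX(n)$ and its strata from \cite[\S4.1]{BuNi}: the projection $\cX(n)^{\dagger}\to\loga{X}$ induces a morphism of fans that is surjective over $F(\loga{X})\cap\cX_k$, the reduced preimage in $\cX(n)_k$ of a stratum $E(\sigma)^o$ is the disjoint union $\bigsqcup_{\tau\mapsto\sigma}E(\tau)^o$ with each $E(\tau)^o\to E(\sigma)^o$ finite and surjective, so that $r(\tau)=r(\sigma)$. Since $\Res^{\gal}_{\gal(n)}$ is a ring homomorphism (in particular $\Z[\LL]$-linear), comparing $\Vol(\cX(n)^{\dagger})$ with $\Vol_K(X)=\sum_{\sigma}(1-\LL)^{r(\sigma)-1}[\widetilde{E}(\sigma)^o]$ reduces the proposition to the stratum-wise identity $\Res^{\gal}_{\gal(n)}[\widetilde{E}(\sigma)^o]=\sum_{\tau\mapsto\sigma}[\widetilde{E}(\tau)^o]$ in $\gro^{\gal(n)}(\Var_k)$, for each $\sigma\in F(\loga{X})\cap\cX_k$.

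To establish this identity I would pass to a common finite cover: fix $\sigma$ with root index $\rho$, choose $m$ divisible by $n$ with $m/n$ divisible by every root index of $\loga{X}$, and note that $\cX(m)$ is also the normalization of $\cX(n)\times_{S(n)}S(m)$, equivariantly for the inclusion $\mu_{m/n}=\Gal(R(m)/R(n))\subset\mu_m=\Gal(R(m)/R)$. By \cite[4.1.2]{BuNi} the reduced preimage of $E(\sigma)^o$ in $\cX(m)$ is $\widetilde{E}(\sigma)^o$ with $\mu_m$ acting through $\mu_{\rho}$; applying the same result to the smooth log scheme $\cX(n)^{\dagger}$ over $S(n)^{\dagger}$ (whose root indices divide those of $\loga{X}$), the reduced preimage of $E(\tau)^o$ in $\cX(m)$ is $\widetilde{E}(\tau)^o$, with $\mu_{m/n}$ acting through $\mu_{\rho(\tau)}$. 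Computing the reduced preimage of $E(\sigma)^o$ in $\cX(m)$ in two ways --- directly, or by first passing to $\cX(n)$, where it is $\bigsqcup_{\tau\mapsto\sigma}E(\tau)^o$, and then to $\cX(m)$ --- yields a $k$-isomorphism $\widetilde{E}(\sigma)^o\cong\bigsqcup_{\tau\mapsto\sigma}\widetilde{E}(\tau)^o$. Then I would check two things: under this isomorphism the subgroup $\mu_{m/n}\subset\mu_m$ acts on the left-hand side by restricting the $\mu_m$-action, and on the right-hand side componentwise (since $\Gal(R(m)/R(n))$ fixes $\cX(n)$, hence each $\widetilde{E}(\tau)^o$ setwise) through precisely the Galois action that defines $[\widetilde{E}(\tau)^o]$ in $\gro^{\gal(n)}(\Var_k)$; and the restriction to $\gal(n)$ of the $\gal$-action on $\widetilde{E}(\sigma)^o$, which factors as $\gal\twoheadrightarrow\mu_m\to\mu_{\rho}$, factors through the surjection $\gal(n)\twoheadrightarrow\ker(\mu_m\to\mu_n)=\mu_{m/n}$, i.e. is exactly the $\mu_{m/n}$-action just described. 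Granting these two points the identity follows, and summing over $\sigma$ gives $\Vol_{K(n)}(X\times_K K(n))=\Res^{\gal}_{\gal(n)}(\Vol_K(X))$.

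I expect the main obstacle to be precisely this last verification of equivariance: one must keep careful track of which finite group scheme $\mu_{\bullet}$ (possibly non-constant over $k$) acts on each of the covers $\widetilde{E}(\sigma)^o$ and $\widetilde{E}(\tau)^o$, of how $\gal$ and $\gal(n)$ surject onto them, and one must confirm that $\Gal(R(m)/R(n))$ permutes the pieces $\widetilde{E}(\tau)^o$ trivially. The geometry underneath --- the identification of fans, strata and covers under the base change $R\to R(n)$ and under passing to $R(m)$ --- is supplied wholesale by \cite{BuNi}, so no computation in local coordinates should be required.
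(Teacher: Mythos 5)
Your proposal is correct and follows essentially the same route as the paper's proof: identify $\cX(n)^{\dagger}$ with the fine and saturated fiber product $\loga{X}\times^{\mathrm{fs}}_{S^{\dagger}}S(n)^{\dagger}$ via \cite[3.7.1]{BuNi}, match strata and ranks, and compare the covers $\widetilde{E}(\sigma)^o$ and $\widetilde{E}(\tau)^o$ on a further normalized base change using \cite[4.1.2]{BuNi}. The only difference is that you spell out the final equivariance verification in detail, where the paper compresses it into a single citation.
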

\begin{proof}
We choose an snc-model $\cX$ of $X$, and we denote by $\cX(n)$ the normalization of $\cX\times_S S(n)$. We write $\cX(n)^{\dagger}$ for the log scheme associated with $(\cX(n), \cX(n)_{k,\mathrm{red}})$.
 We have already recalled from \cite[3.7.1]{BuNi} that we can identify $\cX(n)^{\dagger}$ with the fiber product
 $\loga{X}\times^{\mathrm{fs}}_{S^{\dagger}}S(n)^{\dagger}$ in the category of fine and saturated log schemes.
In particular, $\cX(n)^{\dagger}$ is fine and saturated, and smooth over $S(n)^{\dagger}$, by the preservation of smoothness by fine and saturated base change.
 It also follows that, for every log stratum $E(\sigma)^o$ of $\loga{X}$, the reduced inverse image in $\cX(n)$ is a union of log strata $E(\tau)^o$ of $\cX(n)^{\dagger}$.
 Moreover, for each of these strata $E(\tau)^o$, we have
 $r_v(\sigma)=r(\sigma)$ and $r_v(\tau)=r(\tau)$ because the log structure on the generic fibers is trivial, and  $r(\sigma)=r(\tau)$ by the description of the characteristic monoids on a fine and saturated base change in \cite[\S3.7]{BuNi}.
 Finally, the sum of the classes $[\widetilde{E}(\tau)^o]$
in  $\gro^{\gal(n)}(\Var_k)$ equals
 $$\Res^{\gal}_{\gal(n)}([\widetilde{E}(\sigma)^o])$$ because we can compute the covers $\widetilde{E}(\sigma)^o$ and $\widetilde{E}(\tau)^o$ on a further normalized base change of $\cX(n)$,  by \cite[4.1.2]{BuNi}. The assertion now follows from the definition of the motivic volume.
\end{proof}

Now we are ready to prove our refinement of Theorem \ref{thm:vol}.

\begin{theorem}\label{thm:vol2}
There exists a unique ring morphism
$$\Vol_{K}:\gro(\Var_{K})\to \gro^{\gal}(\Var_k)$$ such that, for every smooth and proper $K$-scheme
$X$ and every fine and saturated smooth log scheme $\loga{X}$ over $S^{\dagger}$ whose underlying scheme is an $R$-model of $X$,
we have
$$\Vol_{K}(X)=\sum_{\sigma\in F(\loga{X})}(1-\LL)^{r_v(\sigma)}[\widetilde{E}(\sigma)^o].$$
 Moreover, for every finite extension $K(n)=k\llpar t^{1/n}\rrpar$ of $K$ in $\pfi$ and every $K$-scheme of finite type $Y$, the volume $\Vol_{K(n)}(Y\times_K K(n))$ is the image of $\Vol_{K}(Y)$ under the ring morphism
$$\Res^{\gal}_{\gal(n)}\colon \gro^{\gal}(\Var_k)\to \gro^{\gal(n)}(\Var_k)$$
where $\gal(n)$ is the kernel of the projection $\gal\to \mu_n$.
\end{theorem}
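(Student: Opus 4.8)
The plan is to assemble Propositions~\ref{prop:indepmod}, \ref{prop:blowup}, \ref{prop:mult} and~\ref{prop:basech} into the statement, using Bittner's presentation of $\gro(\Var_K)$ (Theorem~\ref{thm:bittner}) as the organizing principle. Uniqueness is immediate: by resolution of singularities every element of $\gro(\Var_K)$ is a $\Z$-linear combination of classes $[X]$ of smooth and proper $K$-schemes, and the displayed formula determines $\Vol_K$ on those classes, hence everywhere. So the content lies in the existence and in the two structural properties.

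For existence as a group morphism, I would first invoke Proposition~\ref{prop:indepmod}: for a smooth and proper $K$-scheme $X$ the right-hand side of the formula, computed on any fine and saturated smooth log scheme over $S^{\dagger}$ with underlying scheme an $R$-model of $X$, is independent of that model, so it defines an element $\Vol_K(X)\in\gro^{\gal}(\Var_k)$. I then apply Bittner's presentation: it suffices to check that $[X]\mapsto\Vol_K(X)$ kills $[\emptyset]$, which is trivial, and respects the blow-up relation $[\mathrm{Bl}_Z X]-[E]=[X]-[Z]$ for $Z\subset X$ a connected smooth closed subscheme, which is exactly Proposition~\ref{prop:blowup}. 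This produces a group homomorphism $\Vol_K\colon\gro(\Var_K)\to\gro^{\gal}(\Var_k)$. Evaluating the formula on the log scheme associated with $(\cX,\cX_{k,\mathrm{red}})$ for an snc-model $\cX$ recovers \eqref{eq:volX}, since for that divisorial log structure the points of the fan lying on $\cX_k$ are precisely the generic points of the strata $E_J^o$, with vertical rank $|J|$ and root index $N_J$; hence this $\Vol_K$ is the morphism of Theorem~\ref{thm:vol}.

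For the ring structure I would argue by bilinearity. Since $\gro(\Var_K)$ is additively generated by classes of smooth and proper $K$-schemes, and for two such schemes $X$, $X'$ one has $[X]\cdot[X']=[X\times_K X']$ with $X\times_K X'$ again smooth and proper, multiplicativity of $\Vol_K$ on the whole ring follows by $\Z$-bilinearity from the single identity $\Vol_K(X\times_K X')=\Vol_K(X)\cdot\Vol_K(X')$, which is Proposition~\ref{prop:mult}; applying the formula to the trivial model of $\Spec K$ shows $\Vol_K$ is unital, and applying it to $\Pro^1$ and subtracting gives $\Vol_K(\LL)=\LL$. Finally, for compatibility with $K(n)/K$, Proposition~\ref{prop:basech} gives the desired identity on the class of each smooth and proper $K$-scheme; since base change along $K(n)/K$, together with $\Vol_K$, $\Vol_{K(n)}$ and $\Res^{\gal}_{\gal(n)}$, are all additive, the two maps $\gro(\Var_K)\to\gro^{\gal(n)}(\Var_k)$ in the statement agree on the additive generators, hence on all of $\gro(\Var_K)$, and in particular on the class $[Y]$ of an arbitrary $K$-scheme of finite type $Y$.

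The main obstacle is not in this proof at all but in the preparatory propositions (above all Propositions~\ref{prop:blowup} and~\ref{prop:mult}, which rest on the log-geometric machinery and on Weak Factorization through Theorem~\ref{thm:WF}). What requires a moment of care \emph{here} is the logical step from Bittner's \emph{group} presentation to the \emph{ring} statement: one cannot impose a ring structure on the presentation directly, but must separately observe that multiplicativity need only be verified on products of generators, which are themselves generators, so that Proposition~\ref{prop:mult} together with bilinearity suffices. The only other mild point is the translation of the log-theoretic formula into \eqref{eq:volX} on snc-models, which is the dictionary between fans and strata recalled above.
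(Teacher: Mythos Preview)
Your proposal is correct and follows essentially the same approach as the paper's own proof: both use Bittner's presentation together with Proposition~\ref{prop:indepmod} and Proposition~\ref{prop:blowup} for existence as a group morphism, then invoke Proposition~\ref{prop:mult} for multiplicativity and Proposition~\ref{prop:basech} for the base change compatibility. Your write-up is somewhat more explicit than the paper's (in particular, you spell out the bilinearity step bridging the group presentation and the ring statement, and the dictionary between the log-stratum formula and \eqref{eq:volX}), but the structure is identical.
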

\begin{proof}
The uniqueness and existence of $\Vol_K$ as a group homomorphism follow from Bittner's presentation of the Grothendieck group (Theorem \ref{thm:bittner}); the fact that our definition for $\Vol_K$ satisfies the blowup relations was proven in Proposition \ref{prop:blowup}. Proposition \ref{prop:mult} implies that $\Vol_K$ is a ring morphism, and its behaviour under extensions of $K$ was described in Proposition \ref{prop:basech}.
\end{proof}

\subsection{Existence of the motivic reduction}
Finally, we prove a refined version of Proposition \ref{prop:red} on the existence of the motivic reduction.

\begin{prop}\label{prop:motred2}
There exists a unique group homomorphism
$$\red:\gro(\Var_{K})\to \gro(\Var_k)$$ such that, for every smooth and proper $K$-scheme $X$
and every fine and saturated smooth log scheme $\loga{X}$ over $S^{\dagger}$ whose underlying scheme is an $R$-model of $X$,
we have
$$\red(X)=\sum_{\sigma\in F(\loga{X})}(1-\LL)^{r_v(\sigma)}[E(\sigma)^o].$$
The morphism $\red$ is a morphism of $\Z[\LL]$-modules.
\end{prop}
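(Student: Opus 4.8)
The plan is to build $\red$ by composing the motivic volume $\Vol_K\colon\gro(\Var_{K})\to\gro^{\gal}(\Var_k)$ constructed in Theorem \ref{thm:vol2} with the quotient morphism
$$(\cdot)/\gal\colon\gro^{\gal}(\Var_k)\to\gro(\Var_k),\qquad [Y]\mapsto[Y/\gal],$$
whose existence and $\Z[\LL]$-linearity over an arbitrary field $k$ of characteristic zero follow from \cite[5.1]{looijenga}. Since $\Vol_K$ is a ring homomorphism carrying $\LL$ to $\LL$, it is $\Z[\LL]$-linear, and hence the composite $\red:=(\cdot)/\gal\circ\Vol_K$ is a morphism of $\Z[\LL]$-modules. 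This gives existence; uniqueness follows because the classes of smooth and proper $K$-schemes generate $\gro(\Var_K)$ as a group (Hironaka resolution, or Theorem \ref{thm:bittner}) and each such scheme admits an snc-model, hence a fine and saturated smooth log model over $S^{\dagger}$ (namely the one attached to $(\cX,\cX_{k,\mathrm{red}})$), so the prescribed formula determines $\red$ on a generating set.

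It then remains to check that this $\red$ satisfies the stated formula. The key local input is that the $\gal$-action on each cover $\widetilde{E}(\sigma)^o$ factors through the finite quotient $\mu_{\rho}$, where $\rho$ is the root index of $\loga{X}$ at $\sigma$, and that $\widetilde{E}(\sigma)^o\to E(\sigma)^o$ is a $\mu_{\rho}$-torsor (recalled from \cite{BuNi}); consequently the quotient $\widetilde{E}(\sigma)^o/\gal=\widetilde{E}(\sigma)^o/\mu_{\rho}$ is canonically identified with $E(\sigma)^o$. Applying $(\cdot)/\gal$ to the formula of Theorem \ref{thm:vol2} and using $\Z[\LL]$-linearity to move the factors $(1-\LL)^{r_v(\sigma)}$ through the quotient map yields
$$\red(X)=\sum_{\sigma\in F(\loga{X})}(1-\LL)^{r_v(\sigma)}\,[\widetilde{E}(\sigma)^o/\gal]=\sum_{\sigma\in F(\loga{X})}(1-\LL)^{r_v(\sigma)}\,[E(\sigma)^o],$$
which is exactly the desired expression; independence of the chosen log model $\loga{X}$ is inherited from $\Vol_K$.

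I expect the only real subtlety to be the bookkeeping in the identification $\widetilde{E}(\sigma)^o/\gal\cong E(\sigma)^o$ --- in particular making sure the $\gal$-equivariance passes through an admissible quotient and that the torsor quotient is taken compatibly with the passage to a common multiple of the root indices --- but this is precisely the content already packaged in \cite[4.1.2]{BuNi}, so the verification is essentially formal once $\Vol_K$ and the quotient morphism are available. As an alternative, one could bypass $\Vol_K$ and prove well-definedness of the formula for $\red$ directly from Bittner's presentation and Weak Factorization, repeating the arguments of Propositions \ref{prop:modif}--\ref{prop:basech} with $E(\sigma)^o$ in place of $\widetilde{E}(\sigma)^o$ and discarding the Galois data; but composing with $(\cdot)/\gal$ is considerably shorter and reuses work already done.
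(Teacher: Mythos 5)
Your argument is correct, but it is not the route the paper's appendix takes as its primary proof: there, Proposition \ref{prop:motred2} is proved directly from Bittner's presentation and Weak Factorization by rerunning Propositions \ref{prop:modif}, \ref{prop:horiz}, \ref{prop:indepmod} and \ref{prop:blowup} with the strata $E(\sigma)^o$ in place of the covers $\widetilde{E}(\sigma)^o$ (simply discarding the Galois data), and the $\Z[\LL]$-linearity is obtained by the elementary observation that $\cX\times_R\P^1_R$ is an snc-model of $X\times_K\P^1_K$, whence $\red([X]\LL)=\red(X)\LL$ after subtracting $\red(X)$ from $\red(X\times_K\P^1_K)=\red(X)[\P^1_k]$. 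Your composition $\red=(\cdot)/\gal\circ\Vol_K$ is exactly what the paper mentions as the alternative deduction (and is the construction actually used in Proposition \ref{prop:red} of the main text), so it is a legitimate and shorter proof; what it buys is reuse of Theorem \ref{thm:vol2}, at the cost of importing Looijenga's equivariant quotient morphism (including its extension to non-algebraically-closed $k$) and of the identification $\widetilde{E}(\sigma)^o/\gal\cong E(\sigma)^o$, which rests on the $\mu_\rho$-torsor structure of the covers and the compatibility of \cite[4.1.2]{BuNi} under passage to common multiples of the root indices --- precisely the bookkeeping you flag. The direct route, by contrast, is self-contained within the appendix machinery and never needs the quotient map at all, which is why the paper prefers it there. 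Both arguments are sound; just be aware that the $\Z[\LL]$-linearity of $(\cdot)/\gal$ (equivalently, that quotients commute with products by trivially-acted-on affine spaces) is the one point your route genuinely leans on that the direct route avoids.
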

\begin{proof}
The proof is similar to that of Theorem \ref{thm:vol2}. Replacing the covers  $\widetilde{E}^o(\cdot)$ by the log strata $E^o(\cdot)$ in the proofs of Propositions \ref{prop:modif},  \ref{prop:horiz}, \ref{prop:indepmod} and \ref{prop:blowup}, one sees that our formula for $\red$ does not depend on the choice of $\loga{X}$, and that it satisfies Bittner's blow-up relations.
 To see that it is a morphism of $\Z[\LL]$-modules, it suffices to observe that $\red(X\times_K \mathbb{P}^1_K)=\red(X) [\mathbb{P}^1_k]$ for every smooth and proper $K$-scheme $X$, because for every snc-model $\cX$ of $X$, the product $\cX\times_R \mathbb{P}^1_R$ is an snc-model for $X\times_K \mathbb{P}^1_K$. Subtracting $\red(X)$ from both sides of the equality, we find that
 $\red([X]\LL)=\red(X)\LL$.
\end{proof}

 Alternatively, one can also deduce Proposition \ref{prop:motred2} from Theorem \ref{thm:vol2} by taking the quotient of $\Vol_K$ modulo $\gal$, as in the proof of Proposition \ref{prop:red}.
 If $\cX$ is an snc-model for $X$ and $\loga{X}$ is the log scheme associated with $(\cX,\cX_{k,\mathrm{red}})$, then the formula for $\red(X)$ in Proposition \ref{prop:motred2} is equivalent to formula \eqref{eq:redX} in Proposition \ref{prop:red}.

\providecommand{\arxiv}[1]{{\tt{arXiv:#1}}}

\end{document}